\theoremstyle{plain}
\newtheorem{thm}{Theorem}[section]
\newtheorem{theorem}[thm]{Theorem}
\newtheorem{corollary}[thm]{Corollary}
\newtheorem{lemma}[thm]{Lemma}
\newtheorem{prop}[thm]{Proposition}
\newtheorem{proposition}[thm]{Proposition}
\newtheorem{ques}[thm]{Question}
\newtheorem{fact}[thm]{Fact}
\newtheorem{obser}[thm]{Observation}
\newtheorem{conjecture}[thm]{Conjecture}
\theoremstyle{definition}
\newtheorem{de}[thm]{Definition}
\newtheorem{rem}[thm]{Remark}
\newtheorem{remark}[thm]{Remark}
\newtheorem{example}[thm]{Example}
\newtheorem{convention}[thm]{Convention}
\newtheorem{algorithm}[thm]{Algorithm}
\newcommand\slR{\mathchoice%
{\scalebox{1.0}[0.2]{\strut\kern-0.1ex\huge\raise 1em\hbox to 0.1ex{$/$\hss}}\mathbf{R}}
{\scalebox{1.0}[0.2]{\strut\kern-0.1ex\huge\raise 1em\hbox to 0.1ex{$/$\hss}}\mathbf{R}}
{\scalebox{1.0}[0.2]{\strut\kern-0.1ex\Large\raise 1em\hbox to 0.1ex{$/$\hss}}\mathbf{R}}
{\scalebox{1.0}[0.2]{\strut\kern-0.1ex\large\raise 0.8em\hbox to 0.1ex{$/$\hss}}\mathbf{R}}
}
\newcommand{\Z}{\mathbb{Z}}
\newcommand{\cz}[1]{{\color{red}{#1}\color{black}{}}}
\newcommand{\nie}[1]{{\color{blue}{#1}\color{black}{}}}
\newcommand{\id}{\mathrm{id}}
\newcommand{\aut}[1]{\mathrm{Aut}(#1)}
\newcommand{\dis}[1]{\mathrm{Dis}(#1)}
\newcommand{\lmlt}[1]{\mathrm{LMlt}(#1)}
\newcommand{\ld}{\backslash}
\newcommand{\Ker}{\mathop{\mathrm{Ker}}}
\newcommand{\Aut}{\mathop{\mathrm{Aut}}}
\newcommand{\GD}[1]{\mathcal{GD}(#1)}
\newcommand{\D}[1]{\mathcal{D}(#1)}
\newcommand{\LR}[1]{\mathcal{LR}(#1)}
\newcommand{\eqrel}[2]{\mathrel{\stackrel{\scriptstyle\eqref{#1}}{#2}}}
\numberwithin{equation}{section}
\begin{document}

\title{Indecomposable non-degenerate 2-permutational  solutions of the Yang-Baxter equation}

\author{P\v remysl Jedli\v cka}
\author{Agata Pilitowska}

\address{(P.J.) Department of Mathematics and Physics, Faculty of Engineering, Czech University of Life Sciences, Kam\'yck\'a 129, 16521 Praha 6, Czech Republic}
\address{(A.P.) Faculty of Mathematics and Information Science, Warsaw University of Technology, Koszykowa 75, 00-662 Warsaw, Poland}

\email{(P.J.) jedlickap@tf.czu.cz}
\email{(A.P.) A.Pilitowska@mini.pw.edu.pl}

\keywords{Yang-Baxter equation, set-theoretic solution, multipermutation solution, indecomposable solutions, subgroups of a semidirect product of groups.}
\subjclass[2020]{Primary: 16T25. Secondary: 20B35, 20E07.}

\date{\today}

\begin{abstract}
We present a complete characterization of all indecomposable non-degenerate, not necessarily involutive, solutions of the Yang-Baxter equation of multipermutation level~2.
We show that every such solution is a homomorphic image of a special, ``largest'' solution called \emph{the universal} one. On the other hand we prove that there is much simpler description.
At first, on the product of a group~$\Z_n^2$ and an abelian group~$G$,  we construct some family of indecomposable non-degenerate solutions of the Yang-Baxter equation of multipermutation level~2. Next, applying Rosenbaum's theorem of subgroups of a semidirect product and isolating a triple: a subgroup of $G$, a subgroup of $\Z_n^2$ and one group homomorphism, we obtain a~full description of each epimorphism which gives the desired solutions. Such a construction provides a tool how to find (and possibly enumerate) all indecomposable non-degenerate solutions of multipermutation level~$2$.  We also argue that the automorphism group of the discussed solutions is regular.
\end{abstract}

\maketitle
\section{Introduction}
The Yang-Baxter equation is a fundamental equation occurring in mathematical physics. It appears, for example, in integrable models in statistical mechanics, quantum field theory or Hopf algebras~(see e.g. \cite{Jimbo, K}). Searching for its solutions has been absorbing researchers for many years.

Let us recall that, for a vector space $V$, a {\em solution of the Yang--Baxter equation} is a linear mapping $r:V\otimes V\to V\otimes V$ 
 such that
\begin{align*}
(id\otimes r) (r\otimes id) (id\otimes r)=(r\otimes id) (id\otimes r) (r\otimes id).
\end{align*}

Description of all possible solutions seems to be extremely difficult and therefore
there were some simplifications introduced by Drinfeld in \cite{Dr90}.
Let  $X$ be a basis of the space $V$ and let $\sigma:X^2\to X$ and $\tau: X^2\to X$ be two mappings. We say that $(X,\sigma,\tau)$ is a {\em set-theoretic solution of the Yang--Baxter equation} if
the mapping 
$$x\otimes y \mapsto \sigma(x,y)\otimes \tau(x,y)$$ extends to a solution of the Yang--Baxter
equation. It means that $r\colon X^2\to X^2$, where $r=(\sigma,\tau)$,  
satisfies the \emph{braid relation}:
\begin{equation}\label{eq:braid}
(id\times r)(r\times id)(id\times r)=(r\times id)(id\times r)(r\times id).
\end{equation}

If $(X, \sigma,\tau)$ is a solution then directly by braid relation we obtain for $x,y,z\in X$:
\begin{align}
\sigma_x\sigma_y&=\sigma_{\sigma_x(y)}\sigma_{\tau_y(x)} \label{birack:1}\\
\tau_{\sigma_{\tau_y(x)}(z)}\sigma_x(y)&=\sigma_{\tau_{\sigma_y(z)}(x)}\tau_{z}(y) \label{birack:2}\\
\tau_x\tau_y&=\tau_{\tau_x(y)}\tau_{\sigma_y(x)} \label{birack:3}
\end{align}


A solution is called {\em non-degenerate} if the mappings $\sigma_x=\sigma(x,\_)$ and $\tau_y=\tau(\_\,,y)$ are bijections,
for all $x,y\in X$. We say that a solution is \emph{bijective} if the mapping $r$ is invertible. In particular, it is {\em involutive} if $r^2=\mathrm{id}_{X^2}$. A solution is
\emph{square free} if $r(x,x)=(x,x)$, for every $x\in X$.

All solutions $(X,\sigma,\tau)$, we study in this paper, are set-theoretic and non-degenerate, so we will call them simply \emph{solutions}. 
The set $X$ can be of arbitrary cardinality. 
\vskip 3mm
In the last years, the main interest of researchers lied in the investigation of involutive solutions. A special emphasis was taken onto two classess of solutions:  so-called {\em multipermutation} and \emph{indecomposable} ones. The former 
property resembles nilpotency and it is connected to the nilpotency of left braces (see e.g. \cite{GIC12, CJVV22}). The latter ones could be treated as ``bricks'' to construct solutions and knowledge of them  allows one to build more complex solutions. With the appearance of non-involutive solutions,  firstly studied by Soloviev \cite{Sol}, Lu, Yan and Zhu \cite{LYZ} and later by Guarnieri, Vendramin \cite{GV} and many others, Vendramin proposed in \cite[Problem 23]{V19} to study non-involutive multipermutation solutions.

In \cite[Section 3.2]{ESS} Etingof, Schedler and Soloviev introduced, for each involutive solution $(X,\sigma,\tau)$, the equivalence relation $\sim$ on the set $X$: for each $x,y\in X$
\[
x\sim y\quad \Leftrightarrow\quad \tau(\_\,,x)=\tau(\_\,,y).
\]
They showed that the quotient set $X/\mathord{\sim}$ can be again endowed
with a structure of a solution.

For the non-involutive case, one can define
the relation:
\begin{equation}
 x\approx y \quad \Leftrightarrow \quad  {\sigma_x=\sigma_y} \wedge {\tau_x=\tau_y}.
\end{equation}

Lebed and Vendramin showed in \cite{LV} that  the relation $\approx$ induces a solution on the quotient set $X^{\mathrel{\approx}}$ in injective solutions. 
In \cite{JPZ19} the authors together with Zamojska-Dzienio generalized the result for any solution $(X,\sigma,\tau)$.  A substantially shorter proof has recently appeared in \cite{CJKAV}. 
These results justifies the following definition.
\begin{de}\label{ret}
Let $(X,\sigma,\tau)$ be a solution. The quotient solution $\mathrm{Ret}(X,\sigma,\tau):=(X^{\approx},\sigma,\tau)$  with $\sigma_{x^{\approx}}(y^{\approx})=\sigma_x(y)^{\approx}$ and $\tau_{y^{\approx}}(x^{\approx})=\tau_y(x)^{\approx}$, for $x^{\approx},y^{\approx}\in X^{\approx}$  and $x\in x^{\approx},\; y\in y^{\approx}$, is called the \emph{retraction} solution of $(X,\sigma,\tau)$. One can define \emph{iterated retraction} in the following way: ${\rm Ret}^0(X,\sigma,\tau):=(X,\sigma,\tau)$ and
	${\rm Ret}^k(X,\sigma,\tau):={\rm Ret}({\rm Ret}^{k-1}(X,\sigma,\tau))$, for any natural number $k>1$.
If there exists an integer~$k$ such that $\mathrm{Ret}^k(X,\sigma,\tau)$ has one element only then we say that
$(X,\sigma,\tau)$ has {\em multipermutation level $k$}.
\end{de}

In particular, a solution~$(X,\sigma,\tau)$ is said to be a {\em permutation} solution (or {\em multipermutation solution of level}~$1$) if $|\mathrm{Ret}(X,\sigma,\tau)|=1$. 
A non-permutation solution~$(X,\sigma,\tau)$ is said to be a
{\em multipermutation solution of level}~$2$ (or briefly, $2$-\emph{permutational} solution), if $|\mathrm{Ret}(X,\sigma,\tau)/\mathord{\approx}|=1$.

Gateva-Ivanova studied involutive multipermutation  solutions in many papers. For example, in \cite{GIC12} together with Cameron they gave an equational characterization of square-free involutive solutions of multipermutation level at most $k$. Later on, she investigated in \cite{GI18} in more details  square-free involutive solutions of multipermutation level at most $2$. 

In the paper we pay special attention to non-degenerate, not necesserly involutive, 
solutions with a low level of multipermutation.
Note that the study of such solutions attracts, among others, researchers interested in the quantum spaces (see e.g. \cite{GIM11}).  
In \cite{JP23a} we proved that solutions of multipermutation level at most $2$ happen to fall into two special classes: so-called $2$-\emph{reductive} ones and strictly $2$-permutational. 
We gave a combinatorial
construction of $2$-reductive 
solutions and we also proved that solutions associated to a skew left braces are 2-reductive if and only if the skew left brace is nilpotent of class~$2$. In \cite{HJP24} we, together with J. Hora, showed that each solution of multipermutation level at most $2$ may be constructed from $2$-reductive one using some bijections.

Moreover, we focus also on indecomposable solutions. A solution $(X,\sigma,\tau)$ is \emph{indecomposable} if the permutation group  $\mathcal{G}(X)=\left\langle \sigma_x, \tau_x\colon x\in X\right\rangle$  acts transitively on $X$.
The investigation of indecomposable solutions was initiated  by Etingof et. al. in \cite{ESS}. They showed there that such solutions of prime cardinality are permutation solutions with cyclic permutation group. In \cite{JPZ22} the authors of this paper together with Zamojska-Dzienio gave a complete system of three invariants for finite non-isomorphic  indecomposable involutive solutions with cyclic permutation groups (cocyclic solutions). Smoktunowicz and Smoktunowicz presented in \cite{SS18} a construction of all finite indecomposable solutions based on one-generated left braces. Also Rump in \cite{Rump20} gave another method of constructing indecomposable solutions from left braces. In \cite{JP23} we give a complete characterization of all indecomposable involutive solutions of multipermutation level~2. Dietzel, Properzi, and Trappeniers 
gave in \cite{DPT25} a full classification of indecomposable involutive solutions which are of size $p^2$, for a prime $p$. Cedó and Okniński characterized in \cite{CO21} so called {\em simple} solutions (solutions with only trivial congruences). In particular, they proved that each finite simple solution (not of a prime order) is indecomposable. 
Colazzo, Jespers, Kubat, and Van Antwerpen presented in \cite{CJKV23} a characterisation of simple finite 
non-degenerate solutions in terms of the algebraic structure of the associated permutation skew left braces.   

In \cite{GIC12} Gateva-Ivanova and Cameron showed that square-free involutive solutions (of arbitrary cardinality)
with finite multipermutation level are decomposable. Additionally, Ced\'{o} and Okni\'{n}ski proved in \cite{CO21} that
indecomposable involutive solutions of square-free cardinality are always multipermutation solutions. Castelli, Mazzotta and Stefanelli presented an example \cite[Example 3.9]{CMS} of 3-elements indecomposable square-free solution which is not multipermutation. They also extended the result of Gateva-Ivanova and Cameron to non-involutive solutions and they proved in \cite{CMS} that finite non-degenerate multipermutation square-free solutions are decomposable. In \cite{JP24} we generalized their results and we  showed that each  non-degenerate square-free solution 
of multipermutation level $k$ and 
arbitrary cardinality was always decomposable. Just recently, Castelli discribed the precise relationship between indecomposable solutions and finite one-generator skew braces \cite{C25}.

Unfortunately, it is not easy to find, in general, all the indecomposable solutions. Hence we decided
to focus on a smaller class of indecomposable solutions --
 multipermutation solutions of level $2$.

In this paper we present a complete characterization of all such solutions.
We show that each of them is a homomorphic image of a particular solution defined on a product of a group $\Z_n^2$ and an abelian group $G$. Each epimorphism which gives the desired solutions is defined by a triple: a subgroup $H$ of $G$, a subgroup $S$ of $\Z_n^2$ and one group homomorphism $\Theta\colon S\to G/H$. Such construction provides a tool how to find (and possibly enumerate) all indecomposable solutions of multipermutation level~$2$. The paper is a significant extension of the results obtained in \cite{JP23} for involutive solutions. But contrary to the previous constructions, now our results are fully based on group theory, in particular on Rosenbaum's theorem of subgroups of a semidirect product.

The paper is organized as follows. 
In Section 2 we recall some known facts concerning solutions and we
present the definitions of important groups that act on a~solution with special emphasis to diagonal mappings of a solution. In Section 3 we present a~special kind of a notion used in quasigroup theory: isotopy. Section 4 is devoted to general properties of indecomposable  solutions of multipermutation level 2. In particular we show that the automorphism group of the discussed solutions is regular (Proposition \ref{prop:aut-regular}). Section \ref{sec:constr} contains a construction of a special class of indecomposable solutions of multipermutation level 2 (Theorem \ref{th:main2}) which is crucial for the paper. In Section 6 we introduce so called \emph{universal} and \emph{canonical} solutions. We 
study homomorphic images of the solutions we have constructed in the previous section (Propositions \ref{prop:hom-img} and \ref{prop:img-universal}). We prove that
every indecomposable solution of multipermutation level~$2$ can be obtained as such an image. In the last Section 7 we describe all epimoprhic images of canonical solutions constructed in Section 6. The main result, Theorem \ref{thm:import} gives the full characterization of all indecomposable solutions of multipermutation level~$2$. Finally, we present how to find all such solutions.

\vskip 5mm

\section{Preliminaries}

In this section we recall some known facts and we
present the definitions of some important permutation groups that act on a~solution.

\begin{de}
    Let $(X,\sigma,\tau)$ be a solution. We define
    \[
    U_X(x)=\sigma^{-1}_x(x),\qquad T_X(x)=\tau_{x}^{-1}(x),
    \]
    for all~$x\in X$. Both the mappings are commuting permutations~\cite{Rump19}, \cite{JP24}. If the solution
    is clear from the context, we drop the indices, writing $U(x)$ or $T(x)$ only.
\end{de}

Note that by \cite[Lemma 3.2]{JP24}, for any $x\in X$, 
\begin{align}
\sigma_{T^{-1}(x)}=\sigma_{U(x)}\;  {\rm and}\;  \tau_{U^{-1}(x)}=\tau_{T(x)}. \label{eq:LU}
\end{align}
This implies, for each $x\in X$
\begin{align}\label{eq:sigma_UT}
    \sigma_{UT(x)}=\sigma_x\quad {\rm and}\quad \tau_{UT(x)}=\tau_x.
\end{align}

Let us now discuss permutation groups that act on a non-degenerate solution.
Decades ago, when people were studying involutive solutions only, the notation $\mathcal{G}(X)$ was used for denoting the group
$\langle \sigma_x:x\in X\rangle$. For non-involutive solution the permutations $\sigma_x$ do not suffice for describing the structure of a solution and, as a consequence, two approaches emerged: one consists of taking a group of pairs, either of type $(\sigma_x,\hat\sigma_x)$ (\cite{B18}) 
or of type $(\sigma_x,\tau^{-1}_x)$ (\cite{CJKAV}). The other approach uses the following definition:

\begin{de}
The permutation group $\mathcal{G}(X)$ 
of the solution $(X,\sigma,\tau)$ is a subgroup of $S(X)$ defined in the following way:
$$\mathcal{G}(X)=\langle\sigma_x,\tau_x:x\in X\rangle.$$
\end{de}

We use this definition because such a group is easier to handle, although it is not a generalization of the group $\mathcal{G}(X)$, as defined for involutive groups, as we shall see on the following example:

\begin{example} [with Vicent Pérez-Calabuig]\strut\\
    Let us consider a set $X=\{0,1,2,3\}$ with the following operations:
    \begin{align*}
    \sigma_0&=(0,2) & \tau_0&=(0,1)\\
    \sigma_1&=(0,1,2,3) & \tau_1&=(0,3,1,2)\\
    \sigma_2&=(3,2,1,0) & \tau_2&=(0,2,1,3)\\
    \sigma_3&=(1,3) & \tau_3&=(2,3)
    \end{align*}
    Then $(X,\sigma,\tau)$ is an involutive solution where the group $\langle\sigma_x\rangle$ has only 8 elements, whereas the group $\langle\sigma_x,\tau_x\rangle$ is the symmetric group on~$X$. 
    The groups $\langle \sigma_x\rangle$
    and $\langle\tau_x\rangle$ are thus not identical but they are conjugated via the
    mapping $T=(0,1,3,2)$.
\end{example}

\begin{lemma}\label{lem:inv}
    A solution~$(X,\sigma,\tau)$ is involutive if and only if $U=T^{-1}$ and $(\sigma_x U)^{-1}=\tau_x T$, for all $x\in X$.
\end{lemma}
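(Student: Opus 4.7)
The plan is to unpack $r^2=\id$ into the two pointwise identities
\begin{align*}
\mathrm{(a)}\quad & \sigma_{\sigma_x(y)}(\tau_y(x))=x,\\
\mathrm{(b)}\quad & \tau_{\tau_y(x)}(\sigma_x(y))=y,
\end{align*}
and to work with these alongside the braid relation \eqref{birack:1}, $\sigma_x\sigma_y=\sigma_{\sigma_x(y)}\sigma_{\tau_y(x)}$. A convenient reformulation is that, once $U=T^{-1}$, the identity $(\sigma_xU)^{-1}=\tau_xT$ becomes $\tau_x=U^{-1}\sigma_x^{-1}U$; so the lemma asserts that involutivity amounts to $U$ and $T$ being mutually inverse together with each $\tau_x$ being the $U$-conjugate of $\sigma_x^{-1}$.

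For the forward direction I would first substitute $y=U(x)$ into (a); since $\sigma_x(U(x))=x$ this collapses to $\tau_{U(x)}(x)=U(x)$, which says $TU=\id$ and hence $U=T^{-1}$. Next, to obtain $\sigma_xU\tau_x=U$, I plan to evaluate $\sigma_x\bigl(U(\tau_x(w))\bigr)$ by writing $U(\tau_x(w))=\sigma_{\tau_x(w)}^{-1}(\tau_x(w))$, applying \eqref{birack:1} to the pair $(w,x)$ in the inverted form $\sigma_{\tau_x(w)}^{-1}=\sigma_x^{-1}\sigma_w^{-1}\sigma_{\sigma_w(x)}$, and using (a) in the form $\sigma_{\sigma_w(x)}(\tau_x(w))=w$. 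The $\sigma_{\sigma_w(x)}$ factor then collapses and the remaining cancellation yields $\sigma_xU\tau_x(w)=U(w)$. Rewriting with $U^{-1}=T$ produces the claimed $(\sigma_xU)^{-1}=\tau_xT$.

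For the converse, assume $U=T^{-1}$ and $\tau_x=U^{-1}\sigma_x^{-1}U$. To prove (a) I substitute the assumed formula in the shape $\tau_y(x)=U^{-1}\sigma_y^{-1}\sigma_x^{-1}(x)$, use the inverse of \eqref{birack:1}, namely $\sigma_y^{-1}\sigma_x^{-1}=\sigma_{\tau_y(x)}^{-1}\sigma_{\sigma_x(y)}^{-1}$, and, after recognising $U(\tau_y(x))=\sigma_{\tau_y(x)}^{-1}(\tau_y(x))$, cancel the common factor $\sigma_{\tau_y(x)}^{-1}$ to read off $\sigma_{\sigma_x(y)}(\tau_y(x))=x$. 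For (b), setting $a=\tau_y(x)$ and $b=\sigma_x(y)$, the expression $\tau_a(b)=U^{-1}\sigma_a^{-1}\sigma_b^{-1}(b)$ simplifies via the same inverted form $\sigma_a^{-1}\sigma_b^{-1}=\sigma_y^{-1}\sigma_x^{-1}$, yielding $U^{-1}\sigma_y^{-1}(y)=U^{-1}U(y)=y$. The delicate point is that the single conjugation identity has to supply both scalar conditions of $r^2=\id$, but the built-in symmetry of \eqref{birack:1} between the pairs $(x,y)$ and $(b,a)$ does exactly this, so no further hypothesis is required.
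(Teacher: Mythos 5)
Your proof is correct. For the backward implication your computation is essentially the paper's own: you apply $U$ to $\tau_y(x)$, rewrite it as $\sigma_{\tau_y(x)}^{-1}(\tau_y(x))$, invert the braid identity \eqref{birack:1} to factor out $\sigma_{\tau_y(x)}^{-1}$, and cancel to obtain $\tau_y(x)=\sigma_{\sigma_x(y)}^{-1}(x)$ — exactly the chain of equalities in the paper (with the roles of $x$ and $y$ swapped). You then additionally verify the second component $\tau_{\tau_y(x)}(\sigma_x(y))=y$ explicitly; the paper stops at the first component and invokes the standard fact that this single identity already characterizes involutivity of a non-degenerate solution, so your extra step is harmless redundancy rather than a necessity. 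The real difference is in the forward implication: the paper simply cites \cite[Proposition 2.1]{ESS}, whereas you give a self-contained derivation — substituting $y=U(x)$ into $\sigma_{\sigma_x(y)}(\tau_y(x))=x$ to get $TU=\id$, and then combining the inverted braid relation $\sigma_{\tau_x(w)}^{-1}=\sigma_x^{-1}\sigma_w^{-1}\sigma_{\sigma_w(x)}$ with $\sigma_{\sigma_w(x)}(\tau_x(w))=w$ to prove $\sigma_xU\tau_x=U$. Both steps check out, so your version buys independence from the external reference at the cost of a slightly longer argument.
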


\begin{proof}
    The forward direction was proved in \cite[Proposition 2.1]{ESS}. Suppose hence $U=T^{-1}$ and $(\sigma_x U)^{-1}=\tau_x T$, for all $x\in X$.
    Then, for all $x,y\in X$,
    \[
        \sigma_{\tau_x(y)}^{-1}\tau_x(y)=U\tau_x(y) = \sigma_x^{-1}T^{-1}(y)=
        \sigma_x^{-1}U(y)
        =\sigma_x^{-1}\sigma_y^{-1}(y)
        \stackrel{\eqref{birack:1}}=
        \sigma^{-1}_{\tau_x(y)}\sigma^{-1}_{\sigma_y(x)}(y)
  \]
    and hence $\tau_x(y)=\sigma^{-1}_{\sigma_y(x)}(y)$, proving involutivity.
\end{proof}

Inspired by this result, we define other permutation groups acting on the set $X$:
\begin{de}
Let $(X,\sigma,\tau)$ be a solution. We define
\begin{align*}
    \D{X}&=\langle U,T\rangle,\\
    \GD{X}&=\langle\sigma_x,\tau_x,U,T:x\in X\rangle,\\
    \mathcal{LR}(X)&=\langle \sigma_xU,\tau_xT: x\in X\rangle,\\
    \dis{X}&=\langle \sigma_x\sigma_y^{-1},\tau_x\tau_y^{-1}:x,y\in X\rangle.
\end{align*}
\end{de}
The group Dis$(X)$ is called in literature \emph{displacement group of the solution} and it also plays important role in the theory of racks and quandles.
Clearly $\dis{x}\leq\LR{X}\leq\GD{X}$.

\begin{corollary}
    Let $(X,\sigma,\tau)$ be an involutive solution. Then $\mathcal{G}(X)=\langle\sigma_x,\tau_x:x\in X\rangle$ is a normal subgroup of~$\GD{X}$.
\end{corollary}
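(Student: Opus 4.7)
The plan is to invoke Lemma~\ref{lem:inv}. Since $(X,\sigma,\tau)$ is involutive, that lemma gives $U=T^{-1}$ and $(\sigma_xU)^{-1}=\tau_xT$; substituting $U^{-1}=T$ rearranges the second equation to the commutation rule
\[
T\sigma_xT^{-1}=\tau_x^{-1}\qquad(x\in X),
\]
equivalently $\tau_x=T\sigma_x^{-1}T^{-1}$, and hence also $T^{-1}\tau_xT=\sigma_x^{-1}$. Because $U=T^{-1}$, one has $\GD{X}=\langle\mathcal{G}(X),T\rangle$, so normality of $\mathcal{G}(X)$ in $\GD{X}$ reduces to showing that conjugation by $T^{\pm 1}$ sends each of the generators $\sigma_x,\tau_x$ into $\mathcal{G}(X)$.

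The derived identities immediately take care of half of this: $T\sigma_xT^{-1}=\tau_x^{-1}\in\mathcal{G}(X)$ and $T^{-1}\tau_xT=\sigma_x^{-1}\in\mathcal{G}(X)$. The remaining inclusion $T\tau_xT^{-1}\in\mathcal{G}(X)$ (whose inverse handles $T^{-1}\sigma_xT\in\mathcal{G}(X)$) is the main obstacle. My plan for it is: substitute $\tau_x=T\sigma_x^{-1}T^{-1}$ to obtain $T\tau_xT^{-1}=T^2\sigma_x^{-1}T^{-2}$, so that the task reduces to showing $T^2$ normalises $\langle\sigma_z:z\in X\rangle\le\mathcal{G}(X)$. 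To do this I intend to apply the commutation rule a second time, combining it with the braid relations~(\ref{birack:1})--(\ref{birack:3}) and the involutive identity $\tau_y(x)=\sigma_{\sigma_x(y)}^{-1}(x)$ extracted from the proof of Lemma~\ref{lem:inv}, in order to re-express $T^2\sigma_xT^{-2}$ as a word in the $\sigma_z$'s inside $\mathcal{G}(X)$. Once this is established, $T\mathcal{G}(X)T^{-1}=\mathcal{G}(X)$, and the corollary follows.
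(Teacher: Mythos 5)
Your reduction is set up correctly: Lemma~\ref{lem:inv} does give $\tau_x=T\sigma_x^{-1}T^{-1}$, hence $T\sigma_xT^{-1}=\tau_x^{-1}\in\mathcal{G}(X)$ and $T^{-1}\tau_xT=\sigma_x^{-1}\in\mathcal{G}(X)$, and since $U=T^{-1}$ one indeed has $\GD{X}=\langle\mathcal{G}(X),T\rangle$, so everything hinges on the remaining conjugates $T\tau_xT^{-1}=T^2\sigma_x^{-1}T^{-2}$ and $T^{-1}\sigma_xT=T^{-2}\tau_x^{-1}T^2$. But precisely there the argument stops being a proof. ``Applying the commutation rule a second time'' is circular: the single relation you have extracted trades one conjugation by $T$ for the swap $\sigma\leftrightarrow\tau^{-1}$, so it merely converts $T^2\sigma_x^{-1}T^{-2}$ back into $T\tau_xT^{-1}$ and walks you along the chain of subgroups $A_n=T^n\langle\sigma_x:x\in X\rangle T^{-n}$ without ever returning to $\mathcal{G}(X)=\langle A_0,A_1\rangle$. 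The braid identities \eqref{birack:1}--\eqref{birack:3} and the pointwise identity $\tau_x(y)=\sigma_{\sigma_y(x)}^{-1}(y)$ relate $\sigma$'s and $\tau$'s with \emph{transformed subscripts}; none of them involves $T$ as a permutation, and you give no concrete computation turning them into the required membership $T^2\sigma_xT^{-2}\in\mathcal{G}(X)$. As written, ``$T^2$ normalises $\langle\sigma_z:z\in X\rangle$'' is an unproved restatement of the corollary, not a step towards it.

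That this cannot be waved away is visible already at the level of abstract groups: in the free group on $\{a,t\}$ the subgroup $H=\langle a,\;ta^{-1}t^{-1}\rangle$ satisfies $tat^{-1}=(ta^{-1}t^{-1})^{-1}\in H$ and $t^{-1}(ta^{-1}t^{-1})t=a^{-1}\in H$, yet $t(ta^{-1}t^{-1})t^{-1}=t^2a^{-1}t^{-2}\notin H$. So the relations you actually use are provably insufficient, and some genuinely solution-theoretic input is indispensable --- for instance an identity forcing $T^2$ to intertwine the assignment $x\mapsto\sigma_x$ (in the paper's four-element example one can check $T^2\sigma_xT^{-2}=\sigma_{T^2(x)}$, but you would have to prove such a statement in general). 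A secondary imprecision: the parenthetical ``whose inverse handles $T^{-1}\sigma_xT$'' is not right, since $(T\tau_xT^{-1})^{-1}=T\tau_x^{-1}T^{-1}$, not $T^{-1}\sigma_xT$; the two outstanding conjugates sit at levels $+2$ and $-1$ of the chain $A_n$, and for infinite $X$ you need the two-sided equality $T^2\langle\sigma_x:x\in X\rangle T^{-2}=\langle\sigma_x:x\in X\rangle$ before one disposes of the other. Note finally that the paper states this corollary without proof, so there is no argument there to borrow; the burden of producing the missing identity is entirely on you.
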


\begin{lemma}\label{lem:isom-LR-dis}
    Let $(X,\sigma,\tau)$ be a solution and let~$e\in X$. If $\LR{X}$ is abelian then
    $\LR{X}/\LR{X}_e\cong \dis{X}/\dis{X}_e$.    
\end{lemma}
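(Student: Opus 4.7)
The plan is to apply the second isomorphism theorem to the inclusion $\dis{X}\leq \LR{X}$. First, observe that $\dis{X}\leq \LR{X}$ already, since $\sigma_x\sigma_y^{-1}=(\sigma_xU)(\sigma_yU)^{-1}$ and $\tau_x\tau_y^{-1}=(\tau_xT)(\tau_yT)^{-1}$ hold purely formally (the $U$'s and $T$'s cancel without using commutativity). Because $\LR{X}$ is abelian, $\LR{X}_e$ is a normal subgroup and the second isomorphism theorem will give
\[
\LR{X}/\LR{X}_e \;=\; \dis{X}\,\LR{X}_e/\LR{X}_e \;\cong\; \dis{X}/\bigl(\dis{X}\cap\LR{X}_e\bigr)\;=\;\dis{X}/\dis{X}_e,
\]
the last equality being just the definition of the stabilizer inside $\dis{X}$. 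So the whole proof reduces to showing the product identity $\LR{X}=\dis{X}\cdot\LR{X}_e$.

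The key observation for this product decomposition is that each of the ``basepoint'' generators lies in the stabilizer of $e$: indeed
\[
(\sigma_eU)(e)=\sigma_e\bigl(\sigma_e^{-1}(e)\bigr)=e,\qquad (\tau_eT)(e)=\tau_e\bigl(\tau_e^{-1}(e)\bigr)=e,
\]
so $\sigma_eU,\tau_eT\in \LR{X}_e$. Consequently every generator of $\LR{X}$ splits as a displacement times a stabilizer:
\[
\sigma_xU=(\sigma_x\sigma_e^{-1})\cdot(\sigma_eU),\qquad \tau_xT=(\tau_x\tau_e^{-1})\cdot(\tau_eT),
\]
and by inverting, $(\sigma_xU)^{-1}=(\sigma_eU)^{-1}\cdot(\sigma_e\sigma_x^{-1})$, likewise for the $\tau$-generator. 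Thus every generator of $\LR{X}$ and its inverse belong to the set $\dis{X}\cdot\LR{X}_e$.

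Finally, using that $\LR{X}$ is abelian, any finite product of such generators can be rearranged so that all displacement factors are collected on the left and all stabilizer factors on the right, giving $\LR{X}\subseteq\dis{X}\cdot\LR{X}_e$; the reverse inclusion is trivial since both factors lie in $\LR{X}$. This establishes $\LR{X}=\dis{X}\cdot\LR{X}_e$, and the second isomorphism theorem then yields the claim. The only delicate step is the product decomposition of $\LR{X}$; once one spots that $\sigma_eU$ and $\tau_eT$ stabilize $e$, the rest is bookkeeping enabled by commutativity.
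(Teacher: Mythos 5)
Your proof is correct and follows essentially the same route as the paper: both arguments hinge on observing that $\sigma_eU$ and $\tau_eT$ fix $e$, deducing the factorization $\LR{X}=\dis{X}\cdot\LR{X}_e$, and then invoking the second isomorphism theorem (with abelianness making all normality and intersection issues trivial). You merely spell out the generator bookkeeping that the paper leaves implicit.
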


\begin{proof}
    Observe that $\LR{X}=\langle \dis{X}\cup\{\sigma_eU,\tau_eT\}\rangle$,
    $\sigma_eU(e)=\sigma_e\sigma_e^{-1}(e)=e$
    and $\tau_eT(e)=e$. Hence $\LR{X}=\dis{X}\LR{X}_e$.
    The claim is then obtained from the 2nd 
    Isomorphism Theorem.
\end{proof}

Now we recall several notions from~\cite{JP23a} describing certain solutions of multipermutation level~$2$.

\begin{de}[\cite{JP23a}]
A solution $(X,\sigma, \tau)$ is $2$-\emph{reductive} if, for every $x,y\in X$:
\begin{align}
& \sigma_{\sigma_x(y)}=\sigma_y, \label{eq:red1}\\
& \tau_{\tau_x(y)}=\tau_y, \label{eq:red2}\\
& \sigma_{\tau_x(y)}=\sigma_y, \label{eq:red3}\\
& \tau_{\sigma_x(y)}=\tau_y. \label{eq:red4}
\end{align}
\end{de}

We can alternatively say that a solution is $2$-reductive if and only if its reduct is trivial~\cite[Corollary 3.21]{JP23a}.
The structure of $2$-reductive solution is rather combinatorial. Throughout all the paper, a $2$-reductive solution will be denoted by $(X,L,\mathbf{R})$ rather than $(X,\sigma,\tau)$.

\begin{prop}[\cite{JP23a}]
    Let $(X,L,\mathbf{R})$ be a $2$-reductive solution. Then the group $\mathcal{G}(X)$ is abelian.
\end{prop}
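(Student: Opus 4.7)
The plan is to show that all the generators $\sigma_x$ and $\tau_x$ of $\mathcal{G}(X)$ pairwise commute; once this is established, the group they generate is automatically abelian. The only tools I need are the three braid relations \eqref{birack:1}, \eqref{birack:2}, \eqref{birack:3} and the four $2$-reductivity identities \eqref{eq:red1}--\eqref{eq:red4}, which say precisely that $\sigma$ and $\tau$ are ``constant'' on the images of $\sigma$ and $\tau$.

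First I would handle the $\sigma$--$\sigma$ case. Starting from \eqref{birack:1}, $\sigma_x\sigma_y=\sigma_{\sigma_x(y)}\sigma_{\tau_y(x)}$, I apply \eqref{eq:red1} to the first factor on the right, giving $\sigma_{\sigma_x(y)}=\sigma_y$, and \eqref{eq:red3} to the second factor, giving $\sigma_{\tau_y(x)}=\sigma_x$. This immediately yields $\sigma_x\sigma_y=\sigma_y\sigma_x$ for all $x,y\in X$. An entirely symmetric calculation, applying \eqref{eq:red2} and \eqref{eq:red4} to the right-hand side of \eqref{birack:3}, gives $\tau_x\tau_y=\tau_y\tau_x$.

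Next I would derive the mixed commutation $\sigma_x\tau_z=\tau_z\sigma_x$ from \eqref{birack:2}. On the left-hand side of
\begin{equation*}
\tau_{\sigma_{\tau_y(x)}(z)}\,\sigma_x(y)\;=\;\sigma_{\tau_{\sigma_y(z)}(x)}\,\tau_z(y),
\end{equation*}
the subscript of $\tau$ is obtained by applying $\sigma$ to $z$, so \eqref{eq:red4} collapses it to $\tau_z$; on the right-hand side the subscript of $\sigma$ is obtained by applying $\tau$ to $x$, so \eqref{eq:red3} collapses it to $\sigma_x$. The identity therefore becomes $\tau_z\sigma_x(y)=\sigma_x\tau_z(y)$ for every $y\in X$, i.e.\ $\tau_z\sigma_x=\sigma_x\tau_z$.

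Putting the three commutation relations together, every pair of generators of $\mathcal{G}(X)=\langle\sigma_x,\tau_x:x\in X\rangle$ commutes, so $\mathcal{G}(X)$ is abelian. I do not expect a genuine obstacle here; the only mild care needed is to keep the four $2$-reductivity identities straight and apply the correct one (the subscript is always of the form $\sigma_?(?)$ or $\tau_?(?)$, and the reductivity law to invoke is determined by the outer letter).
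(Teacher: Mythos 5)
Your proof is correct, and it is the natural direct argument: each of the three braid relations \eqref{birack:1}--\eqref{birack:3}, after collapsing the decorated subscripts via the appropriate $2$-reductivity identities \eqref{eq:red1}--\eqref{eq:red4}, yields exactly one of the three commutation relations $\sigma_x\sigma_y=\sigma_y\sigma_x$, $\tau_x\tau_y=\tau_y\tau_x$, $\sigma_x\tau_z=\tau_z\sigma_x$. The paper only cites this proposition from an earlier work without reproducing a proof, so there is nothing to compare against here; your self-contained verification is complete and each application of \eqref{eq:red1}--\eqref{eq:red4} is used with the correct outer letter.
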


\begin{theorem}\label{thm:affmesh}
Let $I$ be a non-empty set, $(A_i)_{i\in I}$ be a family of abelian groups over $I$, $\bigcup_{i\in I} A_i$ be the disjoint union of the sets $A_i$, $c_{i,j},d_{i,j}\in A_j$, for $i,j\in I$, be some constants. Then $(\bigcup_{i\in I} A_i,L,\mathbf{R}_x)$, where  for $x\in A_i$, $y\in A_j$,
\begin{equation}\label{eq:7.8}
L_x(y)=y+c_{i,j}\quad {\rm and} \quad \mathbf{R}_x(y)= x+d_{i,j},
\end{equation}
is a $2$-reductive solution. The solution is square-free if and only if $c_{i,i}=d_{i,i}=0$, for all $i\in I$.
\end{theorem}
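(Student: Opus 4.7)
The plan is to verify directly the four requirements: non-degeneracy, the four 2-reductivity identities \eqref{eq:red1}--\eqref{eq:red4}, the three braid relations \eqref{birack:1}--\eqref{birack:3}, and the square-free criterion. For non-degeneracy, observe that for each fixed $x \in A_i$ the permutation $L_x$ restricts on each $A_j$ to the translation $y \mapsto y + c_{i,j}$, which is a bijection of the abelian group $A_j$; the argument for the $\mathbf{R}$-side is analogous, reading the formula so that the shift takes place within the appropriate component of the element being acted on.

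For the reductivity identities, the key observation is that both $L_x$ and $\mathbf{R}_x$ depend on $x$ only through the index $i$ of the component containing $x$, and moreover that $L_x(y)$ stays in the component $A_j$ of $y$, while the $\mathbf{R}$-action preserves the corresponding component. Hence $L_{L_x(y)}$ is parameterized by an element of $A_j$ and therefore equals $L_y$; the remaining three identities follow by the same component-tracking argument.

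Once 2-reductivity is established, each of the braid relations collapses, after substitution, to a statement that two translations of a single component $A_k$ commute, which holds because $A_k$ is abelian. For instance, \eqref{birack:1} reduces to $L_x L_y = L_y L_x$, and on $z \in A_k$ both sides equal $z + c_{i,k} + c_{j,k}$ by commutativity. For square-freeness, the condition $r(x,x)=(x,x)$ on $x \in A_i$ translates to $x + c_{i,i} = x$ and $x + d_{i,i} = x$, equivalent to $c_{i,i} = d_{i,i} = 0$. The only mildly nontrivial point is the index bookkeeping in \eqref{birack:2}, where one must trace through nested compositions of $\sigma$ and $\tau$ to see that both sides end up as sums of the same two constants inside the same $A_k$; once that is verified, abelianness closes the argument.
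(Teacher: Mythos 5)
Your proof is correct, and it is the standard direct verification; the paper itself states Theorem~\ref{thm:affmesh} without proof (importing it from \cite{JP23a}), so there is nothing for your argument to diverge from: component-preservation yields \eqref{eq:red1}--\eqref{eq:red4}, after which \eqref{birack:1}--\eqref{birack:3} collapse to commutativity of two translations of a single $A_j$, exactly as you say. Two minor notes: the printed formula $\mathbf{R}_x(y)=x+d_{i,j}$ is a typo for $y+d_{i,j}$ (as its use in the proof of Theorem~\ref{th:main2} confirms), which you correctly read around; and you tacitly invoke the standard fact that for non-degenerate $\sigma_x,\tau_y$ the identities \eqref{birack:1}--\eqref{birack:3} are equivalent to the braid relation \eqref{eq:braid}, which is also how the paper uses them.
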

\noindent
If we assume that 
\begin{align}\label{a:mesh}
A_j=\left<\{c_{i,j},d_{i,j}\mid i\in I\}\right>,\; {\rm for \;every} \;j\in I,
\end{align}
 then the solution has orbits of the action of $\langle\{L_x,\mathbf{R}_x\mid x\in X\}\rangle$ equal to $A_j$, $j\in I$ and each orbit is a permutational solution. 

We will denote the solution satisfying \eqref{a:mesh} by $\mathcal A=((A_i)_{i\in I},\,(c_{i,j})_{i,j\in I},\,(d_{i,j})_{i,j\in I})$ and call it \emph{the disjoint union, over a set $I$, of abelian groups}.

\begin{theorem}\label{th:2red}
A solution $(X,\sigma,\tau)$ is $2$-reductive if and only if it is a disjoint union, over a set $I$, of abelian groups. The orbits of the action of $\mathcal{G}(X)$ 
coincide with the groups.
\end{theorem}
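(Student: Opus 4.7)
The backward direction is exactly what the $2$-reductivity part of Theorem~\ref{thm:affmesh} asserts; the coincidence of orbits of $\mathcal{G}(X)$ with the $A_j$ is already contained in the discussion following that theorem, given condition \eqref{a:mesh}. So the content of the theorem lies in the forward direction, and my plan is to reconstruct the mesh data directly from the $\mathcal{G}(X)$-orbits.

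Assume $(X,\sigma,\tau)$ is $2$-reductive. I would first observe that equations \eqref{eq:red1}--\eqref{eq:red4} say precisely that every generator $\sigma_x$ or $\tau_x$ of $\mathcal{G}(X)$, when applied to $y$, leaves the pair $(\sigma_y,\tau_y)$ unchanged; a short induction on word length then extends this to every element of $\mathcal{G}(X)$. Hence $(\sigma_y,\tau_y)$ is constant on each $\mathcal{G}(X)$-orbit of $X$, so once I let $\{A_i\}_{i\in I}$ be this family of orbits I may write $\sigma_y,\tau_y$ as functions of the orbit index of $y$ alone.

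Next I would place an abelian group structure on each orbit. The cited proposition gives that $\mathcal{G}(X)$ is abelian, and since its action on $A_j$ is abelian and transitive, all point-stabilizers coincide with the kernel of that action, so the action factors through a regular abelian action of a quotient of $\mathcal{G}(X)$. Choosing a base point $e_j\in A_j$ transports that group structure to $A_j$ itself, and every $g\in\mathcal{G}(X)$ then acts on $A_j$ as translation by $g(e_j)$. In particular, for $x\in A_i$ the restrictions $\sigma_x|_{A_j}$ and $\tau_x|_{A_j}$ depend only on the orbit $A_i$ of $x$ and are the translations by the elements $c_{i,j}:=\sigma_x(e_j)$ and $d_{i,j}:=\tau_x(e_j)$ of $A_j$, matching the formulas of Theorem~\ref{thm:affmesh}.

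Condition \eqref{a:mesh} would then be automatic: $A_j$ equals the $\mathcal{G}(X)$-orbit of $e_j$, and in the group structure just placed on $A_j$ applying a composition of the generators $\sigma_x,\tau_x$ to $e_j$ only adds an integer combination of the $c_{i,j}$ and $d_{i,j}$, so $A_j=\langle c_{i,j},d_{i,j}\mid i\in I\rangle$; the agreement of the $A_j$ with the $\mathcal{G}(X)$-orbits is by construction. The one genuinely delicate point will be the passage from a transitive abelian action to a regular action of a quotient; once that is granted, the whole mesh description is merely a matter of reading translation constants off the action.
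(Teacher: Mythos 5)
Your proposal is correct and follows essentially the same route as the paper's (the proof recalled from the authors' earlier work, visible in the rack version of this statement): use the abelianness of $\mathcal{G}(X)$ together with equations \eqref{eq:red1}--\eqref{eq:red4} to see that $(\sigma_y,\tau_y)$ is constant on orbits, endow each orbit with the abelian group structure $\alpha(e)+\beta(e):=\alpha\beta(e)$ coming from the transitive abelian action, and read off the constants $c_{i,j},d_{i,j}$ as the translations induced by $\sigma_x,\tau_x$. The ``delicate point'' you flag is handled in the paper by the one-line observation that commutativity makes $\alpha(e)+\beta(e):=\alpha\beta(e)$ well defined, which is the same fact as your regularity of the quotient action.
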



\begin{de}\cite{JP24}
A solution $(X,\sigma, \tau)$ is $2$-\emph{permutational} if, for every $x,y,z\in X$:
\begin{align}
& \sigma_{\sigma_x(z)}=\sigma_{\sigma_y(z)}, \label{eq:per1}\\
& \tau_{\tau_x(z)}=\tau_{\tau_y(z)}, \label{eq:per2}\\
& \sigma_{\tau_x(z)}=\sigma_{\tau_y(z)}, \label{eq:per3}\\
& \tau_{\sigma_x(z)}=\tau_{\sigma_y(z)}. \label{eq:per4}
\end{align}
A solution is $2$-permutational, if and only if its multipermutation level is $2$ or less~\cite{JP24}.
\end{de}


Let us recall that a mapping $\Phi\colon Y\to X$ is a \emph{homomorphism} of two solutions $(Y,\lambda,\rho)$ and $(X,\sigma,\tau)$ if, for each $y\in Y$,
\[
\Phi\lambda_y=\sigma_{\Phi(y)}\Phi\quad \mathrm{and}
\quad \Phi\rho_y=\tau_{\Phi(y)}\Phi.
\]
$\Phi$ is an automorphism, if it is additionally bijection.

\section{Isotopes of solutions}

In this section we present a~special kind of a notion used in quasigroup theory: isotopy. In full generality,
an isotopy consists of permuting rows, columns and symbols of the multiplication table of a quasigroup. In our case, we just permute rows of $\sigma$ and columns of $\tau$.


\begin{de}
Let $(X,\sigma,\tau)$ be a solution and $\pi_1$ and $\pi_2$ be two bijections on the set $X$ satisfying for $x,y,z\in X$
\begin{align}
\sigma_x\pi_1\sigma_y&=\sigma_{\sigma_x\pi_1(y)}\pi_1\sigma_{\tau_y\pi_2(x)},\label{gis1}\\
\tau_x\pi_2\tau_y&=\tau_{\tau_x\pi_2(y)}\pi_2\tau_{\sigma_y\pi_1(x)},\label{gis3}\\
\tau_{\sigma_{\tau_y\pi_2(x)}\pi_1(z)}\pi_2\sigma_x\pi_1(y)&=
\sigma_{\tau_{\sigma_y\pi_1(z)}\pi_2(x)}\pi_1\tau_z\pi_2(y).\label{gis2}
\end{align}
The solution $(X,\lambda,\rho)$, with bijections $\lambda_x=\sigma_x\pi_1$ and $\rho_x=\tau_x\pi_2$, is called $(\pi_1,\pi_2)$-\emph{isotope} of $(X,\sigma,\tau)$.
In such a case we will say that solutions $(X,\sigma,\tau)$ and $(X,\lambda,\rho)$ are \emph{isotopic}. 
\end{de} 

Note that for every $x,y\in X$,
\begin{align}\label{dis1}
\lambda_x\lambda_y^{-1}&=\lambda_x\pi_1\pi_1^{-1}\lambda_y^{-1}=\sigma_x\sigma_y^{-1}\qquad
\text{ and }\\
\label{dis2}
 \rho_x \rho_y^{-1}&= \rho_x\pi_2\pi_2^{-1}\rho_y^{-1}=\tau_x\tau_y^{-1}.\qquad
\end{align}
\begin{lemma}\label{lm:disiso}
All isotopes of a given solution have the same displacement groups.
\end{lemma}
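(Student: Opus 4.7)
The plan is to exploit the two displayed identities \eqref{dis1} and \eqref{dis2} that the authors have already recorded just before the lemma. By definition, an isotope $(X,\lambda,\rho)$ of $(X,\sigma,\tau)$ satisfies $\lambda_x=\sigma_x\pi_1$ and $\rho_x=\tau_x\pi_2$, so the permutations $\pi_1$ and $\pi_2$ cancel as soon as one forms the ``differences'' $\lambda_x\lambda_y^{-1}$ and $\rho_x\rho_y^{-1}$. Identities \eqref{dis1} and \eqref{dis2} say precisely that
\[
\lambda_x\lambda_y^{-1}=\sigma_x\sigma_y^{-1}\qquad\text{and}\qquad \rho_x\rho_y^{-1}=\tau_x\tau_y^{-1}
\]
for all $x,y\in X$.

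The plan is then to note that the generating set of $\dis{(X,\lambda,\rho)}$, namely $\{\lambda_x\lambda_y^{-1},\rho_x\rho_y^{-1}:x,y\in X\}$, coincides element-by-element with the generating set $\{\sigma_x\sigma_y^{-1},\tau_x\tau_y^{-1}:x,y\in X\}$ of $\dis{(X,\sigma,\tau)}$. Two groups generated by the same set of permutations inside $S(X)$ must be equal, so $\dis{(X,\lambda,\rho)}=\dis{(X,\sigma,\tau)}$. Iterating (or by symmetry, using that the isotope relation is symmetric when one replaces $\pi_1,\pi_2$ by $\pi_1^{-1},\pi_2^{-1}$ up to indexing), one concludes that any two isotopes of a common solution share the same displacement group.

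There is essentially no obstacle here: all the work has been done in the observations \eqref{dis1}--\eqref{dis2}, and the lemma is an immediate consequence. The only point worth making explicit in the write-up is that we really are using the \emph{same} indexing of $X$ on both sides, so there is no relabelling issue; the generators match for each individual pair $(x,y)$, not merely as sets up to a bijection.
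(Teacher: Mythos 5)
Your proof is correct and is exactly the argument the paper intends: identities \eqref{dis1}--\eqref{dis2} show that the generating sets $\{\lambda_x\lambda_y^{-1},\rho_x\rho_y^{-1}\}$ and $\{\sigma_x\sigma_y^{-1},\tau_x\tau_y^{-1}\}$ coincide elementwise, so every isotope has the same displacement group as the original solution, and hence any two isotopes agree. The paper leaves the lemma without a separate proof precisely because the preceding display already contains it.
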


In our article, we shall be often working with a
$2$-reductive solution $(X,L,\mathbf{R})$.
In this case Conditions \eqref{gis1}--\eqref{gis2} clearly reduce to the following ones:
\begin{align}
L_x\pi_1 L_y&=L_{\pi_1(y)}\pi_1L_{\pi_2(x)},\label{is1}\\
\mathbf{R}_x\pi_2 \mathbf{R}_y&=\mathbf{R}_{\pi_2(y)}\pi_2\mathbf{R}_{\pi_1(x)},\label{is3}\\
\mathbf{R}_{\pi_1(y)}\pi_2 L_x\pi_1&=L_{\pi_2(x)}\pi_1 \mathbf{R}_y\pi_2,\label{is2}
\end{align}
for $x,y\in X$.

\begin{theorem}\cite{HJP24}\label{th:iso2red}
Let $(X,L,\mathbf{R})$ be a $2$-reductive solution and $\pi_1$ and $\pi_2$ be two bijections on the set $X$ satisfying \eqref{is1}--\eqref{is2}. Then the $(\pi_1,\pi_2)$-isotope of $(X,L,\mathbf{R})$ is a $2$-permutational solution.
\end{theorem}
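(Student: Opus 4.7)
The plan is to verify the four $2$-permutational identities \eqref{eq:per1}--\eqref{eq:per4} directly for $\lambda_x = L_x\pi_1$ and $\rho_x = \mathbf{R}_x\pi_2$. Before doing so, I would observe that the hypotheses \eqref{is1}--\eqref{is2} are precisely what the general isotopy conditions \eqref{gis1}--\eqref{gis2} reduce to after the $2$-reductive laws \eqref{eq:red1}--\eqref{eq:red4} have been applied to terms such as $\sigma_{\sigma_x\pi_1(y)}=L_{\pi_1(y)}$ and $\sigma_{\tau_y\pi_2(x)}=L_{\pi_2(x)}$. Hence $(X,\lambda,\rho)$ is automatically a solution, and only the statement about the multipermutation level requires work.

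The single observation that powers the whole proof is that $\lambda_x(z)$ always lies in the image of $L_x$ and $\rho_x(z)$ always lies in the image of $\mathbf{R}_x$. Combined with $2$-reductivity, this makes the outer index forget its subscript $x$: for example
\[
\lambda_{\lambda_x(z)}\;=\;L_{L_x\pi_1(z)}\,\pi_1\;\stackrel{\eqref{eq:red1}}{=}\;L_{\pi_1(z)}\,\pi_1,
\]
which depends only on $z$ and $\pi_1$, establishing \eqref{eq:per1}. The remaining identities follow by exactly the same ``$x$ disappears'' mechanism: \eqref{eq:per2} from $\rho_{\rho_x(z)}=\mathbf{R}_{\mathbf{R}_x\pi_2(z)}\pi_2=\mathbf{R}_{\pi_2(z)}\pi_2$ via \eqref{eq:red2}, \eqref{eq:per3} from $\lambda_{\rho_x(z)}=L_{\mathbf{R}_x\pi_2(z)}\pi_1=L_{\pi_2(z)}\pi_1$ via \eqref{eq:red3}, and \eqref{eq:per4} from $\rho_{\lambda_x(z)}=\mathbf{R}_{L_x\pi_1(z)}\pi_2=\mathbf{R}_{\pi_1(z)}\pi_2$ via \eqref{eq:red4}.

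There is no genuine obstacle; the theorem is essentially the assertion that the four $2$-reductivity identities are preserved when each $L_x$ and $\mathbf{R}_x$ is post-composed with a fixed bijection, and the isotopy conditions \eqref{is1}--\eqref{is2} are invoked only once, to guarantee that one still has a solution. The only point meriting attention is that one should check all four reductive identities (not just the two involving $L$ in the outer position), because the subscripts of $\lambda$ and $\rho$ in \eqref{eq:per1}--\eqref{eq:per4} range over images of both $L$ and $\mathbf{R}$.
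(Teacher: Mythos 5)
Your proof is correct. The paper itself imports this theorem from \cite{HJP24} without reproducing an argument, so there is nothing internal to compare against; your verification is the natural one and it is complete. The two points that need checking are exactly the two you address: first, that \eqref{is1}--\eqref{is2} are the specialisations of \eqref{gis1}--\eqref{gis2} to the $2$-reductive case (so that $(X,\lambda,\rho)$ satisfies the braid relations \eqref{birack:1}--\eqref{birack:3} and is non-degenerate, the maps $L_x\pi_1$ and $\mathbf{R}_x\pi_2$ being composites of bijections), and second, that each of \eqref{eq:per1}--\eqref{eq:per4} follows from the matching one of \eqref{eq:red1}--\eqref{eq:red4} because the subscript $\lambda_x(z)=L_x\pi_1(z)$ (resp.\ $\rho_x(z)=\mathbf{R}_x\pi_2(z)$) lies in the image of $L_x$ (resp.\ $\mathbf{R}_x$), so $2$-reductivity erases the dependence on $x$. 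Your closing caveat is also the right one: all four reductive identities are genuinely needed, since the mixed cases \eqref{eq:per3} and \eqref{eq:per4} invoke \eqref{eq:red3} and \eqref{eq:red4} rather than the two ``pure'' identities.
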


The following lemma directly follows by \cite[Lemma 4.6]{HJP24}.

\begin{lemma} \label{lem:simple}
 Suppose that $\pi_1$ and $\pi_2$ are two automorphisms of a $2$-reductive solution ~$(X,L,\mathbf{R})$. Then \eqref{is1}--\eqref{is2} are equivalent to
 \begin{align}
     L_x&=L_{\pi_1\pi_2(x)},\label{isop1}\\
     \mathbf{R}_x&=\mathbf{R}_{\pi_2\pi_1(x)},\label{isop3}\\
     \pi_1\pi_2&=\pi_2\pi_1.\label{isop2}
 \end{align}
\end{lemma}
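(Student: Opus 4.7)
The plan is to exploit exactly two features of the setup: (i) since $\pi_1,\pi_2\in\Aut(X,L,\mathbf{R})$, they satisfy the intertwining relations $\pi_i L_x=L_{\pi_i(x)}\pi_i$ and $\pi_i \mathbf{R}_x=\mathbf{R}_{\pi_i(x)}\pi_i$ for $i=1,2$; and (ii) since $(X,L,\mathbf{R})$ is $2$-reductive, the group $\mathcal{G}(X)$ is abelian by the proposition preceding Theorem \ref{th:2red}, so any two operators from $\{L_a,\mathbf{R}_a:a\in X\}$ commute. Each of the three isotopy conditions then collapses to one of the listed pointwise identities by pushing the $\pi_i$'s past the $L$/$\mathbf{R}$ operators and then canceling using abelianness.

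First I would dispose of \eqref{is1}. Applying (i) to both sides rewrites it as
\[
L_x L_{\pi_1(y)}\pi_1 \;=\; L_{\pi_1(y)} L_{\pi_1\pi_2(x)}\pi_1.
\]
Cancel $\pi_1$, use abelianness to swap $L_xL_{\pi_1(y)}$, and cancel the common factor $L_{\pi_1(y)}$; this yields $L_x=L_{\pi_1\pi_2(x)}$, which is \eqref{isop1}. The converse simply runs the chain in reverse. An entirely symmetric argument handles \eqref{is3}: rewriting by (i) gives $\mathbf{R}_x\mathbf{R}_{\pi_2(y)}\pi_2=\mathbf{R}_{\pi_2(y)}\mathbf{R}_{\pi_2\pi_1(x)}\pi_2$, whence abelianness produces $\mathbf{R}_x=\mathbf{R}_{\pi_2\pi_1(x)}$, i.e.\ \eqref{isop3}.

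For \eqref{is2}, I would push $\pi_2$ past $L_x$ on the left side and $\pi_1$ past $\mathbf{R}_y$ on the right, obtaining
\[
\mathbf{R}_{\pi_1(y)} L_{\pi_2(x)}\pi_2\pi_1 \;=\; L_{\pi_2(x)} \mathbf{R}_{\pi_1(y)}\pi_1\pi_2.
\]
Abelianness of $\mathcal{G}(X)$ makes the operator factors on the two sides coincide, and canceling them (taking any fixed $x,y$) leaves $\pi_2\pi_1=\pi_1\pi_2$, i.e.\ \eqref{isop2}. Conversely, given \eqref{isop2} and abelianness, the same chain of equalities read backwards establishes \eqref{is2}.

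There is no real obstacle; the whole argument is a bookkeeping exercise. The only subtlety to watch is that \eqref{is1}--\eqref{is2} are universal in $x,y$, so that after the operator cancellations we genuinely obtain \eqref{isop1}--\eqref{isop3} as identities in $x$ (rather than, say, only on a single orbit). This is automatic because the intertwining relations from (i) hold for every index.
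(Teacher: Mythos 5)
Your proof is correct and follows essentially the same route as the paper's: rewrite each of \eqref{is1}--\eqref{is2} using the intertwining relations $\pi_i L_x=L_{\pi_i(x)}\pi_i$, $\pi_i\mathbf{R}_x=\mathbf{R}_{\pi_i(x)}\pi_i$, invoke abelianness of $\mathcal{G}(X)$ for a $2$-reductive solution, and cancel the remaining bijections. The paper's argument is simply a terser version of the same computation (it works out \eqref{is1} explicitly and declares \eqref{is3} analogous and \eqref{is2} clear).
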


\begin{proof}
 Let $x,y\in X$. For \eqref{is1} we have:
   \[
   L_x\pi_1L_y=L_{\pi_1(y)}\pi_1L_{\pi_2(x)}=\pi_1L_{\pi_2(x)}
L_y=L_{\pi_1\pi_2(x)}\pi_1L_y    \]    
and similarly for \eqref{is3}. \eqref{is2} is clear.
\end{proof}



Conditions of the preceding lemma resemble \eqref{eq:sigma_UT}, if $U$ and $T$
happen to be automorphisms. It is so in the case
of solutions of multipermutation level~$2$.

\begin{theorem}\cite{HJP24}
    Let $(X,\sigma,\tau)$ be a $2$-permutational solution. Then
    the permutations $U$ and $T$ are commuting automorphisms
    of $(X,\sigma,\tau)$ satisfying \eqref{gis1}--\eqref{gis2}. The $(U,T)$-isotope of $(X,\sigma,\tau)$ is a square-free $2$-reductive solution.
\end{theorem}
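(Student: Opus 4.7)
The plan is to extract from the 2-permutational hypothesis a short list of auxiliary identities, use them together with the three braid relations to deduce that $U$ and $T$ are automorphisms of $(X,\sigma,\tau)$, and then verify both the isotopy conditions \eqref{gis1}--\eqref{gis2} and the 2-reductive, square-free claims for the $(U,T)$-isotope $(X,\sigma U,\tau T)$.

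The starting point exploits that $\sigma_y U(y) = y$ and $\tau_y T(y) = y$. Choosing the free index in \eqref{eq:per1}--\eqref{eq:per4} equal to $y$ immediately yields the four ``mixed'' identities $\sigma_{\sigma_x U(y)} = \sigma_y$, $\tau_{\sigma_x U(y)} = \tau_y$, $\sigma_{\tau_x T(y)} = \sigma_y$ and $\tau_{\tau_x T(y)} = \tau_y$. In the same spirit, since $\tau_{T^{-1}(x)}(x) = T^{-1}(x)$ and $\sigma_{U^{-1}(x)}(x) = U^{-1}(x)$, combining \eqref{eq:per3}--\eqref{eq:per4} with \eqref{eq:LU} yields $\sigma_{\tau_y(x)} = \sigma_{U(x)}$ and $\tau_{\sigma_y(x)} = \tau_{T(x)}$.

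Given these identities, $U\sigma_x = \sigma_{U(x)}U$ drops out by rewriting \eqref{birack:1} as $\sigma_{\sigma_x(y)}^{-1} = \sigma_{\tau_y(x)}\sigma_y^{-1}\sigma_x^{-1}$ and evaluating at $\sigma_x(y)$; the symmetric treatment of \eqref{birack:3} gives $T\tau_x = \tau_{T(x)}T$. The hard part will be the two \emph{crossed} automorphism relations $T\sigma_x = \sigma_{T(x)}T$ and $U\tau_x = \tau_{U(x)}U$, which require the nontrivial braid identity \eqref{birack:2}. My plan is to specialize \eqref{birack:2} by first setting $y\mapsto U(y')$ and then $z\mapsto U(y)$; the four ``inner'' subscripts of the form $\sigma_{\tau_\bullet(\bullet)}$ and $\tau_{\sigma_\bullet(\bullet)}$ collapse via the identities of the previous paragraph, and \eqref{eq:sigma_UT} absorbs the resulting $UT$-factor, yielding the key identity
$$
\tau_y\sigma_x \;=\; \sigma_{U(x)}\tau_{U(y)}.
$$
Substituting $x\mapsto T(x)$, $y\mapsto T(y)$ and using $UT=TU$ together with \eqref{eq:sigma_UT} rearranges this into $\sigma_x(w) = \tau_{T(w)}\sigma_{T(x)}T(w)$ for every $w$, which together with $\tau_{\sigma_x(w)} = \tau_{T(w)}$ is precisely $T\sigma_x = \sigma_{T(x)}T$; the exchange $x\leftrightarrow y$ in the key identity, evaluated at $U(w)$, symmetrically delivers $U\tau_x = \tau_{U(x)}U$. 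Commutation of $U$ and $T$ has been recorded earlier.

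What remains is bookkeeping. Each outer subscript appearing in \eqref{gis1}--\eqref{gis2} collapses via one of the mixed identities, and the surviving permutation equalities reduce either to the key identity or to the commutation relation $\sigma_x\sigma_{U(y)} = \sigma_y\sigma_{U(x)}$, which itself is a direct consequence of \eqref{birack:1} together with the mixed identity for $\sigma_{\sigma_xU(y)}$ and $\sigma_{\tau_{U(y)}(x)} = \sigma_{U(x)}$. Hence $(U,T)$ satisfies \eqref{gis1}--\eqref{gis2} and the isotope $(X,\sigma U,\tau T)$ is a solution. The four 2-reductive axioms \eqref{eq:red1}--\eqref{eq:red4} for $\lambda_x = \sigma_x U$ and $\rho_x = \tau_x T$ follow directly from the mixed identities, and square-freeness is the trivial $\sigma_x U(x) = \tau_x T(x) = x$.
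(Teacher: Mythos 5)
The paper does not actually prove this theorem: it is imported verbatim from \cite{HJP24}, so there is no internal proof to compare your argument against. Judged on its own, your proposal is correct and complete in all essential steps. The four ``mixed'' identities obtained by specializing \eqref{eq:per1}--\eqref{eq:per4} at $z=U(y)$ resp.\ $z=T(y)$ are right, as are the collapses $\sigma_{\tau_y(x)}=\sigma_{U(x)}$ and $\tau_{\sigma_y(x)}=\tau_{T(x)}$ via \eqref{eq:LU}. Your derivation of $U\sigma_x=\sigma_{U(x)}U$ from \eqref{birack:1} and of $T\tau_x=\tau_{T(x)}T$ from \eqref{birack:3} checks out, and the crucial observation that under $2$-permutationality the middle braid relation \eqref{birack:2} degenerates to $\tau_{T(z)}\sigma_x=\sigma_{U(x)}\tau_z$, hence to $\tau_y\sigma_x=\sigma_{U(x)}\tau_{U(y)}$ after the substitution $z=U(y)$ and an appeal to \eqref{eq:sigma_UT}, is exactly the identity needed for the two crossed automorphism relations and for \eqref{gis2}. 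I verified that \eqref{gis1} and \eqref{gis3} reduce, after the outer subscripts collapse, to $\sigma_x\sigma_{U(y)}=\sigma_y\sigma_{U(x)}$ and $\tau_x\tau_{T(y)}=\tau_y\tau_{T(x)}$, both of which follow from \eqref{birack:1} resp.\ \eqref{birack:3} as you indicate, and that \eqref{gis2} reduces to an instance of your key identity. The $2$-reductivity of the isotope is immediate from the mixed identities and square-freeness from $\sigma_xU(x)=\tau_xT(x)=x$. Two cosmetic remarks: you implicitly use that $UT=TU$, which the paper records as a known fact for arbitrary solutions (so no gap), and you might add one line noting that $x\mapsto\rho_x(y)=\tau_x(T(y))$ is a bijection for each $y$, so the isotope is again non-degenerate.
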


If we denote by $(X,L,\mathbf{R})$ the $(U,T)$-isotope of a $2$-permutational solution then
$L_x=\sigma_xU$ and $\mathbf{R}_x=\tau_xT$ and
we see that $\LR{X,\sigma,\tau}=\mathcal{G}(X,L,\mathbf{R})=\LR{X,L,\mathbf{R}}=\GD{X,L,\mathbf{R}}$.

\begin{corollary}\label{corol:LR-abelian}
    Let $(X,\sigma,\tau)$ be a $2$-permutational solution. Then $\LR{X}$ is abelian.
\end{corollary}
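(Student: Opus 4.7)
The plan is to leverage the preceding theorem, which provides the key reduction: for a $2$-permutational solution $(X,\sigma,\tau)$, the maps $U$ and $T$ are commuting automorphisms satisfying the isotopy conditions \eqref{gis1}--\eqref{gis2}, and the resulting $(U,T)$-isotope $(X,L,\mathbf{R})$ with $L_x=\sigma_x U$, $\mathbf{R}_x=\tau_x T$ is a square-free $2$-reductive solution. This means we do not need to manipulate the braid relations directly; we can simply push the problem over to the reductive isotope, where abelianness is already known.

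First I would observe that by the very definition of $\LR{X,\sigma,\tau}$, its generators $\sigma_x U$ and $\tau_x T$ are precisely the generators $L_x$ and $\mathbf{R}_x$ of $\mathcal{G}(X,L,\mathbf{R})$. Hence the identification
\[
\LR{X,\sigma,\tau}=\mathcal{G}(X,L,\mathbf{R})
\]
noted in the paragraph after the preceding theorem is immediate, not a further claim to be verified.

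Next I would invoke the Proposition stated earlier in the excerpt asserting that for any $2$-reductive solution $(X,L,\mathbf{R})$, the permutation group $\mathcal{G}(X,L,\mathbf{R})$ is abelian. Applying this to the isotope constructed above yields that $\mathcal{G}(X,L,\mathbf{R})$ is abelian, and therefore so is $\LR{X,\sigma,\tau}$.

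There is no real obstacle here: the entire argument is a direct invocation of two earlier results stacked together. The only subtlety worth flagging is that the identification $\LR{X,\sigma,\tau}=\mathcal{G}(X,L,\mathbf{R})$ relies on the fact that $(X,L,\mathbf{R})$ is $2$-reductive (so that the generators of $\mathcal{G}(X,L,\mathbf{R})$ are exactly $L_x,\mathbf{R}_x$), which in turn depends on the previous theorem; once this is on the table, the corollary is immediate.
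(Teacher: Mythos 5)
Your proof is correct and is exactly the argument the paper intends: the identification $\LR{X,\sigma,\tau}=\mathcal{G}(X,L,\mathbf{R})$ is stated in the paragraph immediately preceding the corollary, and abelianness then follows from the proposition that $\mathcal{G}$ of a $2$-reductive solution is abelian. (Your closing caveat is slightly misplaced — the identification of generators holds for any solution by the definition of $\mathcal{G}$; what the preceding theorem supplies is that the $(U,T)$-isotope is a $2$-reductive solution, which is needed only for the abelianness step — but this does not affect the validity of the argument.)
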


Moreover, since, for $2$-permutational solutions, $U$ and $T$ are automorphisms
of~$(X,\sigma,\tau)$, we have $T\sigma_x T^{-1}=\sigma_{T(x)}$ and also $T L_x T^{-1}=T \sigma_x UT^{-1}=T\sigma_x T^{-1}U=\sigma_{T(x)}U=L_{T(x)}$.
Similarly also for other conjugations of $\sigma_x$, $\tau_x$, $L_x$ and $\mathbf{R}_x$ by $U$ or~$T$.

\begin{lemma}    
Let $(X,\sigma,\tau)$ be a $2$-permutational solution. Then
    \[
    \dis{X}=\left\{ L_{x_1}^{r_1}\ldots L_{x_k}^{r_k}\mathbf{R}_{y_1}^{s_1}\ldots\mathbf{R}_{y_m}^{s_m} : \sum_{i=1}^k r_i=\sum_{j=1}^ms_i=0\right\}.
    \]
\end{lemma}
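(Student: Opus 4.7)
My plan is to leverage two facts already established in this section: by Lemma \ref{lm:disiso} the displacement group is invariant under isotopy, and by Corollary \ref{corol:LR-abelian} the group $\LR{X}$ is abelian. Since $\dis{X}\leq\LR{X}$, the displacement group itself is abelian, which makes the indicated normal form possible.

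First I would rewrite the standard generators of $\dis{X}$ in the $L,\mathbf{R}$ notation. Using $L_x=\sigma_xU$ and $\mathbf{R}_x=\tau_xT$, one computes directly (or invokes \eqref{dis1}--\eqref{dis2}) that
\[
\sigma_x\sigma_y^{-1}=L_xU^{-1}UL_y^{-1}=L_xL_y^{-1},\qquad \tau_x\tau_y^{-1}=\mathbf{R}_x\mathbf{R}_y^{-1},
\]
so $\dis{X}=\langle L_xL_y^{-1},\mathbf{R}_x\mathbf{R}_y^{-1}:x,y\in X\rangle$.

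For the inclusion $\subseteq$, I would observe that the set on the right-hand side is a subgroup of the abelian group $\LR{X}$: it is closed under products (concatenate the words; both exponent sums remain zero) and, by commutativity, under inverses (reverse the word and negate exponents; the sums remain zero). Each generator $L_xL_y^{-1}$ has $L$-exponents summing to $1+(-1)=0$ and no $\mathbf{R}$-factors, and symmetrically for $\mathbf{R}_x\mathbf{R}_y^{-1}$, so the subgroup contains all generators of $\dis{X}$.

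For the inclusion $\supseteq$, given a product $w=L_{x_1}^{r_1}\cdots L_{x_k}^{r_k}\mathbf{R}_{y_1}^{s_1}\cdots\mathbf{R}_{y_m}^{s_m}$ with $\sum r_i=\sum s_j=0$, I fix any basepoint $e\in X$ and use abelianness of $\LR{X}$ to split each $L_{x_i}^{r_i}=(L_{x_i}L_e^{-1})^{r_i}L_e^{r_i}$ and each $\mathbf{R}_{y_j}^{s_j}=(\mathbf{R}_{y_j}\mathbf{R}_e^{-1})^{s_j}\mathbf{R}_e^{s_j}$. Rearranging the product, the residual factors collect as $L_e^{\sum r_i}\mathbf{R}_e^{\sum s_j}=\id$ by hypothesis, so $w$ equals a product of the generators $L_xL_e^{-1}$ and $\mathbf{R}_y\mathbf{R}_e^{-1}$ of $\dis{X}$, and hence lies in $\dis{X}$.

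I do not expect a real obstacle: everything is driven by the commutativity of $\LR{X}$ and the identification of the displacement generators across the isotopy $(X,\sigma,\tau)\leftrightarrow(X,L,\mathbf{R})$. The only thing to be careful about is to make the forward/backward rearrangement rigorous by citing Corollary \ref{corol:LR-abelian} once, rather than re-proving commutativity inside the calculation.
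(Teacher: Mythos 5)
Your proof is correct and follows essentially the same route as the paper's: one inclusion because the right-hand side is a subgroup (using commutativity of $\LR{X}$) containing the generators $\sigma_x\sigma_y^{-1}=L_xL_y^{-1}$ and $\tau_x\tau_y^{-1}=\mathbf{R}_x\mathbf{R}_y^{-1}$, the other from abelianness of $\LR{X}$. You merely make explicit, via the basepoint decomposition $L_{x_i}^{r_i}=(L_{x_i}L_e^{-1})^{r_i}L_e^{r_i}$, the step the paper dismisses as following ``trivially'' from abelianness.
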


\begin{proof}
    One inclusion follows from the fact that the set defined above is a group containing all the generators of $\dis{X}$ since $\sigma_x\sigma_y^{-1}=L_xL_y^{-1}$
    and $\tau_x\tau_y^{-1}=\mathbf{R}_x\mathbf{R}_y^{-1}$. The other inclusion follows trivially
    from the abelianess of $\LR{X}$.
\end{proof}

\begin{proposition}
    Let $(X,\sigma,\tau)$ be a $2$-permutational solution. Then $\mathcal{G}(X)$, $\LR{X}$ and $\dis{X}$ are normal subgroups of~$\GD{X}$.
\end{proposition}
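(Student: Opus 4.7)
The plan is to verify, for each of the three subgroups $N\in\{\mathcal{G}(X),\LR{X},\dis{X}\}$, that conjugation by each generator of $\GD{X}=\langle\sigma_x,\tau_x,U,T:x\in X\rangle$ sends $N$ into itself. Since $(X,\sigma,\tau)$ is $2$-permutational, the quoted theorem from \cite{HJP24} gives us the crucial structural fact that $U$ and $T$ are themselves automorphisms of the solution, and Corollary~\ref{corol:LR-abelian} tells us that $\LR{X}$ is abelian. These two facts do essentially all the work.

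First, because $U$ and $T$ are automorphisms of $(X,\sigma,\tau)$, I have $U\sigma_xU^{-1}=\sigma_{U(x)}$, $U\tau_xU^{-1}=\tau_{U(x)}$, and the analogous identities for $T$; as noted in the paragraph preceding Corollary~\ref{corol:LR-abelian}, these lift to $UL_xU^{-1}=L_{U(x)}$, $U\mathbf{R}_xU^{-1}=\mathbf{R}_{U(x)}$, and the same with $T$. Hence $U$ and $T$ permute the generating sets $\{\sigma_x,\tau_x\}$, $\{L_x,\mathbf{R}_x\}$, and $\{\sigma_x\sigma_y^{-1},\tau_x\tau_y^{-1}\}$ of $\mathcal{G}(X)$, $\LR{X}$, $\dis{X}$, respectively, among themselves, which immediately normalizes all three groups.

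Second, conjugation by $\sigma_z$ or $\tau_z$ trivially preserves $\mathcal{G}(X)$, finishing that case. For $\LR{X}$ and $\dis{X}$, the key trick is to rewrite the conjugating elements as $\sigma_z=L_zU^{-1}$ and $\tau_z=\mathbf{R}_zT^{-1}$ and to exploit the abelianness of $\LR{X}$. For instance,
\begin{equation*}
\sigma_z L_x \sigma_z^{-1}=L_zU^{-1}L_xUL_z^{-1}=L_zL_{U^{-1}(x)}L_z^{-1}=L_{U^{-1}(x)}\in\LR{X},
\end{equation*}
and the analogous computation with $\mathbf{R}_x$ in place of $L_x$ works because $U^{-1}\mathbf{R}_xU=\mathbf{R}_{U^{-1}(x)}\in\LR{X}$. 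Conjugation by $\tau_z=\mathbf{R}_zT^{-1}$ is handled identically using commutativity of $\LR{X}$. The same argument, applied to generators $L_xL_y^{-1}$ and $\mathbf{R}_x\mathbf{R}_y^{-1}$ of $\dis{X}$, gives
\begin{equation*}
\sigma_z\bigl(L_xL_y^{-1}\bigr)\sigma_z^{-1}=L_{U^{-1}(x)}L_{U^{-1}(y)}^{-1}\in\dis{X},
\end{equation*}
and likewise for the other generator family and for conjugation by $\tau_z$.

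I do not expect any real obstacle: the only step that looks non-formal is handling conjugation by $\sigma_z$ and $\tau_z$ on $\LR{X}$ and $\dis{X}$, and the factorization $\sigma_z=L_zU^{-1}$ combined with the abelianness of $\LR{X}$ collapses it to a one-line computation. Everything else is a bookkeeping check that automorphisms of a solution permute the indexed generating families.
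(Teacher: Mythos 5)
Your proof is correct and follows essentially the same route as the paper's: conjugation by $U$ and $T$ is handled via the fact that they are automorphisms (permuting the generating families), and conjugation by $\sigma_z$, $\tau_z$ is reduced, via the factorization $\sigma_z=L_zU^{-1}$ and the abelianness of $\LR{X}$, to the computation $\sigma_z L_x\sigma_z^{-1}=L_{U^{-1}(x)}$, which is exactly the paper's one-line calculation up to taking the inverse conjugator.
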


\begin{proof}
    Conjugation by~$U$ or~$T$ was already discussed before. Now
    \[
    \sigma_y^{-1} L_x\sigma_y=UU^{-1}\sigma_y^{-1}
    L_x\sigma_yUU^{-1}=UL_y^{-1}L_xL_yU^{-1}=UL_xU^{-1}=L_{U(x)}
    \]
    and similarly for other possible conjugations.
\end{proof}

\begin{proposition}\label{Pr:DQ}
Let $(X,\sigma,\tau)$ be a 
$2$-permutational solution.
Then the natural actions of the groups 
$\LR{X}$
and $\dis{X}$ on $X$ have the same orbits.
\end{proposition}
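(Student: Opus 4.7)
The plan is to establish equality of the orbits by showing $\LR{X}=\dis{X}\cdot\LR{X}_e$ for every $e\in X$, which is essentially the calculation already performed in Lemma~\ref{lem:isom-LR-dis}. Since $\dis{X}\leq\LR{X}$, the inclusion $\dis{X}.e\subseteq\LR{X}.e$ is automatic; all the work is in the reverse inclusion.

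First, I would invoke Corollary~\ref{corol:LR-abelian} to conclude that $\LR{X}$ is abelian. In particular every subgroup of $\LR{X}$ is normal, so the product $\dis{X}\cdot\LR{X}_e$ is a genuine subgroup. Next, fix $e\in X$ and observe that $L_e(e)=\sigma_eU(e)=\sigma_e\sigma_e^{-1}(e)=e$ and $\mathbf{R}_e(e)=\tau_eT(e)=e$, so both $L_e$ and $\mathbf{R}_e$ belong to $\LR{X}_e$. For an arbitrary generator $L_x$ of $\LR{X}$ one can write
\[
L_x=(L_xL_e^{-1})\cdot L_e,
\]
where $L_xL_e^{-1}=\sigma_x\sigma_e^{-1}\in\dis{X}$ and $L_e\in\LR{X}_e$; similarly $\mathbf{R}_x=(\mathbf{R}_x\mathbf{R}_e^{-1})\cdot\mathbf{R}_e\in\dis{X}\cdot\LR{X}_e$. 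Hence every generator of $\LR{X}$ lies in the subgroup $\dis{X}\cdot\LR{X}_e$, so $\LR{X}=\dis{X}\cdot\LR{X}_e$.

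Finally, given any $g\in\LR{X}$, write $g=hk$ with $h\in\dis{X}$ and $k\in\LR{X}_e$. Then $g(e)=h(k(e))=h(e)\in\dis{X}.e$, which shows $\LR{X}.e\subseteq\dis{X}.e$. Since $e\in X$ was arbitrary, the two orbits coincide for every point of $X$.

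I do not anticipate any serious obstacle here: the only ingredient not already present in the statement itself is the commutativity of $\LR{X}$ (needed to form the product subgroup without fuss), which is exactly what Corollary~\ref{corol:LR-abelian} supplies. The argument is in fact a mild variation of the proof of Lemma~\ref{lem:isom-LR-dis}, and one could alternatively deduce the result from that lemma by orbit--stabilizer reasoning, but the direct decomposition above avoids a cardinality argument and therefore works regardless of whether $X$ is finite.
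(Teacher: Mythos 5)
Your proof is correct and is essentially the paper's argument in a slightly more structural dress: the paper corrects an arbitrary word $\alpha\in\LR{X}$ by $L_y^{-r}\mathbf{R}_y^{-s}$ at the target point $y$ (using $L_y(y)=\mathbf{R}_y(y)=y$ and the zero-exponent-sum description of $\dis{X}$), whereas you package the same cancellation as the subgroup decomposition $\LR{X}=\dis{X}\cdot\LR{X}_e$ at the source point, exactly as in Lemma~\ref{lem:isom-LR-dis}. Both rest on the same two facts — abelianness of $\LR{X}$ and the fixed points of $L_e$, $\mathbf{R}_e$ — so no further comment is needed.
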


\begin{proof}
    Let $x,y\in X$ be two elements such that $y=\alpha(x)$ with $\alpha\in\langle\{L_x,\mathbf{R}_x\mid x\in X\}\rangle$. Then there are $r_1,\ldots,r_k,s_1,\ldots,s_m\in \mathbb{Z}$ and $x_1,\ldots,x_k,$ $y_1,\ldots,y_m\in X$ such that $\alpha=L_{x_1}^{r_1}\ldots L_{x_k}^{r_k}\mathbf{R}_{y_1}^{s_1}\ldots\mathbf{R}_{y_m}^{s_m}$.

    Let $r=r_1+\ldots+r_k$ and $s=s_1+\ldots+s_m$. Clearly, we have $\beta=L_y^{-r}\mathbf{R}_{y}^{-s}\alpha\in \dis X$ and 
    $\beta(x) = L_y^{-r}\mathbf{R}_{y}^{-s}\alpha(x)= L_y^{-r}\mathbf{R}_{y}^{-s}(y) = y$.
\end{proof}

\begin{lemma}\label{lem:alpha}
    Let $(X,\sigma,\tau)$ be a $2$-permutational solution and let $\alpha\in \LR{X}$. Then, for all $x\in X$,
    \begin{equation}
        \sigma_{\alpha(x)}=\sigma_x \qquad \text{and}\qquad \tau_{\alpha(x)}=\tau_x.
        \label{eq:2-red-dis}
    \end{equation}
\end{lemma}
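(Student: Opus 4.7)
The plan is to translate the statement into the $2$-reductive world via the $(U,T)$-isotope and then invoke the $2$-reductive identities. Let $(X,L,\mathbf{R})$ be the $(U,T)$-isotope of $(X,\sigma,\tau)$, so that $L_x=\sigma_x U$ and $\mathbf{R}_x=\tau_x T$ for every $x\in X$. By the theorem quoted just before this lemma, $(X,L,\mathbf{R})$ is a (square-free) $2$-reductive solution, and by the paragraph that follows it we have the identifications
\[
\LR{X,\sigma,\tau}=\LR{X,L,\mathbf{R}}=\mathcal{G}(X,L,\mathbf{R}).
\]
In particular, every $\alpha\in\LR{X}$ can be written as a finite word in the generators $L_x$, $\mathbf{R}_x$ and their inverses.

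Next I would prove the auxiliary claim that $L_{\alpha(x)}=L_x$ and $\mathbf{R}_{\alpha(x)}=\mathbf{R}_x$ for every $\alpha\in\mathcal{G}(X,L,\mathbf{R})$ and every $x\in X$. This is a straightforward induction on the length of a word for $\alpha$: the base case (when $\alpha$ is a single generator $L_y^{\pm1}$ or $\mathbf{R}_y^{\pm1}$) is exactly the $2$-reductive identities \eqref{eq:red1}--\eqref{eq:red4}, where the inverse case follows by replacing $x$ with, e.g., $L_y^{-1}(x)$ in the identity $L_{L_y(z)}=L_z$. The inductive step is immediate since these identities say precisely that the maps $x\mapsto L_x$ and $x\mapsto \mathbf{R}_x$ are constant on the orbits of any generator, hence on the orbits of $\mathcal{G}(X,L,\mathbf{R})$.

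Finally, I would translate back: since $L_{\alpha(x)}=L_x$ and $L_y=\sigma_y U$ with $U$ independent of~$y$, we obtain $\sigma_{\alpha(x)}U=\sigma_x U$, and cancelling $U$ on the right gives $\sigma_{\alpha(x)}=\sigma_x$. The identity $\tau_{\alpha(x)}=\tau_x$ is obtained identically from $\mathbf{R}_{\alpha(x)}=\mathbf{R}_x$ by cancelling $T$. There is no real obstacle here — the whole content of the lemma is the isotope bridge, and the only small subtlety worth checking carefully is that the $2$-reductive identities really do propagate through inverses of generators, which is what gives the induction the correct base.
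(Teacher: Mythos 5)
Your proof is correct and follows essentially the same route as the paper: pass to the $(U,T)$-isotope $(X,L,\mathbf{R})$, use $2$-reductivity to conclude $L_{\alpha(x)}=L_x$ and $\mathbf{R}_{\alpha(x)}=\mathbf{R}_x$, then cancel $U$ (resp.\ $T$) to recover the statement for $\sigma$ and $\tau$. The only cosmetic difference is that you run an induction on word length in the generators, whereas the paper first uses commutativity of $\LR{X}$ to write $\alpha$ in the normal form $L_{x_1}^{r_1}\cdots L_{x_k}^{r_k}\mathbf{R}_{y_1}^{s_1}\cdots\mathbf{R}_{y_m}^{s_m}$ and applies the reductivity identities once.
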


\begin{proof}
    Since $\LR{X}$ is abelian, there are $r_1,\ldots,r_k,s_1,\ldots,s_m\in \mathbb{Z}$ and $x_1,\ldots,x_k,$ $y_1,\ldots,y_m\in X$ such that $\alpha=L_{x_1}^{r_1}\ldots L_{x_k}^{r_k}\mathbf{R}_{y_1}^{s_1}\ldots\mathbf{R}_{y_m}^{s_m}$.
    Then
    $$\sigma_{\alpha(x)}=L_{L_{x_1}^{r_1}\ldots L_{x_k}^{r_k}\mathbf{R}_{y_1}^{s_1}\ldots\mathbf{R}_{y_m}^{s_m}(x)}U^{-1}=L_xU^{-1}=\sigma_x$$
    since the solution $(X,L,\mathbf{R})$ is $2$-reductive. The other claim is analogous.
\end{proof}

\section{Indecomposable solutions}
From now on, let us fix the following notation: we have $(X,\sigma,\tau)$, a solution of multipermutation level~$2$, and we recall that the mappings $U(x)=\sigma_{x}^{-1}(x)$ and $T(x)=\tau_{x}^{-1}(x)$ are automorphisms of~$(X,\sigma,\tau)$.
Let us denote, like in the previous section,
$L_x=\sigma_xU$ and $\mathbf{R}_x=\tau_xT$,
for each $x\in X$.


\begin{lemma}\label{lm:relTU}
    For every 
    $x,y\in X$, we have 
    \begin{align*}
    \sigma_{x}^{-1}\sigma_y&=\sigma_{U(y)}\sigma_{U(x)}^{-1} &
    \tau_{x}^{-1}\tau_y&=\tau_{T(y)}\tau_{T(x)}^{-1}\\
\sigma_{x}\tau_y&=\tau_{T(y)}\sigma_{T(x)}
&
\sigma_{x}\tau_y^{-1}&=\tau_{T(y)}^{-1}\sigma_{U(x)} \\
    \sigma_x\sigma_y&=\sigma_{T(y)}\sigma_{U(x)} &
    \tau_x\tau_y&=\tau_{U(y)}\tau_{T(x)}
    \end{align*}
\end{lemma}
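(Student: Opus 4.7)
The key input is Corollary~\ref{corol:LR-abelian}, which tells us that $\LR{X}=\langle L_x,\mathbf{R}_x:x\in X\rangle$ is abelian, together with the fact (established just before Corollary~\ref{corol:LR-abelian}) that $U$ and $T$ are commuting automorphisms of $(X,\sigma,\tau)$. All six identities will be read off from the three commutation relations $L_xL_y=L_yL_x$, $\mathbf{R}_x\mathbf{R}_y=\mathbf{R}_y\mathbf{R}_x$ and $L_x\mathbf{R}_y=\mathbf{R}_yL_x$ in $\LR{X}$, using repeatedly that $U$ (resp.\ $T$) conjugates $\sigma_x,\tau_x$ to $\sigma_{U(x)},\tau_{U(x)}$ (resp.\ $\sigma_{T(x)},\tau_{T(x)}$), together with the identities \eqref{eq:LU} and \eqref{eq:sigma_UT}.

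For the left column, expand $L_xL_y=L_yL_x$ as $\sigma_xU\sigma_yU=\sigma_yU\sigma_xU$. Cancel the trailing $U$, then rewrite $U\sigma_y=\sigma_{U(y)}U$ and $U\sigma_x=\sigma_{U(x)}U$ and cancel the final $U$ to obtain
\[
\sigma_x\sigma_{U(y)}=\sigma_y\sigma_{U(x)}.
\]
Rearranging gives the first identity directly. Substituting $y\mapsto T(y)$ into the same equation and using $\sigma_{UT(y)}=\sigma_y$ from \eqref{eq:sigma_UT} yields the fifth identity $\sigma_x\sigma_y=\sigma_{T(y)}\sigma_{U(x)}$. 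The right column for $\tau$ is symmetric: from $\mathbf{R}_x\mathbf{R}_y=\mathbf{R}_y\mathbf{R}_x$ one obtains $\tau_x\tau_{T(y)}=\tau_y\tau_{T(x)}$; rearranging gives identity~2, and substituting $y\mapsto U(y)$ together with $\tau_{UT(y)}=\tau_y$ gives identity~6.

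For the mixed identities, start from $L_x\mathbf{R}_y=\mathbf{R}_yL_x$, i.e.\ $\sigma_xU\tau_yT=\tau_yT\sigma_xU$. Using $UT=TU$, conjugate the inner symbols via $U\tau_y=\tau_{U(y)}U$ and $T\sigma_x=\sigma_{T(x)}T$ and cancel the trailing $UT$ to get
\[
\sigma_x\tau_{U(y)}=\tau_y\sigma_{T(x)}.
\]
Substituting $y\mapsto U^{-1}(y)$ and invoking \eqref{eq:LU} (which gives $\tau_{U^{-1}(y)}=\tau_{T(y)}$) produces identity~3. For identity~4, apply identity~3 with $x$ replaced by $U(x)$: the right-hand side contains $\sigma_{TU(x)}$, which equals $\sigma_x$ by \eqref{eq:sigma_UT}, so the equation collapses to $\sigma_{U(x)}\tau_y=\tau_{T(y)}\sigma_x$, and rearranging gives $\sigma_x\tau_y^{-1}=\tau_{T(y)}^{-1}\sigma_{U(x)}$.

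The computation itself is routine once one notices the organizing principle. The only place where care is required is keeping the translation between $U$-shifts and $T$-shifts of subscripts consistent: without applying \eqref{eq:LU} at the right moment one arrives at each identity with the ``wrong'' automorphism on one side, and without applying \eqref{eq:sigma_UT} one cannot absorb stray composite shifts. Thus the main (minor) obstacle is bookkeeping the interplay of $U$ and $T$ at the subscript level versus the operator level.
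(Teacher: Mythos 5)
Your proof is correct and uses essentially the same ingredients as the paper's: the abelianness of $\LR{X}$, the fact that $U$ and $T$ are commuting automorphisms (so they conjugate $\sigma_x,\tau_x,L_x,\mathbf{R}_x$ by shifting subscripts), and identities \eqref{eq:LU} and \eqref{eq:sigma_UT}. The paper computes each identity directly by substituting $\sigma_x=L_xU^{-1}$, $\tau_x=\mathbf{R}_xT^{-1}$ and commuting factors, whereas you first extract the three ``master'' relations $\sigma_x\sigma_{U(y)}=\sigma_y\sigma_{U(x)}$, $\tau_x\tau_{T(y)}=\tau_y\tau_{T(x)}$, $\sigma_x\tau_{U(y)}=\tau_y\sigma_{T(x)}$ and then specialize -- a mild reorganization of the same argument, not a different route.
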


\begin{proof}
    Firstly, 
    \begin{align*}
        \sigma_x^{-1}\sigma_y=UL_x^{-1}L_yU^{-1}=
    UL_yL_{x}^{-1}U^{-1}=L_{U(y)}UU^{-1}L_{U(x)}^{-1}=\sigma_{U(y)}\sigma_{U(x)}^{-1},
    \end{align*}
    and analogously for $\tau$.\\   
    Secondly, 
    \begin{align*}
        &\sigma_{x}\tau_y=L_{x}U^{-1}\mathbf{R}_yT^{-1}=L_{x}\mathbf{R}_{U^{-1}(y)}U^{-1}T^{-1}
    =\mathbf{R}_{U^{-1}(y)}L_{T^{-1}T(x)}T^{-1}U^{-1}=\\
    &\mathbf{R}_{T(y)}T^{-1}L_{T(x)}U^{-1}
    =\tau_{T(y)}\sigma_{T(x)}.
    \end{align*}
    Thirdly, 
    \begin{align*}
    &\sigma_{x}\tau_y^{-1}=L_{x}U^{-1}T\mathbf{R}_y^{-1}=L_{x}\mathbf{R}_{U^{-1}T(y)}^{-1}U^{-1}T
    =\mathbf{R}_{TU^{-1}(y)}^{-1}L_{TT^{-1}(x)}TU^{-1}=\\
    &T\mathbf{R}^{-1}_{U^{-1}(y)}L_{T^{-1}(x)}U^{-1}
    =\tau_{T(y)}^{-1}\sigma_{U(x)}.
    \end{align*}
    Lastly, 
    \begin{align*}\sigma_x\sigma_y=L_x U^{-1}L_y U^{-1}=L_xL_{U^{-1}(y)}U^{-2}=L_{U^{-1}(y)}U^{-1}L_{U(x)}U^{-1}=\sigma_{T(y)}\sigma_{U(x)},
    \end{align*}
    and analogously for~$\tau$.
\end{proof}

Since $\GD{X}$ is generated by $\LR{X}$ and $\D{X}$
and since $\LR{X}$ is a normal subgroup of $\GD{X}$, we obtain:

\begin{lemma}\label{lem:UTLR}
For every  $\varphi\in\GD{X}$ there are $\alpha,\alpha'\in \LR{X}$ 
and $r,s\in \mathbb{Z}$ such that $\varphi=U^r T^s\alpha=\alpha' U^rT^s$.
\end{lemma}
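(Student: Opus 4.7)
The plan is to unpack the definitions and exploit the two structural facts already proved: $\LR{X}$ is an abelian normal subgroup of $\GD{X}$, and $U,T$ are commuting automorphisms of $(X,\sigma,\tau)$ (so $\D{X}=\langle U,T\rangle$ is abelian). Together these reduce the statement to a product decomposition $\GD{X}=\LR{X}\cdot \D{X}$ together with a little conjugation trick to switch the order.

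First, I would check that $\GD{X}=\LR{X}\cdot\D{X}$. Every generator of $\GD{X}$ lies in this product: $U,T\in\D{X}$, and for each $x\in X$ we have $\sigma_x=(\sigma_xU)U^{-1}=L_xU^{-1}\in\LR{X}\cdot\D{X}$ and similarly $\tau_x=\mathbf{R}_xT^{-1}\in\LR{X}\cdot\D{X}$. Because $\LR{X}$ is normal in $\GD{X}$, the complex product $\LR{X}\cdot\D{X}$ is a subgroup, and since it contains all generators of $\GD{X}$ it equals $\GD{X}$.

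Next, since $U$ and $T$ commute, any element of $\D{X}=\langle U,T\rangle$ has the form $U^rT^s$ with $r,s\in\mathbb{Z}$. Hence every $\varphi\in\GD{X}$ admits a presentation $\varphi=\alpha'\,U^rT^s$ with $\alpha'\in\LR{X}$. To get the other form with the same $r,s$, use normality of $\LR{X}$ in $\GD{X}$: set
\[
\alpha=(U^rT^s)^{-1}\alpha'(U^rT^s)\in\LR{X},
\]
so that $\varphi=\alpha'\,U^rT^s=U^rT^s\,\alpha$, as required.

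There is essentially no obstacle here — the lemma is a direct bookkeeping consequence of Proposition~3 (normality of $\LR{X}$ in $\GD{X}$), Corollary~3.7 (commutativity of $U$ and $T$ on a $2$-permutational solution), and the definitions of the three groups. The only point worth stating explicitly is that the $r,s$ in the two presentations can be chosen identical; this is precisely what the conjugation step above ensures.
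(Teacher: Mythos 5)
Your proof is correct and follows the same route the paper intends: the paper justifies this lemma in one line by observing that $\GD{X}$ is generated by $\LR{X}$ and $\D{X}$ and that $\LR{X}$ is normal in $\GD{X}$, which is exactly the decomposition $\GD{X}=\LR{X}\cdot\D{X}$ you spell out, combined with commutativity of $U$ and $T$ and a conjugation to pass $\alpha'$ across $U^rT^s$. Your write-up simply makes explicit the bookkeeping the paper leaves to the reader.
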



Directly by Lemma \ref{lem:UTLR} and Proposition \ref{Pr:DQ} we obtain
\begin{lemma}\label{rem:dis}
    For any  $\varphi\in\GD{X}$ and any $x\in X$, there exist $\alpha\in \dis{X}$ and $r,s\in \mathbb{Z}$ 
 such that
\begin{align*}
&\varphi(x)=
U^rT^s\alpha(x).
\end{align*}
\end{lemma}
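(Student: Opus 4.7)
The plan is to chain Lemma \ref{lem:UTLR} with Proposition \ref{Pr:DQ}. First I would apply Lemma \ref{lem:UTLR} to write $\varphi = U^r T^s \alpha'$ for some $\alpha' \in \LR{X}$ and integers $r,s$; this gives the decomposition at the level of the group. Evaluating at $x$ yields $\varphi(x) = U^r T^s \alpha'(x)$.

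Next I would invoke Proposition \ref{Pr:DQ}, which asserts that the $\LR{X}$-orbit and the $\dis{X}$-orbit of $x$ coincide. Since $\alpha'(x)$ lies in the $\LR{X}$-orbit of $x$, there exists $\alpha \in \dis{X}$ with $\alpha(x) = \alpha'(x)$. Substituting back gives
\[
\varphi(x) = U^r T^s \alpha'(x) = U^r T^s \alpha(x),
\]
as desired.

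There is no real obstacle here: the two ingredients are already in hand. The only thing to note is that the equality $\alpha(x) = \alpha'(x)$ is pointwise (not an equality of group elements), which is exactly why the statement of the lemma is phrased for a fixed $x$ rather than as a factorization of $\varphi$ itself inside $\GD{X}$.
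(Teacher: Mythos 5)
Your proposal is correct and matches the paper's own argument, which derives the lemma directly from Lemma~\ref{lem:UTLR} and Proposition~\ref{Pr:DQ} in exactly the way you describe. Your closing remark about the equality $\alpha(x)=\alpha'(x)$ being pointwise rather than a factorization in $\GD{X}$ is a valid and worthwhile clarification.
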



Let us now choose~$\tilde 0\in X$ arbitrarily.
The choice of the element $\tilde 0$ might
be of some relevance for decomposable solutions but does not matter at all for indecomposable solutions because, as we shall see in Proposition~\ref{prop:aut-regular}, the automorphism group of such solutions is transitive. Let us denote $\tilde i=T^i(\tilde 0)$, for each $i\in\Z$. The set $\{\tilde i \mid i \in\Z\}$ is a subset of X only but it turns out that it may be an important subset of~$X$. 

\begin{lemma}\label{lem:3.4}
    For all $x\in X$, $i,j,k,l\in\Z$,
    \begin{align}
       \sigma^k_{\tilde i}\sigma^l_{\tilde j} \label{eq:3.9}&=\sigma_{\widetilde{j+k}}^l\sigma^k_{\widetilde{i-l}}, &
        \sigma^k_{\tilde i}\tau^l_{\tilde j}&=\tau_{\widetilde{j+k}}^l\sigma^k_{\widetilde{i+l}}, \\
        \tau^k_{\tilde i}\sigma^l_{\tilde j}&=\sigma_{\widetilde{j-k}}^l\tau^k_{\widetilde{i-l}}, &
        \tau^k_{\tilde i}\tau^l_{\tilde j}&=\tau_{\widetilde{j-k}}^l\tau^k_{\widetilde{i+l}},\\
     \sigma_{\tilde i}&=\sigma_{\tilde 1}^i\sigma_{\tilde 0}^{1-i},&
        \tau_{\tilde i}&=\tau_{\tilde 0}^{1-i}\tau_{1}^{i}. \label{eq:3.10}
   \end{align}
\end{lemma}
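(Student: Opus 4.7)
The plan is to reduce everything to computations in the abelian group $\LR{X}$ via the $(U,T)$-isotope. Recall that $L_x=\sigma_xU$ and $\mathbf{R}_x=\tau_xT$ generate the abelian group $\LR{X}$ (Corollary~\ref{corol:LR-abelian}), that $U,T$ are commuting automorphisms of $(X,\sigma,\tau)$, hence $\varphi L_x\varphi^{-1}=L_{\varphi(x)}$ and $\varphi\mathbf{R}_x\varphi^{-1}=\mathbf{R}_{\varphi(x)}$ for every $\varphi\in\langle U,T\rangle$, and that $L_{UT(x)}=L_x$, $\mathbf{R}_{UT(x)}=\mathbf{R}_x$ by \eqref{eq:sigma_UT}. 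Iterating this last property on an arbitrary element of the form $y=U^aT^b(\tilde 0)$ gives the key observation $L_y=L_{\widetilde{b-a}}$ and $\mathbf{R}_y=\mathbf{R}_{\widetilde{b-a}}$, so the values of $L$ and $\mathbf{R}$ on any $\langle U,T\rangle$-iterate of $\tilde 0$ are controlled by a single integer index.

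First I would establish, by induction on $k\geq 0$, the normal forms
\[
\sigma_{\tilde i}^k=L_{\tilde i}L_{\widetilde{i+1}}\cdots L_{\widetilde{i+k-1}}\,U^{-k},\qquad
\tau_{\tilde j}^l=\mathbf{R}_{\tilde j}\mathbf{R}_{\widetilde{j-1}}\cdots\mathbf{R}_{\widetilde{j-l+1}}\,T^{-l},
\]
in each step pushing one $U^{-1}$ (resp.\ $T^{-1}$) past an $L$ (resp.\ $\mathbf{R}$) via the conjugation rule and simplifying the resulting subscript through the key observation; the corresponding formulas for negative exponents follow by inversion. Plugging these normal forms into both sides of each of the four product identities and then moving every $U^{\pm k}$ and $T^{\pm l}$ to the far right by the same conjugation rule reduces each side to a product of $L$'s and $\mathbf{R}$'s multiplied by a fixed power of $U$ and $T$, and a direct inspection shows both sides contain the same multiset of $L$'s and $\mathbf{R}$'s. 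The equality then follows from commutativity in $\LR{X}$.

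The last two identities are immediate specialisations of the first and fourth product identities. In the first, set $k=1$, $l=i-1$, $j=0$ to obtain $\sigma_{\tilde i}\sigma_{\tilde 0}^{i-1}=\sigma_{\tilde 1}^{i-1}\sigma_{\tilde 1}=\sigma_{\tilde 1}^i$, hence $\sigma_{\tilde i}=\sigma_{\tilde 1}^i\sigma_{\tilde 0}^{1-i}$; symmetrically, in the fourth take subscripts $0,1$ and exponents $1-i,i$ to get $\tau_{\tilde 0}^{1-i}\tau_{\tilde 1}^{i}=\tau_{\tilde i}^{i}\tau_{\tilde i}^{1-i}=\tau_{\tilde i}$. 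The main obstacle is purely the index bookkeeping---checking that the various $\pm k$, $\pm l$, $\pm j$ shifts produced by the conjugation rule agree with those prescribed on the right-hand sides, and handling the negative-exponent case uniformly by taking inverses.
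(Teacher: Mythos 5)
Your proof is correct, and it rests on the same machinery as the paper's: the decomposition $\sigma_x=L_xU^{-1}$, $\tau_x=\mathbf{R}_xT^{-1}$, the abelianness of $\LR{X}$, the conjugation rule $\varphi L_x\varphi^{-1}=L_{\varphi(x)}$ for $\varphi\in\D{X}$, and the shift identity coming from \eqref{eq:sigma_UT}. The organization differs, though. The paper first proves the single-step commutation relations of Lemma~\ref{lm:relTU} (exactly by the normal-form manipulation you describe, but only for exponent $1$) and then obtains the four product identities ``inductively'' from them; you instead write down closed-form normal forms for arbitrary powers, e.g.
\[
\sigma_{\tilde i}^k=L_{\tilde i}L_{\widetilde{i+1}}\cdots L_{\widetilde{i+k-1}}\,U^{-k},
\]
push the $U^{\pm k},T^{\pm l}$ to the right, and compare multisets of $L$'s and $\mathbf{R}$'s in the abelian group $\LR{X}$. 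I checked the bookkeeping in the pure-$\sigma$ and mixed cases and it comes out right; the negative-exponent case does need the inverted normal form $\sigma_{\tilde i}^{-k}=L_{\widetilde{i-1}}^{-1}\cdots L_{\widetilde{i-k}}^{-1}U^{k}$ spelled out, but that is routine. Where you genuinely diverge from the paper is \eqref{eq:3.10}: the paper runs a two-case induction on $i$ (separately for $i>1$ and $i<0$) using \eqref{eq:3.9}, whereas you get it in one line by specializing \eqref{eq:3.9} to $k=1$, $l=i-1$, $j=0$, giving $\sigma_{\tilde i}\sigma_{\tilde 0}^{i-1}=\sigma_{\tilde 1}^{i-1}\sigma_{\tilde 1}=\sigma_{\tilde 1}^{i}$, and dually for $\tau$. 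That specialization is cleaner and handles all integers $i$ uniformly, at the price of having first established \eqref{eq:3.9} for all integer exponents rather than just the ones needed for the induction.
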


\begin{proof}
    The first four identities are obtained inductively using Lemma \ref{lm:relTU}. 
    Now \eqref{eq:3.10} is trivially true for $i\in\{0,1\}$. If $i>1$ then, using an induction on~$i$,
    \[
    \sigma_{\tilde i}=\sigma_{\tilde i}\sigma_{\tilde 0}\sigma_{\tilde 0}^{-1}
  \stackrel{\eqref{eq:3.9}}=
  \sigma_{\tilde 1}\sigma_{\widetilde{i-1}}\sigma_{\tilde 0}^{-1}=\sigma_{\tilde 1}\sigma_{\tilde 1}^{i-1}\sigma_{\tilde 0}^{2-i}\sigma_{\tilde 0}^{-1}=\sigma_{\tilde 1}^i\sigma_{\tilde 0}^{1-i}.
    \]
    If $i<0$ then
    \[
    \sigma_{\tilde i}=\sigma_{\tilde i}\sigma_{\tilde 0}^{-1}\sigma_{\tilde 0}\stackrel{\eqref{eq:3.9}}=
    \sigma_{\tilde 1}^{-1}\sigma_{\widetilde{i+1}}\sigma_{\tilde 0}
    =\sigma_{\tilde 1}^{-1}\sigma_{\tilde 1}^{i+1}\sigma_{\tilde 0}^{-i}\sigma_{\tilde 0}=
    \sigma_{\tilde 1}^i\sigma_{\tilde 0}^{1-i}.
    \]
    The calculation for~$\tau$ is analogous.
\end{proof}

\begin{lemma}
    Let $(X,\sigma,\tau)$ be an indecomposable solution of multipermutation level at most~$2$ such that $U$ or $T$ are identity 
    permutations. Then the multipermutation level is actually~$1$.
\end{lemma}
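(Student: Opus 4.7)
The plan is to use the multipermutation-level-$2$ hypothesis (which makes $U,T$ automorphisms and puts $\dis{X}\le\LR{X}\lhd\GD{X}$) to show that, under $U=\id$, the permutations $\sigma_x$ and $\tau_x$ are constant along every $\GD{X}$-orbit; indecomposability then forces them to be globally constant, which is precisely multipermutation level~$1$.

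Assume without loss of generality that $U=\id$ (the case $T=\id$ is symmetric, swapping the roles of $U,\sigma$ with $T,\tau$). Substituting $U=\id$ in \eqref{eq:sigma_UT} immediately gives $\sigma_{T(x)}=\sigma_x$ and $\tau_{T(x)}=\tau_x$ for every $x\in X$, and a one-line induction in both directions extends this to
$\sigma_{T^s(x)}=\sigma_x$ and $\tau_{T^s(x)}=\tau_x$ for every $s\in\Z$.

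Next I would invoke Lemma~\ref{lem:alpha}, which provides $\sigma_{\alpha(x)}=\sigma_x$ and $\tau_{\alpha(x)}=\tau_x$ for every $\alpha\in\LR{X}$, together with Lemma~\ref{rem:dis}, which factors each $\varphi\in\GD{X}$ at any chosen $x\in X$ as $\varphi(x)=U^rT^s\alpha(x)$ with $\alpha\in\dis{X}\le\LR{X}$. Under $U=\id$ this collapses to $\varphi(x)=T^s\alpha(x)$, and chaining the two invariances gives $\sigma_{\varphi(x)}=\sigma_x$ and $\tau_{\varphi(x)}=\tau_x$ for every $\varphi\in\GD{X}$ and every $x\in X$.

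Finally, indecomposability of $(X,\sigma,\tau)$ makes $\mathcal{G}(X)$, and \textit{a fortiori} $\GD{X}$, act transitively on $X$. For any $x,y\in X$, picking $\varphi\in\GD{X}$ with $\varphi(x)=y$ and applying the previous step yields $\sigma_y=\sigma_x$ and $\tau_y=\tau_x$, hence $|\mathrm{Ret}(X,\sigma,\tau)|=1$, i.e.\ the multipermutation level is~$1$. I do not foresee a real obstacle; the only step deserving a second glance is the legitimacy of Lemma~\ref{rem:dis}, which is fine because the $2$-permutational hypothesis (needed to guarantee that $\LR{X}$ is abelian and has the same orbits as $\dis{X}$) is the running assumption of this section.
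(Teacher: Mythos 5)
Your proof is correct and follows essentially the same route as the paper's: decompose an arbitrary transitive translate via Lemma~\ref{rem:dis} as $U^rT^s\alpha(x)$ with $\alpha\in\dis{X}$, kill the $T^s$ factor using \eqref{eq:sigma_UT} with $U=\id$, and kill the $\alpha$ factor using Lemma~\ref{lem:alpha} (i.e.\ \eqref{eq:2-red-dis}). The only cosmetic difference is that you spell out the induction on powers of $T$ that the paper compresses into the single identity $\sigma_{T^j\alpha(x)}=\sigma_{U^{-j}\alpha(x)}$.
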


\begin{proof}
Suppose $U=\mathrm{id}_X$. Let us take $x,y\in X$.
Because of indecomposability there exists $\varphi\in\mathcal{G}(X)$ such that $\varphi(x)=y$. According to Lemma~\ref{rem:dis},  $y=U^iT^j\alpha(x)=T^j\alpha(x)$, for some $\alpha\in\dis{X}$ and $i,j\in \Z$. Then
$\sigma_{y}=\sigma_{T^j\alpha(x)}\stackrel{\eqref{eq:sigma_UT}}=\sigma_{U^{-j}\alpha(x)}=\sigma_{\alpha(x)}\stackrel{\eqref{eq:2-red-dis}}=\sigma_x$. The same argument can be used for $\tau$. Proof for $T=\mathrm{id}_X$ is analogous.
\end{proof}

\begin{prop}\label{prop:gen}
The following holds for an indecomposable solution $(X,\sigma,\tau)$ of multipermutation level~$2$:
    \begin{enumerate}
        \item[(i)] The group $\mathcal{G}(X)$ is generated by $\{\sigma_{\tilde 0},\sigma_{\tilde 1},\tau_{\tilde 0},\tau_{\tilde 1}\}$.
        \item[(ii)] The group $\dis{X}$ is generated by $\{\sigma_{\tilde i}\sigma_{\tilde 0}^{-1} \mid i\in \Z\}\cup \{\tau_{\tilde i}\tau_{\tilde 0}^{-1} \mid i\in \Z\}$.
        \item[(iii)] The group $\LR{X}$ is generated by $\{\sigma_{\tilde i}\sigma_{\tilde 0}^{-1} \mid i\in \Z\}\cup \{\tau_{\tilde i}\tau_{\tilde 0}^{-1} \mid i\in \Z\} \cup \{\sigma_{\tilde 0}U,\tau_{\tilde 0}T\}$.
        \item[(iv)] $\mathcal{G}(X)=\langle \dis{X}\cup\{\sigma_{\tilde 0},\tau_{\tilde 0}\}\rangle$.
    \end{enumerate}
\end{prop}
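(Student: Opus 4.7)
The plan is to prove (i) first by establishing the following collapse statement, from which (ii)--(iv) follow quickly: for every $x\in X$ there is an integer $k=k(x)$ such that $\sigma_x=\sigma_{\tilde k}$ and $\tau_x=\tau_{\tilde k}$.

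To establish this collapse, I would fix $x\in X$ and use indecomposability to pick $\varphi\in\mathcal{G}(X)\subseteq\GD{X}$ with $\varphi(\tilde 0)=x$. Lemma~\ref{rem:dis} then supplies $\alpha\in\dis{X}$ and $r,s\in\Z$ such that $x=U^rT^s\alpha(\tilde 0)$. Since $\dis{X}$ is normal in $\GD{X}$, the conjugate $\alpha':=(U^rT^s)\alpha(U^rT^s)^{-1}$ lies in $\dis{X}\subseteq\LR{X}$, and setting $y:=U^rT^s(\tilde 0)$ we obtain $x=\alpha'(y)$. Lemma~\ref{lem:alpha} then forces $\sigma_x=\sigma_y$ and $\tau_x=\tau_y$. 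Iterating \eqref{eq:LU} (and using that $U$ and $T$ commute) gives $\sigma_{U^r(z)}=\sigma_{T^{-r}(z)}$ and $\tau_{U^r(z)}=\tau_{T^{-r}(z)}$ for every $z\in X$ and every $r$, so $\sigma_x=\sigma_{T^{s-r}(\tilde 0)}=\sigma_{\widetilde{s-r}}$ and symmetrically $\tau_x=\tau_{\widetilde{s-r}}$. By \eqref{eq:3.10} both of these already belong to $\langle\sigma_{\tilde 0},\sigma_{\tilde 1},\tau_{\tilde 0},\tau_{\tilde 1}\rangle$, proving (i).

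Now (ii)--(iv) follow easily. For (ii), each generator $\sigma_x\sigma_y^{-1}$ of $\dis{X}$ factors as $(\sigma_x\sigma_{\tilde 0}^{-1})(\sigma_y\sigma_{\tilde 0}^{-1})^{-1}$, and the collapse from (i) turns each $\sigma_x\sigma_{\tilde 0}^{-1}$ into some $\sigma_{\tilde k}\sigma_{\tilde 0}^{-1}$; the $\tau$-side is symmetric. For (iii), the identities $L_x=(\sigma_x\sigma_{\tilde 0}^{-1})(\sigma_{\tilde 0}U)$ and $\mathbf{R}_x=(\tau_x\tau_{\tilde 0}^{-1})(\tau_{\tilde 0}T)$ show that the generators from (ii), together with $\sigma_{\tilde 0}U$ and $\tau_{\tilde 0}T$, reach every $L_x$ and every $\mathbf{R}_x$. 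For (iv), $\sigma_x=(\sigma_x\sigma_{\tilde 0}^{-1})\sigma_{\tilde 0}$ lies in $\langle\dis{X}\cup\{\sigma_{\tilde 0}\}\rangle$ and similarly for $\tau_x$; the reverse inclusion is trivial. The main obstacle is the calculation in (i), which succeeds precisely because three structural facts of $2$-permutational solutions line up: the normality of $\dis{X}$ in $\GD{X}$ (to move $\alpha$ past $U^rT^s$), Lemma~\ref{lem:alpha} (to absorb $\alpha'$ into $\sigma_y$ and $\tau_y$), and \eqref{eq:LU} (to trade $U^r$ for $T^{-r}$ and so bring $y$ into the subset $\{\tilde k : k\in\Z\}$).
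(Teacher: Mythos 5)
Your proof is correct and follows essentially the same route as the paper: indecomposability plus Lemma~\ref{rem:dis} to write $x=U^rT^s\alpha(\tilde 0)$, then the invariance of $\sigma_x,\tau_x$ under $\LR{X}$ and the $U\leftrightarrow T^{-1}$ trade to collapse to $\sigma_{\widetilde{s-r}},\tau_{\widetilde{s-r}}$, and finally \eqref{eq:3.10}. You merely spell out the handling of $\alpha$ (normality of $\dis{X}$ plus Lemma~\ref{lem:alpha}) and the derivation of (ii)--(iv), which the paper compresses into ``the rest then follows.''
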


\begin{proof}
    Let $x\in X$ be choosen arbitrarily. Since $(X,\sigma,\tau)$ is indecomposable, there exists
    $\varphi\in\mathcal{G}(X)$ such that $x=\varphi(\tilde 0)$. According to Lemma \ref{rem:dis}, there exist $r,s\in \Z$ and $\alpha\in\dis{X}$ such that $\varphi(\tilde 0)=U^rT^s\alpha(\tilde 0)$. Now, by Lemma \ref{lem:UTLR}
    \[\sigma_x=\sigma_{\varphi(\tilde 0)}=\sigma_{U^rT^s\alpha(\tilde 0)}
    \stackrel{\eqref{eq:sigma_UT}}=\sigma_{T^{s-r}(\tilde 0)}=\sigma_{\widetilde{s-r}}\stackrel{\eqref{eq:3.10}}=\sigma_{\tilde 1}^{s-r}\sigma_{\tilde 0}^{1-s+r}
    \]
    and analogously for ~$\tau_x$. 
    The rest then follows.
\end{proof}

\begin{corollary}
    An indecomposable solution of multipermutation level~$2$ is generated by a single element which can be chosen arbitrarily.
\end{corollary}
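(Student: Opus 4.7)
The plan is that this corollary should follow almost immediately from Proposition~\ref{prop:gen}(i). Fix any $x\in X$ and set $\tilde 0:=x$; this is legitimate because the choice of $\tilde 0$ in Proposition~\ref{prop:gen} was arbitrary. Let $Y\subseteq X$ denote the sub-solution generated by $\{x\}$, interpreted, as is standard in the non-degenerate setting, as the smallest subset of $X$ containing $x$ and closed under the bijections $\sigma_a^{\pm 1}$ and $\tau_a^{\pm 1}$ for every $a\in Y$. The goal is then to show $Y=X$.

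The key observation is that $\tilde 1=T(\tilde 0)=\tau_{\tilde 0}^{-1}(\tilde 0)$ lies in $Y$ for the trivial reason that $\tilde 0\in Y$ and $Y$ is closed under $\tau_{\tilde 0}^{-1}$. With both $\tilde 0$ and $\tilde 1$ in $Y$, each of the four permutations $\sigma_{\tilde 0},\sigma_{\tilde 1},\tau_{\tilde 0},\tau_{\tilde 1}$ (and their inverses) stabilises $Y$ setwise. By Proposition~\ref{prop:gen}(i) these four permutations generate $\mathcal{G}(X)$, hence $\mathcal{G}(X)$ itself stabilises $Y$, so $Y$ is a union of $\mathcal{G}(X)$-orbits. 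Indecomposability of $(X,\sigma,\tau)$ says that $\mathcal{G}(X)$ acts transitively on $X$, and since $\tilde 0\in Y$ this unique orbit is all of $X$, forcing $Y=X$. Because $x$ was chosen arbitrarily, every element of $X$ generates the whole solution.

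The only real subtlety is the convention for ``sub-solution generated by a set''—whether it automatically includes closure under $\sigma_a^{-1},\tau_a^{-1}$. This inclusion is standard for non-degenerate solutions (and automatic in the finite case, since each $\sigma_a$ and $\tau_a$ then has finite order, so its inverse is one of its positive powers). Once that convention is fixed, the argument reduces to exactly the two steps above: verify $\tilde 1\in Y$ by a single inverse application, then combine the group-theoretic generation statement from Proposition~\ref{prop:gen}(i) with transitivity of $\mathcal{G}(X)$. No computation is required, which is why this is truly a corollary rather than a separate theorem.
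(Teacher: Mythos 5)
Your proof is correct and follows essentially the same route as the paper: observe that $\tilde 1=T(\tilde 0)=\tau_{\tilde 0}^{-1}(\tilde 0)$ is reachable from $\tilde 0$, invoke Proposition~\ref{prop:gen}(i) to see that the subsolution generated by $\tilde 0$ is stable under all of $\mathcal{G}(X)$, and conclude by transitivity. You merely spell out the closure argument that the paper leaves implicit.
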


\begin{proof}
At first note that $\tilde 1=T(\tilde 0)=T\mathbf{R}^{-1}_{\tilde 0}({\tilde 0})=\tau^{-1}_{{\tilde 0}}({\tilde 0})$. Then, by Proposition \ref{prop:gen}, the group
$\mathcal{G}(X)$ is generated by $\sigma_{\tilde 0}$, $\sigma_{\tau^{-1}_{\tilde 0}(\tilde 0)}$, $\tau_{\tilde 0}$ and $\tau_{\tau^{-1}_{\tilde 0}(\tilde 0)}$. The choice of the element~$\tilde 0$ is arbitrary.    
\end{proof}

Further, using a direct computation, we obtain:
\begin{corollary}
\label{corol:semi-regular}
    Let $(Y, \lambda, \rho )$ and $(X, \sigma, \tau)$ be two indecomposable solutions of multipermutation level~$2$. Let $\Phi_1 : Y \to X$ and $\Phi_2 : Y \to X$ be two homomorphisms of the solutions. If there exists $e \in Y$ such that $\Phi_1 (e) = \Phi_2 (e)$ then $\Phi_1 = \Phi_2$.
\end{corollary}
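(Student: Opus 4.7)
The plan is to introduce the agreement set $S:=\{y\in Y:\Phi_1(y)=\Phi_2(y)\}$ and prove $S=Y$, by combining a closure property of $S$ with the single-element generation of indecomposable $2$-permutational solutions established in the preceding corollary.

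First, I would verify that $S$ is closed under all four partial operations $\lambda_z^{\pm 1}$ and $\rho_z^{\pm 1}$ whenever $z\in S$. This is a direct consequence of the homomorphism identities $\Phi_i\lambda_z=\sigma_{\Phi_i(z)}\Phi_i$ and $\Phi_i\rho_z=\tau_{\Phi_i(z)}\Phi_i$: if $z,y\in S$ then $\Phi_1(z)=\Phi_2(z)$ and $\Phi_1(y)=\Phi_2(y)$, hence
$$\Phi_1(\lambda_z(y))=\sigma_{\Phi_1(z)}\Phi_1(y)=\sigma_{\Phi_2(z)}\Phi_2(y)=\Phi_2(\lambda_z(y)),$$
and analogously for $\rho_z$. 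The inverse operations are handled by rewriting the homomorphism identity as $\Phi_i\lambda_z^{-1}=\sigma_{\Phi_i(z)}^{-1}\Phi_i$, which is legitimate since $\sigma_{\Phi_i(z)}$ and $\tau_{\Phi_i(z)}$ are bijections of $X$.

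Starting from $e\in S$ (the hypothesis), one application of the closure with $z=y=e$ yields $\tilde 1:=\tau_e^{-1}(e)\in S$; set $\tilde 0:=e$. By Proposition~\ref{prop:gen}(i), $\mathcal{G}(Y)$ is generated by $\{\lambda_{\tilde 0},\lambda_{\tilde 1},\rho_{\tilde 0},\rho_{\tilde 1}\}$, and by indecomposability of $Y$ every $y\in Y$ can be written as $\varphi(e)$ for some word $\varphi$ in these generators and their inverses. Every subscript appearing in such a word lies in $\{\tilde 0,\tilde 1\}\subseteq S$, so a short induction on the length of $\varphi$ using the closure property yields $y\in S$. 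Hence $S=Y$ and $\Phi_1=\Phi_2$.

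There is no serious obstacle: once the closure step is in place, everything reduces to the single-element generation already in hand. The only point that requires a moment's care is the order of the construction—one must first produce $\tilde 1$ from $\tilde 0=e$ via $\tau_{\tilde 0}^{-1}$ before the closure induction with subscripts in $\{\tilde 0,\tilde 1\}$ can begin.
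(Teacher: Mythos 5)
Your argument is correct and is essentially the one the paper intends: the corollary is stated immediately after the single-element generation result, and the paper's ``direct computation'' is precisely your closure-of-the-agreement-set induction driven by Proposition~\ref{prop:gen}(i). The only cosmetic issue is that you write $\tau_e^{-1}(e)$ for an element of $Y$ where the operations are $\lambda,\rho$, so it should read $\rho_e^{-1}(e)=T_Y(e)$; this does not affect the argument.
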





\begin{lemma}\label{lm:center}
Let $(X,\sigma,\tau)$ be an indecomposable solution of multipermutation level~$2$ and let $\upsilon\in\D{X}$ be such that there exists
$\alpha\in\LR{X}$ that $\upsilon\alpha(\tilde 0)=\tilde 0$. Then $\upsilon\in Z(\GD{X})$.
\end{lemma}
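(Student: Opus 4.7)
My plan is to show that $\upsilon$ commutes with every generator of $\GD{X}$. Writing $\upsilon = U^rT^s$, set $m := s-r$. Since $U$ and $T$ commute, $\upsilon$ commutes with both of them automatically; and since $\upsilon\in\D{X}$ is an automorphism of $(X,\sigma,\tau)$, commutation with $\sigma_x$ and $\tau_x$ reduces to the two identities $\sigma_{\upsilon(x)}=\sigma_x$ and $\tau_{\upsilon(x)}=\tau_x$ for every $x\in X$. Thus the whole problem amounts to verifying these two.

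The first ingredient is the reduction formula $\sigma_{U^aT^b(y)}=\sigma_{T^{b-a}(y)}$, together with its analogue for $\tau$, obtained by iterating~\eqref{eq:LU} (using that $U$ and $T$ commute, which also handles negative exponents via a substitution). For an arbitrary $x\in X$, indecomposability together with Lemma~\ref{rem:dis} gives $x=U^iT^j\beta(\tilde 0)$ for some $i,j\in\Z$ and $\beta\in\dis{X}$; pushing $\beta$ to the outside via normality of $\dis{X}$ in $\GD{X}$, discarding it via Lemma~\ref{lem:alpha}, and then applying the reduction formula, one gets $\sigma_x=\sigma_{\widetilde{j-i}}$ and, analogously, $\sigma_{\upsilon(x)}=\sigma_{\widetilde{j-i+m}}$. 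By formula~\eqref{eq:3.10}, the identity $\sigma_{\widetilde{n+m}}=\sigma_{\tilde n}$ holding for all $n\in\Z$ is equivalent to the single power relation $\sigma_{\tilde 0}^m=\sigma_{\tilde 1}^m$, and symmetrically we need $\tau_{\tilde 0}^m=\tau_{\tilde 1}^m$.

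It remains to extract these two power identities from the hypothesis, which is where the assumption $\upsilon\alpha(\tilde 0)=\tilde 0$ enters. It says $\upsilon^{-1}(\tilde 0)=\alpha(\tilde 0)$, and Lemma~\ref{lem:alpha} applied to $\alpha\in\LR{X}$ gives $\sigma_{\upsilon^{-1}(\tilde 0)}=\sigma_{\tilde 0}$ and $\tau_{\upsilon^{-1}(\tilde 0)}=\tau_{\tilde 0}$. By the reduction formula, $\sigma_{\upsilon^{-1}(\tilde 0)}=\sigma_{T^{r-s}(\tilde 0)}=\sigma_{\widetilde{-m}}$ and analogously $\tau_{\upsilon^{-1}(\tilde 0)}=\tau_{\widetilde{-m}}$. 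Substituting into~\eqref{eq:3.10} and simplifying yields precisely $\sigma_{\tilde 0}^m=\sigma_{\tilde 1}^m$ and $\tau_{\tilde 0}^m=\tau_{\tilde 1}^m$. The only care point I anticipate is that $\sigma_{\tilde 0}$ and $\sigma_{\tilde 1}$ (respectively $\tau_{\tilde 0}$ and $\tau_{\tilde 1}$) are not assumed to commute, so the intermediate expressions from~\eqref{eq:3.10} must be handled in the order dictated there; once the targeted relation involves only powers of a single element on each side, no further commutativity is required, and this is not a real obstacle.
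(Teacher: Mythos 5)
Your proof is correct, and it reaches the conclusion by a somewhat different route than the paper. The paper invokes Proposition~\ref{prop:gen}(i) to reduce the whole problem to checking that $\upsilon$ commutes with the four elements $\sigma_{\tilde 0},\sigma_{\tilde 1},\tau_{\tilde 0},\tau_{\tilde 1}$ only; the case of $\sigma_{\tilde 0}$ falls out of the hypothesis combined with Lemma~\ref{lem:alpha} and the relation $\alpha'\upsilon=\upsilon\alpha$, and the case of $\sigma_{\tilde 1}=T\sigma_{\tilde 0}T^{-1}$ reduces to the previous one by conjugating with~$T$. You instead prove the stronger pointwise statement $\sigma_{\upsilon(x)}=\sigma_x$ and $\tau_{\upsilon(x)}=\tau_x$ for \emph{every} $x\in X$, funnelling everything through~\eqref{eq:3.10} to the single power relation $\sigma_{\tilde 0}^m=\sigma_{\tilde 1}^m$ (and its $\tau$-analogue) with $m=s-r$, and then observing that the hypothesis is exactly one instance of the family $\sigma_{\widetilde{n+m}}=\sigma_{\tilde n}$, each instance of which is equivalent to that same power relation. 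Both arguments rest on the same two pillars --- Lemma~\ref{lem:alpha} and the fact that elements of $\D{X}$ are automorphisms, so that commutation with $\sigma_x$ is the identity $\sigma_{\upsilon(x)}=\sigma_x$ --- and your handling of the non-commutativity of $\sigma_{\tilde 0}$ and $\sigma_{\tilde 1}$ (cancelling $\sigma_{\tilde 1}^n$ on the left and $\sigma_{\tilde 0}^{1-n}$ on the right) is sound. The paper's version is shorter because Proposition~\ref{prop:gen} absorbs the reduction; yours is more computational but buys an explicit criterion, namely that $U^rT^s$ is central precisely when $\sigma_{\tilde 0}^{s-r}=\sigma_{\tilde 1}^{s-r}$ and $\tau_{\tilde 0}^{s-r}=\tau_{\tilde 1}^{s-r}$, which is a reusable piece of information not visible in the paper's proof.
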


\begin{proof}
Since $\upsilon$ commutes with $\D{X}$, we prove that it commutes with all the four generators of~$\mathcal{G}(X)$. By Lemma \ref{lem:UTLR} there is $\alpha'\in \LR{X}$ such that $\alpha'\upsilon=\upsilon\alpha$. Then by Lemma \ref{Pr:DQ} we obtain:
    \begin{align*}
        [\upsilon^{-1},\sigma_{\tilde 0}]&=\upsilon \sigma_{\tilde 0}^{-1}\upsilon^{-1} \sigma_{\tilde 0}
        = \sigma_{\upsilon(\tilde 0)}^{-1}\sigma_{\tilde 0}
        = \sigma_{\alpha'\upsilon(\tilde 0)}^{-1}\sigma_{\tilde 0}
        =\sigma_{\upsilon\alpha(\tilde 0)}^{-1}\sigma_{\tilde 0}=\sigma_{\tilde 0}^{-1}\sigma_{\tilde 0}=\mathrm{id}_X,\quad {\rm and}\\     
        [\upsilon^{-1},\sigma_{\tilde 1}]&=\upsilon \sigma_{\tilde 1}^{-1}\upsilon^{-1} \sigma_{\tilde 1}
        =T\upsilon \sigma_{\tilde 0}^{-1}\upsilon^{-1}T^{-1} \sigma_{\tilde 1}=
        T\sigma_{\upsilon(\tilde 0)}^{-1}T^{-1}\sigma_{\tilde 1}=T\sigma_{\alpha'\upsilon(\tilde 0)}^{-1}T^{-1}\sigma_{\tilde 1}=T\sigma_{\tilde 0}^{-1}T^{-1}\sigma_{\tilde 1}=\mathrm{id}_X,   
    \end{align*}
    and analogously for~$\tau$.
\end{proof}

\begin{proposition}\label{prop:aut-regular}
Let $(X,\sigma,\tau)$ be an indecomposable solution of multipermutation level~$2$. Then
the automorphism group of the solution $(X,\sigma,\tau)$ is regular.
\end{proposition}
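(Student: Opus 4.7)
The plan is to verify the two defining properties of a regular action: freeness (semi-regularity) and transitivity.

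Freeness is immediate from Corollary~\ref{corol:semi-regular}. If $\Phi\in\Aut(X,\sigma,\tau)$ satisfies $\Phi(e)=e$ for some $e\in X$, then $\Phi$ and $\id_X$ are two homomorphisms of $(X,\sigma,\tau)$ to itself that agree at $e$, so $\Phi=\id_X$.

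For transitivity, fix $y\in X$ and aim to construct $\Phi\in\Aut(X,\sigma,\tau)$ with $\Phi(\tilde 0)=y$. By indecomposability and Lemma~\ref{rem:dis}, write $y=U^rT^s\alpha(\tilde 0)$ for some $\alpha\in\dis{X}\subseteq\LR{X}$ and integers $r,s$. Since $U,T\in\Aut(X,\sigma,\tau)$, the composition $U^rT^s$ is itself an automorphism. Thus the problem reduces to the following: for each $z$ in the $\LR{X}$-orbit of $\tilde 0$, exhibit an automorphism sending $\tilde 0$ to $z$.

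A direct calculation using $\sigma_x=L_xU^{-1}$, the abelianness of $\LR{X}$, and Lemma~\ref{lem:alpha} (which gives $L_{\beta(x)}=L_x$ for $\beta\in\LR{X}$) shows that $\beta\in\LR{X}$ is an automorphism of $(X,\sigma,\tau)$ precisely when $\beta$ commutes with both $U$ and $T$: indeed, $\beta\sigma_x=L_x\beta U^{-1}$ while $\sigma_{\beta(x)}\beta=L_xU^{-1}\beta$, so equality forces $\beta U=U\beta$, and analogously for $\tau,T$. Because $\GD{X}=\LR{X}\cdot\D{X}$ and $\LR{X}$ is abelian, this condition is equivalent to $\beta\in\LR{X}\cap Z(\GD{X})$. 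The task thus becomes: for each $z\in\LR{X}.\tilde 0$, produce $\beta\in\LR{X}\cap Z(\GD{X})$ with $\beta(\tilde 0)=z$.

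The main obstacle is the existence of such a central $\beta$. Given any $\alpha_0\in\LR{X}$ with $\alpha_0(\tilde 0)=z$, one looks for an element $\gamma$ of the stabilizer $\LR{X}_{\tilde 0}$ so that $\beta=\alpha_0\gamma$ commutes with both $U$ and $T$. By the abelianness of $\LR{X}$, the assignment $\delta\mapsto\delta^{-1}U^{-1}\delta U$ is a homomorphism $\LR{X}\to\LR{X}$, so the required condition is cohomological in nature. This is where Lemma~\ref{lm:center} should enter: it supplies the centrality of $\D{X}$-elements mapping $\tilde 0$ into $\LR{X}.\tilde 0$, which encodes exactly the commutation relations needed to ensure the relevant cohomology class of $\alpha_0$ vanishes on the stabilizer. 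Once $\beta$ is obtained, the automorphism $\Phi=U^rT^s\beta$ satisfies $\Phi(\tilde 0)=y$, completing the proof.
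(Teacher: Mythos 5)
Your treatment of freeness is fine and matches the paper: Corollary~\ref{corol:semi-regular} gives it immediately. Your characterization of which elements of $\LR{X}$ act as automorphisms is also correct: by Lemma~\ref{lem:alpha} and the commutativity of $\LR{X}$, an element $\beta\in\LR{X}$ is an automorphism of $(X,\sigma,\tau)$ precisely when it commutes with $U$ and $T$, i.e.\ when $\beta\in\LR{X}\cap Z(\GD{X})$. The gap is in the last step: the claim that every $z$ in the $\LR{X}$-orbit of $\tilde 0$ is of the form $\beta(\tilde 0)$ for such a central $\beta$ is not merely left unproved by your cohomological sketch --- it is false in general, so no reading of Lemma~\ref{lm:center} can close the argument. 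Take $(Y,\lambda,\rho)=\mathcal{S}(\Z_3^2\times\Z_3^2,\mathbf{c},\mathbf{0})$ with $c_0=0$, $c_1=u$, $c_2=u+v$ for a basis $u,v$ of $G=\Z_3^2$. Every element of $\LR{Y}$ translates the fibre $G\times\{(j,j')\}$ by an amount $g_m=\sum_i k_i c_{i+m}$ depending only on $m=j'-j$, and it commutes with $U$ and $T$ exactly when $g_m$ is independent of $m$; here that forces $k_0=k_1=k_2$, hence a uniform translation by a multiple of $c_0+c_1+c_2=2u+v$. So $\LR{Y}\cap Z(\GD{Y})$ reaches only $3$ of the $9$ points of the $\LR{Y}$-orbit of $(0,0,0)$, although the (regular) automorphism group must reach all of them. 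Your obstruction really is nonzero: with $\alpha_0$ adding $c_{1+m}$ on fibre $m$, solving $f_U(\gamma)=f_U(\alpha_0)^{-1}$ over the stabilizer would require $ku=-v$ in $\Z_3^2$.

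What rescues the proposition is that the needed automorphisms do not lie in $\GD{X}$ at all. The paper defines $\Phi_{k,k',\gamma}$ by $U^jT^{j'}\beta(\tilde 0)\mapsto U^{k}T^{k'}U^jT^{j'}\beta\gamma(\tilde 0)$; identifying $X$ with $\GD{X}/\GD{X}_{\tilde 0}$, this is a \emph{right} translation by $\gamma$, not the natural left action of $\gamma$. Right translations automatically intertwine the left action, which is why the homomorphism property falls out of a short computation, and Lemma~\ref{lm:center} enters exactly where it must: to show the map is well defined on cosets. In the example above, $\Phi_{0,0,\gamma}$ with $\gamma(\tilde 0)=(u,0,0)$ is the uniform translation $(b,j,j')\mapsto(b+u,j,j')$, a permutation of $Y$ that does not belong to $\GD{Y}$. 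To repair your argument you would have to enlarge the pool of candidate automorphisms from $\LR{X}\cap Z(\GD{X})$ to these right translations, at which point you essentially recover the paper's proof.
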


\begin{proof}
    Let $x\in X$. By Lemma \ref{rem:dis} there are $j,j'\in \Z$ and $\beta\in \dis{X}$ such that $x=U^jT^{j'}\beta(\tilde 0)$. Let us define, for all $k,k'\in\Z$ and $\gamma\in\LR{X}$, the mapping $\Phi_{k,k',\gamma}\colon X\to X$ in the following way:
    \[
    \Phi_{k,k',\gamma}(x)=\Phi_{k,k',\gamma} (U^jT^{j'}\beta(\tilde 0))=U^{k}T^{k'}U^jT^{j'}\beta\gamma(\tilde 0).
    \]
    We will prove that this mapping is well-defined. At first note that if, for some $i,i'\in \Z$ and $\alpha\in \dis{X}$, $U^iT^{i'}\alpha(\tilde 0)=U^jT^{j'}\beta(\tilde{0})$ then there is $\beta'\in \dis{X}$ such that $T^{-j'}U^{-j}U^iT^{i'}\beta'\alpha(\tilde 0)=\tilde{0}$. Hence, by Lemma \ref{lm:center}, $T^{i'-j'}U^{i-j}\in Z(\GD{X})$. This implies 
    \begin{multline*}
    \Phi_{k,k',\gamma}(U^jT^{j'}\beta(\tilde{0}))=U^{k+j}T^{k'+j'}\beta\gamma(\tilde 0)=
    U^{k+j}T^{k'+j'}\beta\gamma\beta^{-1}T^{-j'}U^{-j}U^iT^{i'}\alpha(\tilde 0)\\
    =
    U^{k+j}T^{k'+j'}\gamma T^{i'-j'}U^{i-j}\alpha(\tilde 0)=U^{k+j}T^{k'+j'}T^{i'-j'}U^{i-j}\gamma\alpha(\tilde 0)=\Phi_{k,k',\gamma}(U^iT^{i'}\alpha(\tilde 0)).
    \end{multline*}
    Then we will show that the mapping $\Phi_{k,k',\gamma}$ is a homomorphism: 
    \begin{multline*}
        \sigma_{\Phi_{k,k',\gamma}(U^iT^{i'}\alpha(\tilde 0))}\Phi_{k,k',\gamma}(U^jT^{j'}\beta(\tilde 0)) = \sigma_{U^{k+i}T^{k'+i'}\alpha\gamma(\tilde 0)} (U^{k+j}T^{k'+j'}\beta\gamma(\tilde 0))\\
        =\sigma_{\widetilde{k'+i'-k-i}}U^{k+j}T^{k'+j'}\beta\gamma(\tilde 0)
        =U^{k}T^{k'}\sigma_{\widetilde{i'-i}}
        U^jT^{j'}\beta\gamma(\tilde 0)=
        \Phi_{k,k',\gamma}(\sigma_{U^iT^{i'}\alpha(\tilde 0)}U^jT^{j'}\beta(\tilde 0)).
    \end{multline*}
Clearly, $\Phi_{-k,-k',\gamma^{-1}}$ is the inverse automorphism to $\Phi_{k,k',\gamma}$ and since
$\Phi_{k,k',\gamma}(\tilde 0)=U^kT^{k'}\gamma(\tilde 0)$, the automorphism group is transitive. According to Corollary \ref {corol:semi-regular} it is regular.
\end{proof}

Carsten Dietzel pointed to us that Ferran Cedó and Jan Okniński constructed  
a simple (and thus indecomposable) solution of size~$p^2$ 
with the order of its automorphism group coprime to~$p$. Hence, in general, the automorphism group of an indecomposable solution need not be transitive.

\section{A construction of  indecomposable  solutions of multipermutation level 2}\label{sec:constr}

In this section we shall construct a special family of
indecomposable $2$-permutational solutions.
For the sake of compatibility with further sections, the solutions obtained by this construction will be denoted by $(Y,\lambda,\rho)$ and the associated mappings generating the group $\LR{Y}$ will be denoted by stroked letters, that means
\textit{Ł} and $\slR$.

Let us denote the group $(\Z,+,0)$ by $\Z_{\infty}$ which will be helpful for our further considerations. Additionally, in the case $n=\infty$ we assume that $a\equiv b\pmod n$ if and only if $a=b$.

\begin{theorem}\label{th:main2}
Let $n\in \mathbb{N}\cup \{\infty\}$ and 
$(G,+,0)$ be an abelian group.
Further, let 
${\bf c}=(c_i)_{i\in\Z_n}\in G^{\Z_n}$ and ${\bf d}=(d_i)_{i\in\Z_n}\in G^{\Z_n}$ be sequences  
such that $c_{0}=0$,  $d_{0}=0$ and 
the group $(G,+,0)$ is generated by the set ~$\{c_{i}, d_{i}\mid i\in\Z_n\}$.
\noindent
Then $(Y,\lambda,\rho)=(G\times \Z_n^2
,\lambda,\rho)$ with
 \begin{align*} 
 &\lambda_{(a,i,i')}((b,j,j'))=(b+c_{i-i'+j'-j-1},j+1,j')\quad {\rm and}\\
 &\rho_{(a,i,i')}((b,j,j'))=(b+d_{i-i'+j'-j+1},j,j'+1) 
 \end{align*}
is an indecomposable solution of multipermutation level $2$. 

\end{theorem}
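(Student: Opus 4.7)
The plan is to realise $(Y,\lambda,\rho)$ as an isotope of an explicitly described $2$-reductive solution and then to invoke the machinery of Section~3. On the same underlying set $Y=G\times\Z_n^2$ we consider the mappings
\begin{align*}
L_{(a,i,i')}(b,j,j')&=(b+c_{i-i'+j'-j},\,j,\,j'),\\
\mathbf{R}_{(a,i,i')}(b,j,j')&=(b+d_{i-i'+j'-j},\,j,\,j').
\end{align*}
Both depend on $x=(a,i,i')$ only through $i-i'\in\Z_n$ and fix the last two coordinates, so the reductivity identities \eqref{eq:red1}--\eqref{eq:red4} are immediate and the three braid relations reduce, after direct expansion, to the commutativity of~$G$. (Alternatively one may recognise $(Y,L,\mathbf{R})$ as an instance of the affine mesh of Theorem~\ref{thm:affmesh} with index set $\Z_n^2$ and every orbit equal to~$G$.) Hence $(Y,L,\mathbf{R})$ is a $2$-reductive solution.

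We next introduce the commuting permutations $\pi_1(a,i,i')=(a,i+1,i')$ and $\pi_2(a,i,i')=(a,i,i'+1)$ on~$Y$. Each is easily seen to be an automorphism of $(Y,L,\mathbf{R})$, and since $\pi_1\pi_2=\pi_2\pi_1$ preserves the quantity $i-i'$, we have $L_{\pi_1\pi_2(x)}=L_x$ and $\mathbf{R}_{\pi_2\pi_1(x)}=\mathbf{R}_x$. By Lemma~\ref{lem:simple} the isotopy hypotheses \eqref{is1}--\eqref{is2} hold, so Theorem~\ref{th:iso2red} implies that the $(\pi_1,\pi_2)$-isotope of $(Y,L,\mathbf{R})$ is a $2$-permutational solution, that is, of multipermutation level at most~$2$. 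A direct comparison of formulas shows $\lambda_x=L_x\pi_1$ and $\rho_x=\mathbf{R}_x\pi_2$, so this isotope is precisely $(Y,\lambda,\rho)$.

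For indecomposability we would show that the $\mathcal{G}(Y)$-orbit of $(0,0,0)$ is all of~$Y$. By Lemma~\ref{lm:disiso} the displacement group of $(Y,\lambda,\rho)$ coincides with that of $(Y,L,\mathbf{R})$; evaluating $L_xL_y^{-1}$ and $\mathbf{R}_x\mathbf{R}_y^{-1}$ at $(0,0,0)$ yields translations of the first coordinate by arbitrary $c_\alpha-c_\beta$ and $d_\alpha-d_\beta$. Since $c_0=d_0=0$ and $\{c_i,d_i:i\in\Z_n\}$ generates~$G$, these translations already generate all of~$G$, so $\dis{Y}$ alone moves $(0,0,0)$ onto every $(a,0,0)$. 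The generators $\lambda_x$ and $\rho_x$ then increment the second and third coordinates by one, so combining them with the $\dis{Y}$-action reaches every point $(a,j,j')\in Y$. The main computational hurdle is the verification of the braid relations for $(Y,L,\mathbf{R})$ and of the automorphism property of $\pi_1,\pi_2$; in each case the identity reduces, after expansion, to matching indices of the form $i-i'+j'-j+\mathrm{const}$ on the two sides and invoking the commutativity of~$G$.
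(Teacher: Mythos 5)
Your proposal is correct and follows essentially the same route as the paper: the same auxiliary $2$-reductive solution (an affine mesh over $\Z_n^2$ with all fibres equal to $G$), the same commuting automorphisms $\pi_1,\pi_2$ checked against Lemma~\ref{lem:simple} and fed into Theorem~\ref{th:iso2red}, and a transitivity argument that is the paper's explicit computation recast in terms of $\dis{Y}$ via Lemma~\ref{lm:disiso}.
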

\begin{proof}
Let us first define two blocks of constants: $c_{i,i',j,j'}=c_{i-i'+j'-j}$ and $d_{i,i',j,j'}=d_{i-i'+j'-j}$ for $i,j,i',j'\in\Z_n$. 
Note that, for each  $i',j,j'\in \Z_n$,  
$(G,+,0)= \langle c_{i,i',j,j'}, d_{i,i',j,j'}\mid i\in \Z_n\rangle$. With these
matrices we define,
using Theorem \ref{thm:affmesh}, a square-free $2$-reductive solution
~$(G\times \Z_n^2,\textit{Ł},\slR)$ with
 \[\textit{Ł}_{(a,i,i')}((b,j,j'))=(b+c_{i,i',j,j'},j,j')\quad {\rm and}\quad 
 \slR_{(a,i,i')}((b,j,j'))=(b+d_{i,i',j,j'},j,j'),
 \]
 for $(a,i,i'), (b,j,j')\in  G\times \Z_n^2$. 
 Note that $\textit{Ł}_{(a,i,i')}=\textit{Ł}_{(0,i,i')}$ and $\slR_{(a,i,i')}=\slR_{(0,i,i')}$, for every $a\in G$ and $i,i'\in \Z_n$,

 Now, let $\pi_1,\pi_2:G\times \Z_n^2 \to G\times \Z_n^2$ be $\pi_1((a,i,i'))=(a,i+1,i')$ and $\pi_2((a,i,i'))=(a,i,i'+1)$. Clearly, these two are commuting permutations of order~$n$ on the set $G\times \Z_n^2$. Moreover, they are automorphisms of $(G\times\Z_n^2,\textit{Ł},\slR)$:
 \[
 \pi_1\textit{Ł}_{(a,i,i')}((b,j,j'))=(b+c_{i,i',j,j'},j+1,j')=
 (b+c_{i+1,i',j+1,j'},j+1,j')=\textit{Ł}_{\pi_1((a,i,i'))}(\pi_1((b,j,j'))),
 \]
 for all $i,j,i',j'\in\Z_n$ and $a,b\in G$. Analogously for the other three combinations of $\pi_1$ and $\pi_2$ with $\textit{Ł}$ and $\slR$.

Now, using these automorphisms, we shall construct
an isotope of the solution $(G\times\Z_n^2,\textit{Ł},\slR)$.
According to Lemma~\ref{lem:simple}, we need to check
Conditions \eqref{isop1} and \eqref{isop3}. 
%
 Let us compute, for $(a,i,i'), (b,j,j')\in  G\times \Z_n^2$, 
\begin{align*}
    \textit{Ł}_{\pi_1\pi_2((a,i,i'))}((b,j,j'))&=\textit{Ł}_{(a,i+1,i'+1)}((b,j,j'))=\\
    (b+c_{i+1-i'-1+j'-j},j,j')&=(b+c_{i-i'+j'-j},j,j')
    =\textit{Ł}_{(a,i,i')}((b,j,j')),
\end{align*}
and analogously we have $\slR_{\pi_1\pi_2((a,i,i'))}((b,j,j'))=\slR_{(a,i,i')}((b,j,j'))$.

Since Condition~\eqref{isop2} is obvious, we obtain, by Theorem \ref{th:iso2red},
that the solution $(G\times \Z_n^2,\lambda,\rho)$ with 
\begin{align*}
&\lambda_{(a,i,i')}((b,j,j'))=\textit{Ł}_{(a,i,i')}\pi_1((b,j,j'))=
\textit{Ł}_{(a,i,i')}((b,j+1,j'))=\\
&(b+c_{i,i',j+1,j'},j+1,j')=(b+c_{i-i'+j'-j-1},j+1,j'),\quad {\rm and}\\
&\rho_{(a,i,i')}((b,j,j'))=\slR_{(a,i,i')}\pi_2((b,j,j'))=\slR_{(a,i,i')}((b,j,j'+1))=\\
&(b+d_{i,i',j,j'+1},j,j'+1)=(b+d_{i-i'+j'-j+1},j,j'+1),
\end{align*}
for $(a,i,i'), (b,j,j')\in 
G\times \Z_n^2$,
is a 2-permutational one. 

Note also that 
\begin{align*}
   & \lambda^{-1}_{(a,i,i')}((b,j,j'))=(b-c_{i,i',j,j'},j-1,j')=(b-c_{i-i'+j'-j},j-1,j'),\quad {\rm and}\\
   & \rho^{-1}_{(a,i,i')}((b,j,j'))=(b-d_{i,i',j,j'},j,j'-1)=(b-d_{i-i'+j'-j},j,j'-1).
\end{align*}
The solution is indecomposable if the permutation group
$\mathcal{G}(Y)$ 
is transitive. 
Let us have $a\in G$ arbitrary. We first  prove that
$(a +c_i+d_j,0,0)$ is in the same orbit as $(a,0,0)$, for any $i,j\in \Z_n$:
\begin{multline*}
    \lambda_{(0,1,0)}^{-1}\lambda_{(0,i+1,0)}\rho_{(0,0,1)}^{-1}\rho_{(0,j,1)}(a,0,0){=}
    \lambda_{(0,1,0)}^{-1}\lambda_{(0,i+1,0)}\rho_{(0,0,1)}^{-1}((a+d_j,0,1))=\\
    =\lambda_{(0,1,0)}^{-1}\lambda_{(0,i+1,0)}((a+d_j,0,0))
    =\lambda_{(0,1,0)}^{-1}((a+c_i+d_j,1,0))
    =(a+c_i+d_j,0,0).
\end{multline*}
By an induction argument then $(a,0,0)$ lies in the same orbit as $(0,0,0)$, for any $a\in G$. 
Now, for any $b\in G$ and $j,j'\in\Z_n$,
\begin{align*}
    \lambda_{(0,j+1,j')} ((b,j,j'))&=(b,j+1,j') &
    \rho_{(0,j,j'+1)} ((b,j,j'))&=(b,j,j'+1)\\
    \lambda_{(0,j,j')}^{-1} ((b,j,j'))&=(b,j-1,j') &
    \rho_{(0,j,j')}^{-1} ((b,j,j'))&=(b,j,j'-1)
\end{align*}
and therefore the permutation group $\mathcal{G}(Y)$  is transitive on~$Y=G\times \Z_n^2$.
%
 \end{proof}

 We will denote the solution described above by $\mathcal{S}(G\times \Z_n^2,{\bf c}, {\bf d})$, for $n\in \mathbb{N}\cup\{\infty\}$, an abelian group $(G,+,0)$ and ${\bf c}, {\bf d}\in G^{\Z_n}$ 
specified in Theorem \ref{th:main2}. 
Let us note that $\mathcal{S}(\Z_1\times \Z_n^2,{\bf 0}, {\bf 0})$ is a permutation solution. 
Let $(Y,\lambda,\rho)=\mathcal{S}(G\times \Z_n^2,{\bf c}, {\bf d})$ for some abelian group $(G,+,0)$ and ${\bf c}, {\bf d}\in G^{\Z_n}$. 
For any $(a,i,i'), (b,j,j'), (x,k,k')\in G\times \Z_n^2$ we have:
\begin{align*}
  & U_Y((a,i,i'))=\lambda^{-1}_{(a,i,i')}((a,i,i'))=(a-c_{i-i'+i'-i},i-1,i')=(a,i-1,i')=\pi^{-1}_1((a,i,i'))\quad  {\rm and}\\
  &T_Y((a,i,i'))=\rho^{-1}_{(a,i,i')}((a,i,i'))=(a-d_{i-i'+i'-i},i,i'-1)=(a,i,i'-1)=\pi^{-1}_2((a,i,i')).
   \end{align*}
Also note that 
$$\lambda_{(a,i,i')}\lambda_{(b,j,j')}^{-1} ((x,k,k'))=\lambda_{(a,i,i')}(x-c_{j-j'+k'-k},k-1,k')=(x+c_{i-i'+k'-k}-c_{j-j'+k'-k},k,k')$$
and analogously $\rho_{(a,i,i')}\rho_{(b,j,j')}^{-1} ((x,k,k'))=(x+d_{i-i'+k'-k}-d_{j-j'+k'-k},k,k')$. And also recall, from the proof of the theorem,
\begin{align*}
\textit{Ł}_{(a,i,i')}((b,j,j'))&=\lambda_{(a,i,i')}U_Y((b,j,j'))=(b+c_{i-i'-j+j'},j,j')\\
 \slR_{(a,i,i')}((b,j,j'))&=\rho_ {(a,i,i')}T_Y((b,j,j'))=(b+d_{i-i'-j+j'},j,j').
 \end{align*}


\begin{remark}\label{rem:cansol}
In a solution $(Y,\lambda,\rho)=\mathcal{S}(G\times \Z_n^2,{\bf c}, {\bf d})$, 
for any $i,j\in \Z_n$ and $a\in G$ we have:
\begin{itemize}
    \item $o(U_Y)=o(T_Y)=n$ and $U_Y^{i}T_Y^{j}=\id_Y$ if and only if $n\mid i$ and $n\mid j$.
    \item  $\mathcal{LR}(Y)\cap\mathcal{D}(Y)=\{\id\}$,
    \item $\GD{Y}=\D{Y}\ltimes  \LR{Y}$.
\end{itemize}
\end{remark}
\begin{lemma}\label{lm:GisoLR}
For a solution $(Y,\lambda,\rho)=\mathcal{S}(G\times \Z_n^2,{\bf c}, {\bf d})$, we have 
$$(G,+,0)\cong\LR{Y}/\LR{Y}_{(0,0,0)}\cong \dis{Y}/\dis{Y}_{(0,0,0)},$$
where both the isomorphisms send $c_i\in G$ onto the coset represented by $\lambda_{(0,i,0)}\lambda_{(0,0,0)}^{-1}$ and
$d_i\in G$ onto the coset represented by $\rho_{(0,0,i)}\rho_{(0,0,0)}^{-1}$.
\end{lemma}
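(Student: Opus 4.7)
The plan is to construct an explicit surjective homomorphism $\varphi\colon \LR{Y}\to G$ whose kernel is the stabilizer $\LR{Y}_{(0,0,0)}$, and then to transfer the resulting isomorphism $\LR{Y}/\LR{Y}_{(0,0,0)}\cong G$ to $\dis{Y}/\dis{Y}_{(0,0,0)}$ via Lemma~\ref{lem:isom-LR-dis}. Since $(Y,\lambda,\rho)$ is $2$-permutational by Theorem~\ref{th:main2}, the group $\LR{Y}$ is abelian by Corollary~\ref{corol:LR-abelian}; in particular $\LR{Y}_{(0,0,0)}$ is normal in $\LR{Y}$ and the hypothesis of Lemma~\ref{lem:isom-LR-dis} holds, so it is enough to establish the first isomorphism.

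The critical observation is that $\LR{Y}$ acts by \emph{slicewise translations}: from the formulas
\[
\textit{Ł}_{(a,i,i')}((b,j,j'))=(b+c_{i-i'-j+j'},j,j'),\qquad \slR_{(a,i,i')}((b,j,j'))=(b+d_{i-i'-j+j'},j,j'),
\]
each generator of $\LR{Y}$ preserves every slice $G\times\{(j,j')\}$ and acts there by translation in the $G$-coordinate. Since the set of self-maps of $Y$ that preserve every slice and act as a translation on each slice is closed under composition and inversion, every $\alpha\in\LR{Y}$ has this form. Writing $\pi_G\colon Y\to G$ for the projection on the first coordinate and setting
\[
\varphi(\alpha):=\pi_G(\alpha((0,0,0))),
\]
we obtain $\alpha((b,0,0))=(b+\varphi(\alpha),0,0)$, and hence
\[
\alpha\beta((0,0,0))=\alpha((\varphi(\beta),0,0))=(\varphi(\alpha)+\varphi(\beta),0,0),
\]
so $\varphi$ is a group homomorphism.

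Clearly $\ker\varphi=\LR{Y}_{(0,0,0)}$: since $\alpha$ preserves slices, $\alpha((0,0,0))\in G\times\{(0,0)\}$, so $\pi_G(\alpha((0,0,0)))=0$ iff $\alpha((0,0,0))=(0,0,0)$. Surjectivity follows from $\textit{Ł}_{(0,i,0)}((0,0,0))=(c_i,0,0)$ and from an analogous evaluation on the $\slR$-generators, showing that the image of $\varphi$ contains the generating set $\{c_i,d_i\mid i\in\Z_n\}$ of $G$. The First Isomorphism Theorem then yields $\LR{Y}/\LR{Y}_{(0,0,0)}\cong G$. The identification of the cosets of $\lambda_{(0,i,0)}\lambda_{(0,0,0)}^{-1}$ and of the $\rho$-product is then a matter of direct evaluation: using $\lambda_y=\textit{Ł}_y U_Y^{-1}$ and $\rho_y=\slR_y T_Y^{-1}$ one rewrites these products inside $\LR{Y}$ as $\textit{Ł}_{(0,i,0)}\textit{Ł}_{(0,0,0)}^{-1}$ and the corresponding $\slR$-expression, and evaluates at $(0,0,0)$.

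The only real obstacle is establishing the slicewise-translation description of $\LR{Y}$; once this is in hand, the kernel computation, the surjectivity check, and the matching of generators are routine group-theoretic bookkeeping directly from the explicit formulas of Theorem~\ref{th:main2}.
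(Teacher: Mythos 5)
Your proof is correct and follows essentially the same route as the paper: both define the homomorphism $\LR{Y}\to G$ by evaluating at $(0,0,0)$ and projecting onto the $G$-coordinate, identify the kernel with the stabilizer, and get surjectivity from the fact that the $c_i,d_i$ generate $G$. The only cosmetic differences are that you justify the homomorphism property via the slicewise-translation structure of $\LR{Y}$ (the paper instead writes each element as a product of the generators $\textit{Ł}_{(0,i,0)},\slR_{(0,0,j)}$ using abelianness) and that you pass to $\dis{Y}/\dis{Y}_{(0,0,0)}$ via Lemma~\ref{lem:isom-LR-dis} rather than by restricting the homomorphism to $\dis{Y}$; both variants are sound and compatible with the stated description of the cosets.
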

\begin{proof}
Note that the group $\LR{Y}$ 
is equivalently generated by the set $\{\textit{Ł}_{(0,i,0)},\slR_{(0,0,i)}:i\in \Z_n\}$. Let $\alpha\in \LR{Y}$. Since the group is abelian, there are $k_0,\ldots,k_{n-1},l_0,\ldots,l_{n-1}\in \Z_n$ such that $\alpha=\prod_{i=0}^{n-1}\textit{Ł}_{(0,i,0)}^{k_i}\prod_{j=0}^{n-1}\slR_{(0,0,j)}^{l_j}$. Moreover, $\alpha((0,0,0))=(\sum_{i=0}^{n-1}k_ic_i+\sum_{j=0}^{n-1}l_jd_j,0,0)$.

    Let $\phi\colon\LR{Y}\to G$ be 
    such that 
    \begin{align*}
    &\phi(\alpha):=\sum_{i=0}^{n-1}k_ic_i+\sum_{j=0}^{n-1}l_jd_j. 
    \end{align*}
Clearly,   $\phi(\textit{Ł}_{(0,i,0)})=c_i$,  $\phi(\slR_{(0,0,i)})=d_i$ and $\phi$ is a homomorphism of groups $\LR{Y}$ and $(G,+,0)$. 
 Since the group $(G,+,0)$ is generated by the set $\{c_i, d_i:i\in \Z_n\}$, 
 $\phi$ is a surjection.
 Furthermore, 
 \begin{multline*}
 \Ker\phi=\{\alpha\in \LR{Y}:\phi(\alpha)=0\}=\{\alpha\in \LR{Y}:\sum_{i=0}^{n-1}k_ic_i+\sum_{j=0}^{n-1}l_jd_j=0\}=\\
 \{\alpha\in \LR{Y}:\alpha((0,0,0))=(0,0,0)\}=\LR{X}_{(0,0,0)}.
 \end{multline*}
 Let $\bar\phi$ be the associated isomorphism
 $\LR{Y}/\Ker{\phi}\to G$. Since $\textit{Ł}_{(0,0,0)}\in\Ker\phi$, we can see that
 \[\bar\phi^{-1}(c_i)=\textit{Ł}_{(0,i,0)}\LR{Y}_{(0,0,0)}=\textit{Ł}_{(0,i,0)}\textit{Ł}_{(0,0,0)}^{-1}\LR{Y}_{(0,0,0)}=
 \lambda_{(0,i,0)}\lambda_{(0,0,0)}^{-1}\LR{Y}_{(0,0,0)}
 \]
 and analogously for~$d_i$.

 Finally, the group $\dis{Y}$ is generated by the set $\{\lambda_{(0,i,0)}\lambda_{(0,0,0)}^{-1},\rho_{(0,0,i)}\rho_{(0,0,0)}^{-1}:i\in \Z_n\}$,
 and hence $\phi|_{\dis{Y}}$ is a surjective homomorphism as well. The rest then follows.
\end{proof}

\begin{lemma}\label{lem:stabilizer}
    In a solution $(Y,\lambda,\rho)=\mathcal{S}(G\times \Z_n^2,{\bf c}, {\bf d})$, 
    $\GD{Y}_{(0,0,0)}=\LR{Y}_{(0,0,0)}$.
\end{lemma}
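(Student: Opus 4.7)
The inclusion $\LR{Y}_{(0,0,0)}\subseteq \GD{Y}_{(0,0,0)}$ is immediate from $\LR{Y}\leq \GD{Y}$, so the whole content lies in the reverse inclusion. My plan is to exploit the semidirect product decomposition $\GD{Y}=\D{Y}\ltimes\LR{Y}$ recorded in Remark~\ref{rem:cansol} together with the explicit formulas for how $\D{Y}$ and $\LR{Y}$ act on the three coordinates of $Y=G\times\Z_n^2$.

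First I would observe that every element $\alpha\in\LR{Y}$ preserves the last two coordinates. This is because $\LR{Y}$ is generated by the mappings $\textit{Ł}_{(a,i,i')}$ and $\slR_{(a,i,i')}$, and the explicit description
\[
\textit{Ł}_{(a,i,i')}((b,j,j'))=(b+c_{i-i'+j'-j},j,j'),\qquad
\slR_{(a,i,i')}((b,j,j'))=(b+d_{i-i'+j'-j},j,j')
\]
shows that both kinds of generators only modify the first coordinate. In particular $\alpha((0,0,0))=(a,0,0)$ for some $a\in G$. On the other hand the generators $U_Y=\pi_1^{-1}$ and $T_Y=\pi_2^{-1}$ of $\D{Y}$ act as $U_Y((b,j,j'))=(b,j-1,j')$ and $T_Y((b,j,j'))=(b,j,j'-1)$, so they only shift the last two coordinates and leave the first coordinate intact.

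Now take an arbitrary $\varphi\in\GD{Y}_{(0,0,0)}$. By Lemma~\ref{lem:UTLR} (or directly by the semidirect product structure) we can write $\varphi=U_Y^{r}T_Y^{s}\alpha$ for some $\alpha\in\LR{Y}$ and $r,s\in\Z$. Using the two observations above,
\[
(0,0,0)=\varphi((0,0,0))=U_Y^{r}T_Y^{s}(a,0,0)=(a,-r,-s),
\]
which forces $a=0$, $r\equiv 0\pmod n$ and $s\equiv 0\pmod n$. By the first bullet of Remark~\ref{rem:cansol} this means $U_Y^{r}T_Y^{s}=\id_Y$, hence $\varphi=\alpha\in\LR{Y}$, and since $\alpha((0,0,0))=(0,0,0)$ we conclude $\varphi\in\LR{Y}_{(0,0,0)}$.

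There is no real obstacle here; the only point that deserves care is the case $n=\infty$, where one must recall the convention $a\equiv b\pmod{\infty}$ iff $a=b$ so that the conclusion $U_Y^{r}T_Y^{s}=\id_Y$ still follows. The argument then closes without any further computation.
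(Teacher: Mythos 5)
Your proof is correct and follows essentially the same route as the paper's: decompose $\varphi=U_Y^{r}T_Y^{s}\alpha$ via Lemma~\ref{lem:UTLR}, note $\alpha((0,0,0))=(a,0,0)$, compute $\varphi((0,0,0))=(a,-r,-s)$, and conclude $U_Y^{r}T_Y^{s}=\id_Y$. The extra care you take with the coordinate-preservation of the generators and the $n=\infty$ convention is consistent with, and slightly more explicit than, the paper's argument.
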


\begin{proof}
    Let $\gamma\in\GD{Y}$ be such that $\gamma((0,0,0))=(0,0,0)$. According to Lemma~\ref{lem:UTLR}, there exists $\alpha\in\LR{Y}$ and $i,j\in\Z$ such that $\gamma=U^iT^j\alpha$. There exists $a\in G$ such that $\alpha((0,0,0))=(a,0,0)$. Then
    $\gamma((0,0,0))=U^iT^j((a,0,0))=(a,-i,-j)$.
    Hence $i\equiv n\equiv j\pmod n$ and hence $U^i=T^j=\mathrm{id}_Y$ and therefore $\gamma\in\LR{Y}$.
\end{proof}

\section{Universal and canonical solution}
In this section we 
study homomorphic images of the solutions we have constructed in the previous section. We prove that
every indecomposable solution of multipermutation level~$2$ can be obtained as such an image.

A huge advantage of epimorphisms is that they
induce well defined homomorphisms of groups.
Let $\Phi\colon Y\to X$ be a surjective homomorphism of solutions $(Y,\lambda,\rho)$ and $(X,\sigma,\tau)$. By \cite[Lemma 3.10]{JP23} the mapping $\Phi_{\mathcal{G}}\colon \mathcal{G}(Y)\to \mathcal{G}(X)$, such that $\Phi_{\mathcal{G}}(\lambda_y)=\sigma_{\Phi(y)}$, for each $y\in Y$, is well defined homomorphism of groups.

Let $a,y\in Y$ and $\textit{Ł}_y=\lambda_yU_Y$, $\slR_y=\rho_yT_Y$
and let $\Phi_{\mathcal{GD}}\colon \GD{Y}\to \GD{X}$ be the extension of $\Phi_{\mathcal{G}}$ given by:
\begin{align*}
&\Phi_{\mathcal{GD}}(U_Y)\Phi(y):=\Phi(U_Y(y)),\\
&\Phi_{\mathcal{GD}}(T_Y)\Phi(y):=\Phi(T_Y(y)),\\
&\Phi_{\mathcal{GD}}(\textit{Ł}_a)\Phi(y):=\Phi(\textit{Ł}_a(y)),\\
&\Phi_{\mathcal{GD}}(\slR_a)\Phi(y):=\Phi(\slR_a(y)).
\end{align*}
 Then the above definitions give:
\begin{align*}
&\Phi_{\mathcal{GD}}(U_Y)\Phi(y)=\Phi(U_Y(y))=
\Phi(\lambda^{-1}_y(y))=\sigma^{-1}_{\Phi(y)}\Phi(y)=U_X\Phi(y)\quad \Rightarrow\quad \Phi_{\mathcal{GD}}(U_Y)=U_X, \quad {\rm and}
\\
&\Phi_{\mathcal{GD}}(\textit{Ł}_a)\Phi(y)=\Phi(\textit{Ł}_a(y))=\Phi(\lambda_aU_Y(y))=\sigma_{\Phi(a)}\Phi(U_Y(y))=\sigma_{\Phi(a)}\Phi_{\mathcal{GD}}(U_Y)\Phi(y)=\\
&\sigma_{\Phi(a)}U_X\Phi(y)=
L_{\Phi(a)}\Phi(y)\quad \Rightarrow\quad \Phi_{\mathcal{GD}}(\textit{Ł}_a)=L_{\Phi(a)}.
\end{align*}
Similarly,
\[
\Phi_{\mathcal{GD}}(T_Y)=T_X \qquad {\rm and}\qquad 
\Phi_{\mathcal{GD}}(\slR_a)=\mathbf{R}_{\Phi(a)}.
\]
Hence, for each $\chi\in \GD{Y}$, $\Phi_{\mathcal{GD}}(\chi)\Phi(y)=\chi'\Phi(y)$, for some $\chi'\in \GD{X}$.
\\
Further, let $\Phi_{\mathrm{Dis}}$ be the restriction
of~$\Phi_{\mathcal{GD}}$ to the group $\dis{Y}$. Clearly, $\Phi_{\mathrm{Dis}}\colon \dis{Y}\to \dis{X}$.
Let $a\in Y$. Then 
\begin{align*}
\Phi_{\mathrm{Dis}}(\dis{Y}_a)=\Phi_{\mathrm{Dis}}(\{\alpha\in \dis{Y}:\alpha(a)=a\})=\dis{X}_{\Phi(a)}
\end{align*}
and $\Phi_{\mathrm{Dis}}$ induces the homomorphism $\overline{\Phi}_{\mathrm{Dis}}\colon \dis{Y}/\dis{Y}_{a}\to \dis{X}/\dis{X}_{\Phi(a)}$
of quotient groups given by $\alpha\dis{Y}_{a}\mapsto \Phi_{\mathrm{Dis}}(\alpha)\dis{X}_{\Phi(a)}$.

\begin{prop}\label{prop:hom-img}
   Let $(X,\sigma,\tau)$ be a non-degenerate solution, let $(G,+,0)$ be an abelian group, $n\in\mathbb{N}\cup\{\infty\}$
   and let us have two series $\{c_i\}_{i\in\Z_n}$, and $\{d_i\}_{i\in\Z_n}$
   satisfying the conditions of Theorem~\ref{th:main2}.
   Then $(X,\sigma,\tau)$
   is a homomorphic image of $(Y,\lambda,\rho)=\mathcal{S}(G\times \Z_n^2,\mathbf{c},\mathbf{d})$ if and only if the following conditions hold:
    \begin{enumerate}
        \item $(X,\sigma,\tau)$ is indecomposable,
        \item multipermutation level of~$(X,\sigma,\tau)$ is at most~$2$,
        \item the order $o(U_X)$ of the permutation~$U_X$ divides~$n$,
        \item the order $o(T_X)$ of the permutation~$T_X$ divides~$n$,
        \item there exists a group homomorphism~$\varphi$ from~$(G,+,0)$ onto~$\dis{X}/\dis{X}_{e}$ such that, for all~$i\in\Z_n$, $\varphi(c_i)=\sigma_{U_X^{-i}(e)}\sigma_e^{-1}\dis{X}_e$ and  $\varphi(d_i)=\tau_{T_X^{-i}(e)}\tau_e^{-1}\dis{X}_e$, for some $e\in X$.
    \end{enumerate}
\end{prop}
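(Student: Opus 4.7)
The plan is to prove each implication separately, using the groupoid/group machinery set up in the preceding sections.

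\textbf{Forward direction.} Suppose $\Phi\colon Y\to X$ is a surjective homomorphism where $(Y,\lambda,\rho)=\mathcal{S}(G\times\Z_n^2,\mathbf{c},\mathbf{d})$. Conditions (1) and (2) are immediate: indecomposability and multipermutation level at most~$2$ (characterised by the identities \eqref{eq:per1}--\eqref{eq:per4}) both pass to surjective homomorphic images, and by Theorem~\ref{th:main2} the solution $(Y,\lambda,\rho)$ enjoys both properties. For (3) and (4), apply $\Phi_{\mathcal{GD}}$ to $U_Y^n=T_Y^n=\id_Y$ from Remark~\ref{rem:cansol}; surjectivity of $\Phi$ forces $U_X^n=T_X^n=\id_X$. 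For (5), set $e:=\Phi((0,0,0))$ and compose the inverse of the isomorphism $\dis{Y}/\dis{Y}_{(0,0,0)}\to G$ of Lemma~\ref{lm:GisoLR} with the induced group homomorphism $\overline{\Phi}_{\mathrm{Dis}}\colon\dis{Y}/\dis{Y}_{(0,0,0)}\to\dis{X}/\dis{X}_e$ to obtain $\varphi\colon G\to\dis{X}/\dis{X}_e$. Surjectivity of $\varphi$ follows from that of $\Phi_{\mathrm{Dis}}$, and the prescribed values follow directly from $\Phi((0,i,0))=\Phi(U_Y^{-i}((0,0,0)))=U_X^{-i}(e)$ and the analogous computation with $T_Y$.

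\textbf{Backward direction.} Given conditions (1)--(5), for each $a\in G$ choose a representative $\alpha_a\in\varphi(a)\subseteq\dis{X}$ and define
\[
\Phi((a,i,i'))\;:=\;U_X^{-i}T_X^{-i'}\alpha_a(e).
\]
Well-definedness with respect to the choice of representative is clear because $\dis{X}_e$ fixes $e$; (3)--(4) guarantee that the formula depends only on $i,i'\pmod n$. Surjectivity is obtained by combining indecomposability with Lemma~\ref{rem:dis}: every $x\in X$ equals $U_X^{r}T_X^{s}\beta(e)$ for some $r,s\in\Z$ and $\beta\in\dis{X}$, and surjectivity of $\varphi$ supplies an $a\in G$ with $\alpha_a\dis{X}_e=\beta\dis{X}_e$, hence $\alpha_a(e)=\beta(e)$.

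\textbf{The main obstacle: checking that $\Phi$ is a homomorphism.} Translating via $\sigma_x=L_xU_X^{-1}$, the $\sigma$-condition $\sigma_{\Phi((a,i,i'))}\Phi((b,j,j'))=\Phi(\lambda_{(a,i,i')}((b,j,j')))$ is equivalent to the $L$-version $L_{\Phi((a,i,i'))}\Phi((b,j,j'))=\Phi(\textit{Ł}_{(a,i,i')}((b,j,j')))$. Normality of $\LR{X}$ in $\GD{X}$ combined with Lemma~\ref{lem:alpha} makes the outer $\alpha_a$ evaporate, leaving $L_{U_X^{-i}T_X^{-i'}(e)}$; since $U_X,T_X$ are automorphisms of the $2$-reductive isotope $(X,L,\mathbf{R})$ this conjugates into $U_X^{-i}T_X^{-i'}L_e T_X^{i'}U_X^i$. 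Pushing $L_e$ past the intermediate $T_X^{i'-j'}U_X^{i-j}$ by repeated conjugation, and using the abelianness of $\LR{X}$ to swap with $\alpha_b$, the left-hand side reduces to $U_X^{-j}T_X^{-j'}\alpha_b L_{U_X^{j-i}T_X^{j'-i'}(e)}(e)$. The crucial step is then to invoke the isotope identity $L_x=L_{UT(x)}$ from Lemma~\ref{lem:simple} iteratively to collapse the subscript to $L_{U_X^{-k}(e)}$, where $k=i-i'+j'-j$. On the other side, the homomorphism property $\varphi(b+c_k)=\varphi(b)\varphi(c_k)$ together with condition~(5) identifies $\alpha_{c_k}(e)=L_{U_X^{-k}(e)}(e)$, so the two sides agree. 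The $\tau$-case is strictly parallel, with $\mathbf{R}$, $T_X$, the identity $\mathbf{R}_x=\mathbf{R}_{UT(x)}$ and the $\mathbf{d}$-part of condition~(5) replacing the analogous ingredients.
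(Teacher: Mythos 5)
Your proposal is correct and follows essentially the same route as the paper: the forward direction extracts $\varphi$ by composing the isomorphism of Lemma~\ref{lm:GisoLR} with the induced map $\overline{\Phi}_{\mathrm{Dis}}$, and the backward direction defines $\Phi((a,i,i'))=U_X^{-i}T_X^{-i'}(\varphi(a)(e))$ and verifies the homomorphism property via the same chain of identities (\eqref{dis1}, \eqref{eq:LU}/\eqref{eq:sigma_UT}, \eqref{eq:2-red-dis}, abelianness and normality of $\LR{X}$, and the group-homomorphism property of $\varphi$ applied to $b+c_k$). No gaps.
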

\begin{proof}
\noindent ``$\Rightarrow$'': Let $(X,\sigma,\tau)$ be a homomorphic image of an epimorphism $\Phi\colon Y\to X$ of solutions and let $e:=\Phi((0,0,0))$. 
Homomorphic images of indecomposable solutions
of multipermutation level~$2$ are necessarily
indecomposable solutions of level at most~$2$. 
Since $\Phi_{\mathcal{GD}}(U_Y)=U_X$ and
$\Phi_{\mathcal{GD}}(T_Y)=T_X$, we see that $U_X^n=T_X^n=\mathrm{id}_X$. 


Let $\psi:G\to\dis{Y}/\dis{Y}_{(0,0,0)}$ be the isomorphism defined in Lemma~\ref{lm:GisoLR} and let $\overline\Phi_{\mathrm{Dis}}$ be the induced homomorphism $\dis{Y}/\dis{Y}_{(0,0,0)}\to \dis{X}/\dis{X}_{e}$. Then $\varphi=\overline\Phi_{\mathrm{Dis}}\circ\psi$
satisfies
\begin{align*}
    \varphi(c_i)&=
    \overline\Phi_{\mathrm{Dis}}(\lambda_{U_Y^{-i}(0,0,0)}\lambda_{(0,0,0)}^{-1}\dis{Y}_{(0,0,0)})
    =\sigma_{U_X^{-i}(e)}\sigma_e^{-1}\dis{X}_e,\\
    \varphi(d_i)&=
    \overline\Phi_{\mathrm{Dis}}(\rho_{T_Y^{-i}(0,0,0)}\rho_{(0,0,0)}^{-1}\dis{Y}_{(0,0,0)})
    =\tau_{T_X^{-i}(e)}\tau_e^{-1}\dis{X}_e.
\end{align*}

``$\Leftarrow$'':
Let $(X,\sigma,\tau)$ be a solution satisfying the conditions (1)--(5).
Let $(Y,\lambda,\rho):=\mathcal{S}(G\times \Z_n^2,\mathbf{c},\mathbf{d})$. 
Let us define $\Phi:Y\to X$ as
\[\Phi((a,i,i')):=U_X^{-i}T_X^{-i'}(\varphi(a)(e)),     
 \]
 for each $a\in G$ and $i,j\in\Z$,
 where $\varphi(a)(e)$  stands for the image of~$e$ under a permutation $\alpha\in\dis{X}$ such that $\varphi(a)=\alpha\dis{X}_e$. 
It is evident that the mapping $\Phi$ is well defined since $\varphi(a)(e)$ does not depend on the representative of the coset~$\varphi(a)$.
According to Lemma \ref{lem:UTLR} and Proposition~\ref{Pr:DQ}
this mapping is onto. We will prove that $\Phi$ is a homomorphism of solutions. Let $i,i',j,j'\in \Z_n$ and $a,b\in G$. Hence
\begin{multline*}
    \Phi(\lambda_{(a,i,i')}((b,j,j')))=
    \Phi((b+ c_{i-i'+j'-j-1},j+1,j'))=
   U_X^{-j-1}T_X^{-j'}(\varphi(b) \sigma_{U_X^{-i+i'-j'+j+1}(e)}\sigma^{-1}_{e}(e))\\
   \stackrel{\eqref{dis1}}=U_X^{-j-1}T_X^{-j'}(\varphi(b) L_{U_X^{-i+i'-j'+j+1}(e)}L^{-1}_{e}(e))\stackrel{\eqref{eq:LU}}=
    L_{U_X^{-i+i'}(e)} U_X^{-j-1}T_X^{-j'} (\varphi(b) (e))=\\
        \sigma_{U_X^{-i+i'}(e)} U_X^{-j}T^{-j'} (\varphi(b) 
        (e))
\stackrel{\eqref{eq:2-red-dis}}=
\sigma_{U_X^{-i}T_X^{-i'}(\varphi(a)(e))} U_X^{-j}T_X^{-j'}(\varphi(b)(e))=
\sigma_{\Phi((a,i,i'))}\Phi((b,j,j')).
\end{multline*}
Analogously, we can show that $\Phi(\rho_{(a,i,i')}((b,j,j')))=\tau_{\Phi((a,i,i'))}\Phi((b,j,j'))$, which finishes the proof.
\end{proof}
The previous result naturally suggests the following structure.
Let $(X,\sigma,\tau)$ be a finite indecomposable solution of level at most~$2$ and let $\tilde 0\in X$.
Let us take:
\begin{itemize}
    \item $(G,+,0)=\dis{X}/\dis{X}_{\tilde 0}\cong\LR{X}/\LR{X}_{\tilde 0}$,
    \item $n=\mathrm{lcm}(o(U),o(T))$,
    \item ${\bf c}=(c_i)_{i\in\Z_n}\in G^{n}$ be a sequence  
such that $c_i=\sigma_{\tilde i}\sigma^{-1}_{\tilde 0}\dis{X}_{\tilde 0}$
    \item ${\bf d}=(d_i)_{i\in\Z_n}\in G^{_n}$ be a sequence 
such that $d_i=\tau_{\tilde i}\tau^{-1}_{\tilde 0}\dis{X}_{\tilde 0}$.
\end{itemize}
In this case, $c_0=\sigma_{\tilde 0}\sigma^{-1}_{\tilde 0}\dis{X}_{\tilde 0}=\dis{X}_{\tilde 0}$ and $d_0=\tau_{\tilde 0}\tau^{-1}_{\tilde 0}\dis{X}_{\tilde 0}=\dis{X}_{\tilde 0}$. And, according to Proposition~\ref{prop:gen}, we have $G=\langle c_i,d_i:i\in \Z_n\rangle$.
\begin{de}
The solution $\mathcal{C}(X,\sigma,\tau):=\mathcal{S}(\dis{X}/\dis{X}_{\tilde 0}\times \Z_n^2,{\bf c}, {\bf d})$, with $n$, ${\bf c}$ and ${\bf d}$ defined as above, is called \emph{the canonical solution} of a solution $(X,\sigma,\tau)$.
\end{de}
\begin{corollary}\label{prop:canhom}
   Let $(X,\sigma,\tau)$ be an indecomposable solution of level at most~$2$. Then it is a homomorphic image of its canonical solution. 
\end{corollary}

As one might expect, the canonical solution
is the smallest construction that maps onto the given solution.

\begin{proposition}
    Let $(X,\sigma.\tau)$ be an indecomposable solution of multipermutation level~$2$ and let
    $(X,\sigma,\tau)$ be a homomorphic image of a solution  $\mathcal{S}(G\times \Z_n^2,\mathbf{c},\mathbf{d})$, for some abelian group~$(G,+,0)$, an integer~$n$ and two sequences~$\mathbf{c}$ and $\mathbf{d}$, along with a homomorphism~$\Psi$.
   Further, let $\Phi$ be the natural projection from the canonical solution $\mathcal{C}(X,\sigma,\tau)$ onto $(X,\sigma,\tau)$. 
   Then there exists a surjective homomorphism $\psi\colon \mathcal{S}(G\times \Z_n^2,\mathbf{c},\mathbf{d})\to \mathcal{C}(X,\sigma,\tau)$ 
   such that $\Psi=\Phi\circ\psi$.
\end{proposition}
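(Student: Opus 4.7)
The plan is to build $\psi$ coordinatewise: act on the $G$-component through a group homomorphism extracted from~$\Psi$, and on the $\Z_n^2$-component through reduction modulo $n':=\mathrm{lcm}(o(U_X),o(T_X))$. First I would set $e:=\Psi((0,0,0))$ and arrange, using the regularity of $\Aut(X,\sigma,\tau)$ from Proposition~\ref{prop:aut-regular}, that the base point $\tilde 0$ defining $\mathcal{C}(X,\sigma,\tau)$ coincides with~$e$. Proposition~\ref{prop:hom-img} applied to~$\Psi$ then supplies a surjective group homomorphism $\varphi\colon G\to G':=\dis{X}/\dis{X}_e$ satisfying $\varphi(c_i)=\sigma_{U_X^{-i}(e)}\sigma_e^{-1}\dis{X}_e$ (and the analogous formula for~$d_i$), and also yields $n'\mid n$. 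The relation~\eqref{eq:LU} identifies $\sigma_{U_X^{-i}(e)}$ with $\sigma_{\tilde i}$, so $\varphi(c_i)$ is exactly the $i$-th structural constant $c'_i$ of the canonical solution once the index is read modulo~$n'$.

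Next I would define $\psi\colon G\times\Z_n^2\to G'\times\Z_{n'}^2$ by $\psi((a,i,j)):=(\varphi(a),\,i\bmod n',\,j\bmod n')$. Surjectivity is immediate from that of~$\varphi$ and of the canonical projection $\Z_n\to\Z_{n'}$. The key identity $\varphi(c_k)=c'_{k\bmod n'}$ (and similarly for~$d$) then reduces the verification that $\psi$ is a solution homomorphism to a mechanical check against the formulas of Theorem~\ref{th:main2}, comparing $\psi\circ\lambda_{(a,i,i')}$ with $\lambda'_{\psi(a,i,i')}\circ\psi$ and analogously for~$\rho$.

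To establish $\Psi=\Phi\circ\psi$, I would invoke the explicit formula $\Psi((a,i,j))=U_X^{-i}T_X^{-j}(\varphi(a)(e))$ produced in the ``$\Leftarrow$'' direction of Proposition~\ref{prop:hom-img}, together with the corresponding formula $\Phi((\alpha\dis{X}_e,i',j'))=U_X^{-i'}T_X^{-j'}(\alpha(e))$ for the canonical projection (which is the special case of the same ``$\Leftarrow$'' direction in which the datum $\varphi$ is the identity of $G'$). Composing and exploiting $U_X^{n'}=T_X^{n'}=\id_X$ then yields the required equality pointwise.

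The step I expect to be the main obstacle is the base-point normalisation. A priori, $\tilde 0$ was chosen arbitrarily when constructing $\mathcal{C}(X,\sigma,\tau)$, whereas $e=\Psi((0,0,0))$ is forced by~$\Psi$; if these disagree, the structural sequences $(c_i)$ and $(c'_i)$ are anchored at different points and no naive $\psi$ interpolates them. The regularity of $\Aut(X,\sigma,\tau)$ is precisely what supplies a canonical identification between the two versions of $\mathcal{C}(X,\sigma,\tau)$ built from different base points, after which the construction above goes through unambiguously.
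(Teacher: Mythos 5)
Your proposal matches the paper's proof essentially step for step: both extract the epimorphism $\varphi\colon G\to\dis{X}/\dis{X}_{\tilde 0}$ from Proposition~\ref{prop:hom-img}, define $\psi((a,i,i'))=(\varphi(a),i,i')$ with the $\Z_n^2$-coordinates read modulo $\mathrm{lcm}(o(U_X),o(T_X))$, and establish the homomorphism property and the factorization $\Psi=\Phi\circ\psi$ by the same computations with the explicit formulas for $\Phi$ and $\Psi$. Your concern about the base point is legitimate rather than an obstacle of your own making --- the paper's proof silently takes $\tilde 0=\Psi((0,0,0))$, and your resolution via the regularity of the automorphism group (Proposition~\ref{prop:aut-regular}, combined with Corollary~\ref{corol:semi-regular} to identify the relevant lifted automorphism) is the correct way to accommodate an arbitrary prior choice of~$\tilde 0$.
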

\begin{proof}
    Let~$(Z,\mu,\nu)=\mathcal{C}(X,\sigma,\tau)$ be the canonical solution of $(X,\sigma,\tau)$ and let $(Y,\lambda,\rho)=\mathcal{S}(G\times \Z_n^2,\mathbf{c},\mathbf{d})$.    
According to Proposition~\ref{prop:hom-img},
there exists a epimorphism~$\varphi$ from $G$ onto
$\dis{X}/\dis{X}_{\tilde 0}$ such that
$\varphi(c_i)=\sigma_{\tilde i}\sigma^{-1}_{\tilde 0}\dis{X}_{\tilde 0}$ and $\varphi(d_i)=\tau_{\tilde i}\tau^{-1}_{\tilde 0}\dis{X}_{\tilde 0}$.
Now let
$\psi((a,i,i')):=(\varphi(a),i,i')$.
This mapping is well defined since $n$ is a multiple of $\mathrm{lcm}(o(U_X),o(T_X))$. It is clearly onto and
\begin{multline*}
    \psi(\lambda_{(a,i,i')}((b,j,j')))=
    \psi((b+c_{i-i'+j'-j-1},j+1,j'))=
    (\varphi(b)\sigma_{\widetilde{i-i'+j'-j-1}}\sigma_{\tilde 0}^{-1},j+1,j')\\
    =\mu_{(\varphi(a),i,i')}((\varphi(b),j,j')).
\end{multline*}
Analogously, we get $\psi(\rho_{(a,i,i')}((b,j,j')))=\nu_{(\varphi(a),i,i')}((\varphi(b),j,j'))$. Hence $\psi$ is an epimorphism. Finally, for $(a,i,i')\in G\times \Z_n^2$
\[\Phi\circ \psi((a,i,i'))=\Phi((\varphi(a),i,i'))
=U_X^{-i}T_X^{-i}(\varphi(a)(e))=\Psi((a,i,i')). \qedhere
\]
\end{proof}

As a consequence, we may say that a solution can be obtained by the construction from Section~5 if and only if it is isomorphic to its canonical solution. Nevertheless, there is a simpler description.

\begin{proposition}
A finite indecomposable multipermutation solution ~$(X,\sigma,\tau)$ of level at most~$2$ is isomorphic to the solution of the form $\mathcal{S}(G\times \Z_n^2,\mathbf{c}, \mathbf{d})$, for some $n\in \mathbb{N}$, a group $(G,+,0)$ and sequences   $\mathbf{c}, \mathbf{d}\in G^n$, if and only if 
$\D{X}\cong\Z_n^2$,
$\LR{X}\cap \D{X}=\{\id\}$ and $\mathcal{GD}(X)_{\widetilde {0}}\subseteq\LR{X}$.
\end{proposition}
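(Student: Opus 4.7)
The plan is to handle the two directions separately. For the forward direction, suppose $(X,\sigma,\tau) \cong \mathcal{S}(G\times \Z_n^2, \mathbf{c}, \mathbf{d}) =: (Y,\lambda,\rho)$. Remark~\ref{rem:cansol} yields $\D{Y} \cong \Z_n^2$ and $\LR{Y} \cap \D{Y} = \{\id\}$, and Lemma~\ref{lem:stabilizer} gives $\GD{Y}_{(0,0,0)} = \LR{Y}_{(0,0,0)} \subseteq \LR{Y}$. The first two properties are basepoint-free and transfer directly through the isomorphism. For the third, the isomorphism produces $\GD{X}_{x_0} \subseteq \LR{X}$ with $x_0$ the image of $(0,0,0)$, and conjugating by an element of $\GD{X}$ carrying $x_0$ to $\tilde{0}$, combined with normality of $\LR{X}$ in $\GD{X}$, upgrades this to $\GD{X}_{\tilde{0}} \subseteq \LR{X}$.

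For the converse, the strategy is to show that the canonical epimorphism $\Phi\colon \mathcal{C}(X,\sigma,\tau) \to (X,\sigma,\tau)$ given by Corollary~\ref{prop:canhom} is in fact bijective. Since $\D{X} \cong \Z_n^2$ forces $o(U_X) = o(T_X) = n$, the canonical solution has the desired shape $\mathcal{S}(\dis{X}/\dis{X}_{\tilde{0}} \times \Z_n^2, \mathbf{c}, \mathbf{d})$. From the proof of Proposition~\ref{prop:hom-img}, the map is $\Phi((a,i,i')) = U_X^{-i} T_X^{-i'}(\alpha_a(\tilde{0}))$, where $\alpha_a \in \dis{X}$ is any representative of the coset $a$. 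Surjectivity holds by Corollary~\ref{prop:canhom}, so it suffices to prove injectivity.

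Assume $\Phi((a,i,i')) = \Phi((b,j,j'))$, equivalently $\alpha_b^{-1} U_X^{j-i} T_X^{j'-i'} \alpha_a$ fixes $\tilde{0}$. Using normality of $\LR{X}$ in $\GD{X}$, rewrite this element as $\ell\delta$ with $\ell \in \LR{X}$ and $\delta = U_X^{j-i} T_X^{j'-i'} \in \D{X}$. The hypothesis $\GD{X}_{\tilde{0}} \subseteq \LR{X}$ gives $\ell\delta \in \LR{X}$, whence $\delta \in \LR{X} \cap \D{X} = \{\id\}$, and $\D{X} \cong \Z_n^2$ forces $(i,i') = (j,j')$ in $\Z_n^2$. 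The residual equation $\ell(\tilde{0}) = \tilde{0}$ then puts $\ell \in \LR{X}_{\tilde{0}}$, from which $a = b$ in $\dis{X}/\dis{X}_{\tilde{0}}$ follows via the isomorphism of Lemma~\ref{lem:isom-LR-dis}.

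The main technical point is the coset bookkeeping in the injectivity step: the representative $\alpha_a$ does not commute with $U_X^{j-i}T_X^{j'-i'}$ in general, so normality of $\LR{X}$ is essential to rewrite the composite as an element of $\LR{X}\cdot\D{X}$ with the factors in the stated subgroups before invoking $\LR{X} \cap \D{X} = \{\id\}$. Once this is done, all three structural hypotheses have been used exactly once and the argument closes.
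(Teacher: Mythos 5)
Your proof is correct and follows essentially the same route as the paper: the forward direction is read off from Remark~\ref{rem:cansol} and Lemma~\ref{lem:stabilizer}, and the converse is proved by showing that the canonical projection $\mathcal{C}(X,\sigma,\tau)\to(X,\sigma,\tau)$ is injective, using $\GD{X}_{\tilde 0}\subseteq\LR{X}$ to land in $\LR{X}$, then $\LR{X}\cap\D{X}=\{\id\}$ to kill the $\D{X}$-part, and $\D{X}\cong\Z_n^2$ to conclude $(i,i')=(j,j')$. Your explicit basepoint transfer in the forward direction and the normality bookkeeping in the injectivity step are slightly more careful than the paper's write-up but do not change the argument.
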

\begin{proof}
Let $(X,\sigma,\tau)$ be an indecomposable solution of level at most $2$. 

``$\Leftarrow$'' Let $\tilde 0\in X$, $\LR{X}\cap \D{X}=\{\id\}$ and $\mathcal{GD}(X)_{\widetilde {0}}\subseteq\LR{X}$. Let $(Y,\lambda,\rho)$ be the canonical solution of $(X,\sigma,\tau)$. 

By Corollary 
\ref{prop:canhom}, each indecomposable solution $(X,\sigma,\tau)$ of level at most~$2$ is a homomorphic image of its canonical solution. 
The mapping $\Phi\colon Y\to \dis{X}/{\dis{X}_{\widetilde {0}}}$ 
from the proof of Theorem~\ref{prop:hom-img} is given by
 $(\gamma{\dis{X}_{\widetilde {0}}},i,j)\mapsto U_X^{-i}T_X^{-j}\gamma(\widetilde {0}){\dis{X}_{\widetilde {0}}}$. Hence, it is sufficient to show that the mapping $\Phi$ is an injection. 
 
 Let $(\gamma,i,j),(\gamma',i',j')\in \dis{X}\times \Z_n^2$. Then 
 \[
U_X^{-i}T_X^{-j}\gamma(\widetilde {0})= U_X^{-i'}T_X^{-j'}\gamma'(\widetilde {0})\;\Leftrightarrow\;\gamma^{-1}U_X^{i-i'}T_X^{j-j'}\gamma'(\widetilde {0})= \widetilde {0}\;\Leftrightarrow\;\gamma^{-1}U_X^{i-i'}T_X^{j-j'}\gamma'\in \mathcal{GD}(X)_{\widetilde {0}}.
 \]
Assumption $\mathcal{GD}(X)_{\widetilde {0}}\subseteq\LR{X}$ forces $U_X^{i-i'}T_X^{j-j'}$ to belong to $\LR{X}$. Furthermore, by $\LR{X}\cap \D{X}=\{\id\}$, we obtain $U_X^{i-i'}T_X^{j-j'}=\id$. Hence, $(\gamma')^{-1}\gamma\in \dis{X}_{\widetilde {0}}$, which proves that $\gamma\dis{X}_{\widetilde {0}}=\gamma'\dis{X}_{\widetilde {0}}$. 
Finally, $\D{X}\cong\Z_n^2$ if and only if $\{U_X,T_X\}$ is a free basis of $\Z_n^2$ and hence $U_X^{i-i'}T_X^{j-j'}=\id$ if and only if $i=i'$ and $j=j'$.

``$\Rightarrow$'' It follows from 
Remark \ref{rem:cansol} and Lemma~\ref{lem:stabilizer}.
\end{proof}

We can also go in the opposite direction and construct the largest solution that maps homomorphically onto a given solution.

\begin{de}\label{def:universal}
Let $\bigoplus_\Z\Z$ be the free abelian group of rank~$\omega$ and let $\{e_i\mid i\in\Z\}$ and $\{f_i\mid i\in\Z\}$ be two disjoint subsets of~$\bigoplus_\Z\Z$ such that their union forms a free basis of~$\bigoplus_\Z\Z$. Let
\[
c_i=\begin{cases}
\sum_{k=1}^i e_k, & \text{for }i>0,\\
0, & \text{for }i=0,\\
\sum_{k=i+1}^{0} -e_{k}, & \text{for }i<0,
\end{cases}
\qquad\text{ and }\qquad
d_i=\begin{cases}
\sum_{k=1}^i f_k, & \text{for }i>0,\\
0, & \text{for }i=0,\\
\sum_{k=i+1}^{0} -f_{k}, & \text{for }i<0.
\end{cases}
\]
Then $\mathcal{S}((\bigoplus_\Z\Z)\times \Z^2,\mathbf{c},\mathbf{d})$, for ${\bf c}=(c_i)_{i\in \Z_n}$ and ${\bf d}=(d_i)_{i\in \Z_n}$, is called {\em the universal indecomposable solution of multipermutation level~$2$}.
\end{de}

We remark that, for $i\geq j$, we have $c_i-c_j=\sum_{k=j+1}^i e_j$ and $d_i-d_j=\sum_{k=j+1}^i f_j$ and hence 
$e_i=c_i-c_{i-1}$ and $f_i=d_i-d_{i-1}$, for all $i\in\Z$.
Considering that $c_0=d_0=0$, we obtain that
the free group $\bigoplus_\Z\Z$ is generated by $\{c_i,d_i\mid i\in\Z\}$ thus satisfying the conditions of Theorem~\ref{th:main2}.

\begin{proposition}\label{prop:img-universal}
    Each indecomposable solution of multipermutation level at most~$2$ is a homomorphic image of the universal indecomposable solution of multipermutation level~$2$.
\end{proposition}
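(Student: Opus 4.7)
The plan is to apply Proposition~\ref{prop:hom-img} with the universal indecomposable solution $(Y,\lambda,\rho) := \mathcal{S}((\bigoplus_\Z\Z)\times \Z^2,\mathbf{c},\mathbf{d})$ playing the role of $\mathcal{S}(G\times \Z_n^2,\mathbf{c},\mathbf{d})$. That is, I fix $G = \bigoplus_\Z\Z$ and $n = \infty$ and then verify conditions (1)--(5) of Proposition~\ref{prop:hom-img} for an arbitrary indecomposable solution $(X,\sigma,\tau)$ of multipermutation level at most~$2$.

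Conditions (1) and (2) are immediate from the hypothesis. Conditions (3) and (4) become vacuous with $n = \infty$: the index set $\Z_\infty = \Z$ is torsion-free, no divisibility constraint is imposed on the orders of $U_X$ or $T_X$, and the homomorphism built in the ``$\Leftarrow$'' direction of Proposition~\ref{prop:hom-img} remains well-defined for arbitrary permutations.

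The main step is (5). I fix any $e \in X$ and note that the target $\dis{X}/\dis{X}_e$ is abelian, because $\dis{X} \leq \LR{X}$ is abelian by Corollary~\ref{corol:LR-abelian}. Since $G = \bigoplus_\Z\Z$ is the free abelian group on the basis $\{e_i, f_i \mid i \in \Z\}$, a homomorphism $\varphi \colon G \to \dis{X}/\dis{X}_e$ is uniquely determined by arbitrary prescribed images of this basis. I set
\[
\varphi(e_i) := \sigma_{U_X^{-i}(e)}\sigma_{U_X^{-(i-1)}(e)}^{-1}\dis{X}_e,
\qquad
\varphi(f_i) := \tau_{T_X^{-i}(e)}\tau_{T_X^{-(i-1)}(e)}^{-1}\dis{X}_e,
\]
tuned so that the telescoping expressions $c_i = \sum_{k=1}^i e_k$ (for $i > 0$) and $c_i = -\sum_{k=i+1}^{0} e_k$ (for $i < 0$) from Definition~\ref{def:universal}, together with the abelianness of the target, collapse to $\varphi(c_i) = \sigma_{U_X^{-i}(e)}\sigma_e^{-1}\dis{X}_e$, and symmetrically $\varphi(d_i) = \tau_{T_X^{-i}(e)}\tau_e^{-1}\dis{X}_e$. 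These are exactly the values demanded by condition~(5).

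Finally I check surjectivity of $\varphi$ onto $\dis{X}/\dis{X}_e$ by taking $\tilde 0 := e$ and invoking Proposition~\ref{prop:gen}(ii): using $\sigma_{T^i(e)} = \sigma_{U_X^{-i}(e)}$ from~\eqref{eq:sigma_UT} (valid because $U_X$ and $T_X$ commute on a $2$-permutational solution), every generator of $\dis{X}$ listed there belongs to $\im\varphi$. The only real nuisance I anticipate is keeping track of the two sign conventions for $c_i$ and $d_i$ (positive versus negative $i$) in the telescoping calculation, but this is a routine computation, not a conceptual obstacle.
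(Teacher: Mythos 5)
Your proposal is correct and follows essentially the same route as the paper: both apply Proposition~\ref{prop:hom-img} with $G=\bigoplus_\Z\Z$ and $n=\infty$, dispose of (1)--(4) immediately, and define $\varphi$ on the free basis by $\varphi(e_i)=\sigma_{\tilde i}\sigma_{\widetilde{i-1}}^{-1}\dis{X}_{\tilde 0}$ (your $\sigma_{U_X^{-i}(e)}\sigma_{U_X^{-(i-1)}(e)}^{-1}\dis{X}_e$ is the same element by \eqref{eq:sigma_UT}), with the telescoping sum giving condition~(5). Your explicit surjectivity check via Proposition~\ref{prop:gen}(ii) is a detail the paper leaves implicit, but it is the intended argument.
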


\begin{proof}
    Let $(X, \sigma, \tau )$ be an indecomposable multipermutation solution of level at most~$2$ and choose an arbitrary element~$\tilde 0\in X$. To prove the result we use Proposition~\ref{prop:hom-img}. Note that Points (1) and (2) are assumed, Points (3) and (4) are trivial and hence we only have to check Point (5). Let $\varphi\colon \bigoplus_\Z\Z\to \dis{X}/\dis{X}_{\tilde 0}$ be such that:
    \[\varphi(e_i)=\sigma_{\tilde i}\sigma_{\widetilde{i-1}}^{-1}\dis{X}_{\tilde 0}
    \qquad\text{ and }\qquad
    \varphi(f_i)=\tau_{\tilde i}\tau_{\widetilde{i-1}}^{-1}\dis{X}_{\tilde 0}.
    \]
    Since $\{e_i:i\in\Z\}\cup\{f_i:i\in\Z\}$ is a free basis of $\bigoplus_\Z\Z$ we can uniquely extend $\varphi$ to the whole group.
    An easy computation then proves that
    the homomorphism~$\varphi$ satisfies (5).
\end{proof}

\section{Congruences}
It is well known that homomorphisms of solutions and equivalence relations preserving the structure of solutions are closely related. Let $(X,\sigma,\tau)$ be a solution. An equivalence relation $\mathord{\asymp}\subseteq X\times X$ such that for $x_1,x_2,y_1,y_2\in X$ 
\begin{align}\label{congr}
&x_1\asymp x_2\;\; {\rm and} \;\; y_1\asymp y_2\quad \Rightarrow\quad \sigma^{\varepsilon}_{x_1}(y_1)\asymp \sigma^{\varepsilon}_{x_2}(y_2)\quad {\rm and}\quad \tau^{\varepsilon}_{x_1}(y_1)\asymp \tau^{\varepsilon}_{x_2}(y_2),
\end{align}
where $\varepsilon\in \{-1,1\}$, is called a \emph{congruence} of the solution $(X,\sigma,\tau)$.

If $\Phi\colon Y\to X$ is a homomorphism
from a~solution~$(Y,\lambda,\rho)$ to a solution~$(X,\sigma,\tau)$ then the \emph{kernel} of $\Phi$, defined by 
\begin{align*}
&x_1\ker\Phi~x_2\quad \Leftrightarrow\quad \Phi(x_1)=\Phi(x_2)
\end{align*}
is a congruence of~$(Y,\lambda,\rho)$. 
On the other hand, for a congruence $\asymp$ of the solution $(Y,\lambda,\rho)$ 
let $\chi\colon Y\to Y/\mathord{\asymp}$ defined by $y\mapsto y_{\mathord{\asymp}}$ be the \emph{natural projection} from the solution to the quotient one. Clearly, $\chi$ is onto and $\ker\chi=\mathord{\asymp}$.

Moreover, for any epimorphism $\Phi\colon Y\to X$, the solution~$(X,\sigma,\tau)$ is isomorphic to the quotient solution $(Y,\lambda,\rho)/{\ker\Phi}$. Hence, a solution is a homomorphic image of a solution $(Y,\lambda,\rho)$ if and only if it is isomorphic to a quotient of  $(Y,\lambda,\rho)$ by some congruence. Thus the problem of finding all homomorphic images of  $\mathcal{S}(G\times\Z_n^2,\mathbf{c},\mathbf{d})$ reduces to the problem of finding all congruences of $\mathcal{S}(G\times\Z_n^2,\mathbf{c},\mathbf{d})$.
Since every epimorphism of solutions induces an epimorphism of the associated permutation groups, instead of looking directly at the solution homomorphism, we shall be analyzing the associated group homomorphism.
A group homomorphism sends subgroups onto subgroups and therefore, since the group $\GD{Y}$ is a semidirect product, we need to understand how to describe subgroups of a semidirect product. For this we use a classical result by Kurt Rosenbaum.
\begin{theorem}\cite{R88}\label{th:Rosenbaum}
	A nonempty set~$W$ of elements of a semidirect product of groups  $G=K\ltimes N$ is a subgroup of~$G$ if and only if
	\begin{enumerate}
		\item the sets $K_0=WN\cap K$ and $K_1=W\cap K$ are subgroups of~$G$,
		\item the set $N_1=W\cap N$ is a subgroup of $G$ and the set $N_0=WK\cap N$ is a collection of (not necessarily all) left $N_1$-cosets in~$N$,
		\item there is a 1-1 mapping $\eta$ from the set of all right $K_1$-cosets in~$K_0$ into the set
		of left $N_1$-cosets in~$N_0$
		that maps $K_1g$ onto the coset $nN_1$, with $(g,n)\in W$, satisfying $\eta(g_1g_2K_1)=g_2^{-1}\eta(g_1K_1)g_2\eta(g_2K_1)$.	
	\end{enumerate}
\end{theorem}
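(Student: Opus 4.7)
The plan is to exploit the unique decomposition of every element of the semidirect product $G = K \ltimes N$ as a product $kn$ with $k \in K$, $n \in N$, and to use the canonical surjection $\pi \colon G \to K$ with kernel $N$ to slice $W$ over~$K$. Both directions will be obtained by relating the subgroup structure of~$W$ to the fibers of $\pi|_W$ and to the way closure under multiplication constrains these fibers.

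For the forward direction, I will assume that $W$ is a subgroup. Then $K_0 = WN \cap K$ coincides with the image $\pi(W)$, which is a subgroup of~$K$, while $K_1 = W \cap K$ and $N_1 = W \cap N$ are subgroups as intersections of subgroups. For each $k \in K_0$ I will examine the fiber $\{n \in N : kn \in W\}$: whenever two elements $kn_1, kn_2$ belong to it, the product $(kn_1)^{-1}(kn_2) = n_1^{-1} n_2$ lies in $W \cap N = N_1$, so the fiber is exactly one left coset of~$N_1$; moreover, multiplying $k$ on the left by an element of $K_1 \subseteq W$ preserves the fiber, so it depends only on the right coset $K_1 k$. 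This yields the well-defined correspondence $\eta$, and $N_0$ is realized as the union of these cosets, confirming it is a collection of left $N_1$-cosets. The cocycle-type relation then follows from multiplying two elements of~$W$ and extracting the $N$-part of the product via $(k_1 n_1)(k_2 n_2) = (k_1 k_2)\bigl((k_2^{-1} n_1 k_2) n_2\bigr)$.

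For the reverse direction, I will define $W := \{kn \mid k \in K_0,\ nN_1 = \eta(K_1 k)\}$ and check that it is a subgroup. The identity belongs to~$W$ because specializing the cocycle to $g_1 = g_2 = e$ forces $\eta(K_1)$ to satisfy $\eta(K_1) = \eta(K_1)\eta(K_1)$, which, being a single coset, must be $N_1$. Closure under multiplication follows at once from the cocycle identity, since the $N$-part of the product computed above lands in the prescribed coset. Closure under inversion will be handled by specializing the cocycle to $g_1 = g$, $g_2 = g^{-1}$ to relate $\eta(K_1 g^{-1})$ to $\eta(K_1 g)$, so that $(kn)^{-1} = k^{-1}\cdot(kn^{-1}k^{-1})$ lies in~$W$.

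The main obstacle I anticipate is the bookkeeping. The asymmetry between right $K_1$-cosets in $K_0$ and left $N_1$-cosets in~$N$ reflects the twisting action of $K$ on $N$, and the cocycle identity must be shown to make sense at the level of cosets rather than merely on representatives; a careless convention switch (left versus right coset, or the $kn$ versus $nk$ parametrization) produces a formula that superficially resembles the target but is not equivalent. Verifying that the correspondence $\eta$ is compatible with the conjugation action of $K_0$ on $N$---in particular that the right-hand side of the cocycle genuinely lies in a single left $N_1$-coset---is where the substantive content of the theorem resides. Once this compatibility is in place, both implications reduce to direct, if somewhat intricate, computations.
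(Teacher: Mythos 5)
This theorem is quoted from Rosenbaum \cite{R88} and the paper states it without proof, so there is no internal argument to measure you against; your proposal has to stand on its own. The route you take is the natural (and, as far as I can tell, the intended) one: slice $W$ along the projection $\pi\colon G\to K$ with kernel $N$, identify $K_0=WN\cap K$ with $\pi(W)$, show each nonempty fibre $\{n\in N: gn\in W\}$ is a single left $N_1$-coset depending only on $K_1g$, and read the cocycle off $(g_1n_1)(g_2n_2)=g_1g_2\,(g_2^{-1}n_1g_2)\,n_2$. Your converse (identity from $\eta(K_1)\eta(K_1)=\eta(K_1)$, closure from the cocycle, inverses from the specialization $g_1=g$, $g_2=g^{-1}$) also goes through, provided you state explicitly that conditions (1)--(3) pin $W$ down as $\{gn: g\in K_0,\ n\in\eta(K_1g)\}$; well-definedness of $\eta$ already forces the whole fibre of $W$ over $g$ into one coset, but you should say so. You should also record the one-line injectivity check: $\eta(K_1g_1)=\eta(K_1g_2)=nN_1$ gives $(g_1n)(g_2n)^{-1}=g_1g_2^{-1}\in W\cap K=K_1$.

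The one substantive point you name but do not discharge is that the right-hand side $g_2^{-1}\eta(K_1g_1)g_2\,\eta(K_1g_2)$ of the cocycle is a \emph{single} left $N_1$-coset, and the missing ingredient is the same one that resolves your bookkeeping worry about $N_0$: every $w\in W$ normalizes $N_1=W\cap N$, since conjugation by $w$ preserves both the subgroup $W$ and the normal subgroup $N$. Writing $w=g_2n_2$ this gives $g_2^{-1}N_1g_2=n_2N_1n_2^{-1}$, whence $(g_2^{-1}n_1g_2)(g_2^{-1}N_1g_2)n_2N_1=(g_2^{-1}n_1g_2)\,n_2N_1$ and the cocycle identity holds at the level of cosets. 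The same fact is needed for $N_0$: with the parametrization $w=gn$ one computes $WK\cap N=\{gng^{-1}: gn\in W\}=\bigcup_g g\,\eta(K_1g)\,g^{-1}$, which is \emph{not} literally $\bigcup_g\eta(K_1g)$ (that set is $KW\cap N$); it is again the identity $g N_1 g^{-1}=(gng^{-1})^{-1}N_1(gng^{-1})$ that turns each conjugate $g(nN_1)g^{-1}$ into an $N_1$-coset of $gng^{-1}$. So your suspicion that a careless convention switch is the danger is exactly right, and the transcription in the statement (which mixes $K_1g$ with $g_1K_1$, and $WK$ with $KW$) does not resolve it for you; once you prove and systematically invoke the normalization of $N_1$ by $W$, both directions of your argument close up.
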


\begin{theorem}\label{thm:import}
Let $(Y,\lambda,\rho)=\mathcal{S}(G\times \Z_n^2,\mathbf{c}, \mathbf{d})$ be a solution for some $n\in \mathbb{N}$, a group $(G,+,0)$ and sequences   $\mathbf{c}, \mathbf{d}\in G^n$ and let $\asymp$ be a congruence of $(Y,\lambda,\rho)$. 
Then there exist
\begin{enumerate}
\item  a subgroup~$H$ of~$(G,+,0)$, 
\item a~subgroup~$S$ of~$(\Z_n^2,+)$,
\item a homomorphism $\Theta:S\to G/H$ such that
$c_{i-i'+k}+H=c_k+ H$ and $d_{i-i'+k}+H=d_k+H$, for any $k\in\Z_n$ whenever $(i,i')\in S$, and
\[(a,i,i')\asymp(b,j,j')\Longleftrightarrow (i-j,i'-j')\in S \text{ and } (a-b)+H=\Theta((i-j,i'-j')).\]
\end{enumerate}
On the other hand, a triple: subgroups $S\leq\Z_n^2$ and $H\leq G$ and a homomorphism $\Theta\colon S\to G/H$ 
described by $(1)-(3)$ defines a congruence of $\mathcal{S}(G\times\Z_n^2,\mathbf{c},\mathbf{d})$.
\end{theorem}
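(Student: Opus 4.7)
The strategy is to exploit the fact that the additive group structure on $Y = G \times \Z_n^2$ produces a full family of solution automorphisms, reducing the theorem to a classification of compatible subgroups of $G \times \Z_n^2$. The first step I would carry out is a direct verification that, for every $y_0 = (a_0, i_0, i_0') \in Y$, the translation $\Phi_{y_0}\colon (b, j, j') \mapsto (a_0 + b, i_0 + j, i_0' + j')$ is a solution automorphism; this works precisely because $c_{i - i' + j' - j - 1}$ (and similarly the shift for $\rho$) depends on the parameters only through the translation-invariant differences $i - i'$ and $j' - j$. Since automorphisms preserve congruences in both directions, I would conclude that for any congruence $\asymp$, one has $y_1 \asymp y_2 \Leftrightarrow (y_2 - y_1) \asymp (0,0,0)$, so that $\asymp$ is determined by $K := \{y \in Y : y \asymp (0,0,0)\}$; the equivalence-relation axioms then force $K$ to be a subgroup of the additive group $G \times \Z_n^2$.

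For the forward direction I would extract the triple from $K$ via a Goursat-type decomposition: put $H := \{a \in G : (a,0,0) \in K\}$, let $S$ be the projection of $K$ onto $\Z_n^2$, and define $\Theta\colon S \to G/H$ by $\Theta((i, i')) := a + H$ for any $a$ with $(a, i, i') \in K$. Well-definedness and the homomorphism property of $\Theta$, together with the reconstruction $K = \{(a, i, i') : (i, i') \in S,\ a + H = \Theta((i, i'))\}$, all follow immediately from $K$ being a subgroup, and yield the displayed characterization of $\asymp$. To obtain the compatibility condition on $\mathbf{c}$, given $(i, i') \in S$ I would pick $(a, i, i') \in K$ and apply congruence-preservation under $\lambda$ to $(a, i, i') \asymp (0,0,0)$ and the trivial pair $(b, j, j') \asymp (b, j, j')$; this gives that $(c_{i - i' + j' - j - 1} - c_{j' - j - 1},\,0,\,0) \in K$, hence $c_{i - i' + k} + H = c_k + H$ for every $k = j' - j - 1 \in \Z_n$. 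The analogous argument with $\rho$ yields the condition on $\mathbf{d}$.

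For the converse direction, given $(H, S, \Theta)$ satisfying (1)--(3), I would set $K := \{(a, i, i') : (i, i') \in S,\ a + H = \Theta((i, i'))\}$ and observe by a direct check (using that $S, H$ are subgroups and $\Theta$ is a homomorphism) that $K$ is a subgroup of $G \times \Z_n^2$; hence $y_1 \asymp y_2 \Leftrightarrow y_2 - y_1 \in K$ is an equivalence relation. The essential step is to verify preservation under $\lambda^{\pm 1}$ and $\rho^{\pm 1}$: writing $\delta y := y_2 - y_1 = (\delta a, \delta i, \delta i') \in K$ and $\delta z := z_2 - z_1 = (\delta b, \delta j, \delta j') \in K$, a short computation gives
\[
\lambda_{y_2}(z_2) - \lambda_{y_1}(z_1) \;=\; \bigl(\delta b + c_{k + (\delta i - \delta i') - (\delta j - \delta j')} - c_k,\; \delta j,\; \delta j'\bigr),
\]
where $k = i_1 - i_1' + j_1' - j_1 - 1$. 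Since $(\delta i - \delta j,\,\delta i' - \delta j') \in S$ by the subgroup property, the compatibility condition on $\mathbf{c}$ forces $c_{k + (\delta i - \delta i') - (\delta j - \delta j')} \equiv c_k \pmod H$, while $\delta b + H = \Theta((\delta j, \delta j'))$ by membership of $\delta z$ in $K$; combining these shows that the displayed element lies in $K$. The cases $\lambda^{-1}, \rho, \rho^{-1}$ are analogous.

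The main obstacle is purely bookkeeping; the substantive insight is the identification of translations on $G \times \Z_n^2$ with solution automorphisms, after which the entire theorem reduces to describing subgroups of a direct product compatible with the prescribed sequences $\mathbf{c}$ and $\mathbf{d}$. In particular, Rosenbaum's theorem plays no essential role here, because $G \times \Z_n^2$ is a direct product rather than a nontrivial semidirect product.
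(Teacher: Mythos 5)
Your reduction to the subgroup $K=\{y\in Y: y\asymp(0,0,0)\}$ rests on the claim that every congruence satisfies $y_1\asymp y_2\Leftrightarrow y_2-y_1\asymp(0,0,0)$, justified by ``automorphisms preserve congruences in both directions.'' That inference is not valid: an automorphism $\phi$ carries a congruence $\asymp$ to the (generally different) congruence $\{(\phi(x),\phi(y)):x\asymp y\}$; it does not fix each congruence. If the principle were true, every congruence of any algebra with a transitive abelian automorphism group (e.g.\ a bare set with no operations) would be determined by a single class, which is false. Translation-invariance of $\asymp$ is therefore not a formal consequence of your (correct) observation that the translations of $G\times\Z_n^2$ are solution automorphisms; it is precisely the nontrivial content of the forward direction, and it is where the paper has to work: there it is the commutator computation $[b,U_Y^{i-j}T_Y^{i'-j'}]\in N_1$, resting on Lemma~\ref{lm:center}, that shows the membership condition depends only on differences. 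Note also that invariance of $\asymp$ under $\mathcal{G}(Y)$ or $\GD{Y}$ does not rescue you: the $G$-direction translations do not lie in $\GD{Y}$, since the maps $\lambda_{(0,m,0)}U_Y$ shift the first coordinate by $c_{m+j'-j}$, an amount depending on the fibre.

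The gap is repairable by an elementary argument, but one that genuinely uses the two-sided congruence condition and the hypothesis $G=\langle c_i,d_i: i\in\Z_n\rangle$. From $(a,p,p')\asymp(b,p,p')$ and $x\asymp x$, applying $\lambda_x^{\pm1}$, $\rho_x^{\pm1}$ and then $U_Y,T_Y$ shows each fibre relation is stable under adding any $c_m$ or $d_m$ to both entries, hence under all of $G$; so each fibre class is a coset of a subgroup $H$, the same on every fibre by $U_Y,T_Y$-invariance. One must then derive $c_{(i-i')-(j-j')+m}-c_m\in H$ directly from $(a,i,i')\asymp(b,j,j')$ (by comparing $\lambda_{(a,i,i')}(z)$ with $\lambda_{(b,j,j')}(z)$) \emph{before} concluding translation-invariance in the $G$-direction: that compatibility condition is an ingredient of the proof, not a corollary of it as your write-up has it. With such a lemma in place, your Goursat-style extraction of $(H,S,\Theta)$ is sound, and your converse computation is correct and essentially identical to the paper's; the resulting argument would indeed bypass Rosenbaum's theorem, at the cost of redoing by hand, fibre by fibre, the centrality information that the paper packages into the semidirect product $\GD{Y}=\D{Y}\ltimes\LR{Y}$.
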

  \begin{proof}
Let $\asymp$ be a congruence of $(Y,\lambda,\rho)$ and $(X,\sigma,\tau):=(Y,\lambda,\rho)/_\asymp$. 
Then $(X,\sigma,\tau)$ is a homomorphic image of $(Y,\lambda,\rho)$ along with the natural projection $\Phi$. Let $e:=\Phi((0,0,0))=(0
,0,0)_{\asymp}$.
According to Proposition~\ref{prop:hom-img}, there exists
a homomorphism $\varphi:G\to\mathcal{LR}(X)/\LR{X}_{e}$ and
$\Phi((a,i,i'))=U_X^{-i}T_X^{-i'}(\varphi(a)(e))$.
By Lemma \ref{lm:GisoLR}, $(G,+,0)\cong \mathcal{LR}(Y)/\mathcal{LR}(Y)_{(0,0,0)}$ by some isomorphism $\iota$. Without loss of generality, we can assume $(G,+,0)=\mathcal{LR}(Y)/\mathcal{LR}(Y)_{(0,0,0)}$. 



We first describe the congruence class where the element $(0,0,0)$ belongs to. Since $e=\Phi((0,0,0))$, we have $(a,i,i')\asymp (0,0,0)$ if and only if $\Phi((a,i,i'))=e$
if and only if $U_X^{-i}T_X^{-i'}\varphi(a(e))=e$
if and only if $U_X^{-i}T_X^{-i'}\varphi(a)\subseteq \GD{X}_{e}$. 
Just now the question: ``How to describe the subgroup $\GD{X}_{e}$?'' arises. 

Obviously
$\GD{X}_{e}\cap \LR{X}$ is a subgroup of $\GD{X}$. Clearly $\GD{X}_{e}\cap \LR{X}=\LR{X}_e$. 
The other intersection, namely $\D{X}\cap \GD{X}_e$ is
also a subgroup of $\GD{X}$, namely $\D{X}\cap \GD{X}_e=\D{X}_e$.

Now we are interested in the preimage of $\GD{X}_e$ under the homomorphism $\Phi_{\mathcal{GD}}$. It is easy to see that it is a subgroup
of the semidirect product $\GD{Y}=\D{Y}\ltimes \LR{Y}$. Using the notation from Theorem~\ref{th:Rosenbaum}, let us denote: 
\begin{itemize}
\item $W:=\Phi_{\mathcal{GD}}^{-1}(\GD{X}_{e})$ 
\item $N:=\LR{Y}$ 
\item $K:=\D{Y}$ 
\item $K_1:=W\cap K=\Phi_{\mathcal{GD}}^{-1}(\GD{X}_{e})\cap\D{Y}=\{\alpha\in \D{Y}:\Phi_{\mathcal{D}}(\alpha)\in \D{X}_e\}$ 
\item $K_0=:WN\cap K=\{\alpha\in\D{Y}: \exists(\beta\in\LR{Y})\;\Phi_{\mathcal{GD}}(\alpha\beta)\in \GD{X}_e\}$ 
\item $N_1=W\cap N=\Phi_{\mathcal{GD}}^{-1}(\GD{X}_{e})\cap\LR{Y}=\{\alpha\in \LR{Y}:\Phi_{\mathcal{LR}}(\alpha)\in \LR{X}_e\}$
\item $N_0:=WK\cap N=\{\alpha\in\LR{Y}: \exists(\beta\in\D{Y})\;\Phi_{\mathcal{GD}}(\alpha\beta)\in \GD{X}_e\}$
\end{itemize}
Now
\begin{multline*}
N_1=\Phi_{\mathcal{GD}}^{-1}(\GD{X}_{e})\cap \LR{Y}
=\Phi_{\mathcal{GD}}^{-1}(\GD{X}_{e}\cap\LR{X})\cap \LR{Y}\\
=\Phi_{\mathcal{GD}}^{-1}(\LR{X}_e)\cap \LR{Y}=\Phi_{\mathcal{LR}}^{-1}(\LR{X}_e).
\end{multline*}

Next we prove that $[g,\alpha]\in N_1$, for each $\alpha \in \LR{Y}$ and $g\in K_0$. First note that, for $g\in K_0$, we have $g\in \D{Y}$ and there is $\beta\in \LR{Y}$ such that $\Phi_{\mathcal{GD}}(g\beta)\in \GD{X}_e$. Since $\Phi_{\mathcal{GD}}$ is a homomorphism we obtain $\Phi_{\mathcal{GD}}(g)\Phi_{\mathcal{GD}}(\beta)\in \GD{X}_e$. By Lemma \ref{lm:center}, $\Phi_{\mathcal{GD}}(g)\in Z(\GD{X})$. This gives that $\Phi_{\mathcal{GD}}([g,\alpha])=[\Phi_{\mathcal{GD}}(g),\Phi_{\mathcal{GD}}(\alpha)]=\id_X$. Hence, $\Phi_{\mathcal{GD}}([g,\alpha])\in \GD{X}_e$, and $[g,\alpha]\in W=\Phi_{\mathcal{GD}}^{-1}(\GD{X}_{e})$. Further, since the group $N=\LR{Y}$ is a normal subgroup of $\GD{Y}$ and $\alpha\in \LR{Y}$, we obtain $[g,\alpha]\in N$, and in  consequence, $[g,\alpha]\in W\cap N=N_1$.

Let $\eta\colon K_0/ K_1\to N_0/N_1$ be the mapping from Theorem~\ref{th:Rosenbaum}, where $\eta(gK_1):=nN_1$, with $g\in K_0$, $n\in N_0$ and $gn\in W$. Let $\eta(g_1K_1):=n_1N_1$ and $\eta(g_2K_1):=n_2N_1$, for some $g_1,g_2\in K_0$ and $n_1, n_2\in N_0$. Since $[g_2,n_1]\in N_1$ 
and $[g_2,N_1]\subseteq N_1$, 
it turns out that $\eta$ is a homomorphism:
\begin{multline*}
\eta(g_1g_2K_1)=g_2^{-1}\eta(g_1K_1)g_2\eta(g_1K_1)=g_2^{-1}n_1N_1g_2n_2N_1=g_2^{-1}n_1g_2n_2N_1\\=g_2^{-1}n_1g_2n_1^{-1}n_1n_2N_1=n_1n_2N_1=\eta(g_1K_1)\eta(g_2K_1),
\end{multline*}
for all $g_1,g_2\in K_0$ and $n_1,n_2\in N_0$.

The homomorphism $\eta$ from $K_0/ K_1$ to $N/N_1$ can be naturally extented into a homomorphism $\overline{\eta}$ from the group $K_0$ to the quotient $N/N_1$. By Remark \ref{rem:cansol}, there is an isomorphism $\xi\colon \Z_n^2\to \D{Y}$ with $\xi((i,j))= U_Y^{-i}T_Y^{-j}$. Moreover, since 
$G= N/N_{(0,0,0)}$ and $N_1\geq N_{(0,0,0)}$,
we can take~$S=\xi^{-1}(K_0)$, $H=N_1/N_{(0,0,0)}$ and $\Theta=\overline{\eta}\xi|_S$. By the 3rd Isomorphism Theorem, $\Theta\colon S\to N/N_1\cong G/H$.

Let us now take two elements $(a,i,i')$ and $(b,j,j')$ from $G\times\Z_n^2$ 
such that $a((0,0,0))=(a,0,0)$ and $b((0,0,0))=(b,0,0)$. Then 
\begin{multline*}
	(a,i,i')\asymp (b,j,j') \Leftrightarrow
	\Phi((a,i,i'))=\Phi((b,j,j'))
	\Leftrightarrow\\
	\Phi(U_Y^{-i}T_Y^{-i'}a((0,0,0)))=
	\Phi(U_Y^{-j}T_Y^{-j'}b((0,0,0)))\\
	\Leftrightarrow
	\Phi_{\mathcal{GD}}(U_Y^{-i}T_Y^{-i'}a)\Phi((0,0,0))=
    \Phi_{\mathcal{GD}}(U_Y^{-j}T_Y^{-i'}b)\Phi((0,0,0))\\
	\Leftrightarrow
    U_X^{-i}T_X^{-i'}\Phi_{\mathcal{GD}}(a)\Phi((0,0,0))=
   U_X^{-j}T_X^{-j'}\Phi_{\mathcal{GD}}(b)\Phi((0,0,0))\\
	\Leftrightarrow
    (\Phi_{\mathcal{GD}}(b))^{-1}U_X^{j-i}T_X^{j'-i'}\Phi_{\mathcal{GD}}(a)\Phi((0,0,0))=\Phi((0,0,0))\\
	\Leftrightarrow
    (\Phi_{\mathcal{GD}}(b))^{-1}U_X^{j-i}T_X^{j'-i'}\Phi_{\mathcal{GD}}(a)\in \GD{X}_e\\
	\Leftrightarrow
    \Phi_{\mathcal{GD}}(b^{-1}U_Y^{j-i}T_Y^{j'-i'}a)\in \GD{X}_e\
	\Leftrightarrow\
    b^{-1}U_Y^{j-i}T_Y^{j'-i'}a\in \Phi_{\mathcal{GD}}^{-1}(\GD{X}_e)=W.
    \end{multline*}
This implies that $U^{j-i}T^{j'-i'}\in K_0$ and hence $[b,U^{i-j}T^{i'-j'}]\in N_1$.
Note that
\[
b^{-1}U_Y^{j-i}T_Y^{j'-i'}a=b^{-1}U_Y^{j-i}T_Y^{j'-i'}b U_Y^{i-j}T_Y^{i'-j'}U_Y^{j-i}T_Y^{j'-i'}b^{-1}a\quad \Rightarrow\quad U_Y^{j-i}T_Y^{j'-i'}b^{-1}a\in W,
\]
since 
\[b^{-1}U_Y^{j-i}T_Y^{j'-i'}b U_Y^{i-j}T_Y^{i'-j'}U_Y^{j-i}T_Y^{j'-i'}b^{-1}a=[b,U_Y^{i-j}T_Y^{i'-j'}]U_Y^{j-i}T_Y^{j'-i'}b^{-1}a
.
\]
In particular, \begin{align*}
	(a,0,0)\asymp (0,0,0)\quad \Leftrightarrow\quad a\in W\quad\Rightarrow \quad a\in N_1.
    \end{align*}
Further,
    \begin{multline*}
	b^{-1}U_Y^{j-i}T_Y^{j'-i'}a\in \Phi_{\mathcal{GD}}^{-1}(\GD{X}_e)=W \quad  \Leftrightarrow\quad 
    	U_Y^{j-i}T_Y^{j'-i'}\in K_0 \land U_Y^{j-i}T_Y^{j'-i'}b^{-1}a\in W 
	\Leftrightarrow\\ 
	U_Y^{j-i}T_Y^{j'-i'}\in K_0 \land \overline{\eta}(U_Y^{j-i}T^{j'-i'}) =b^{-1}a N_1
	\Leftrightarrow
	(j-i,j'-i')\in S \land \Theta((j-i,j'-i'))=b^{-1}aH.
\end{multline*}

Finally, if $(i,i')\in S$ then $U_Y^{-i}T_Y^{-i'}\in K_0$ and there is $\alpha\in N$ such that $U_Y^{-i}T_Y^{-i'}\alpha\in W$. Hence by Lemma 
\ref{lem:alpha} we have, for each $j\in\Z_n$:
\begin{multline*}
\Phi((c_{i-i'+j},0,0))=\Phi(\textit{Ł}_{(0,j,0)}\textit{Ł}_{(0,i,i')}((0,0,0)))
=\Phi_{\mathcal{GD}}(\textit{Ł}_{(0,j,0)}\textit{Ł}_{U_Y^{-i}T_Y^{-i'}((0,0,0))})\Phi((0,0,0))\\
=\Phi_{\mathcal{GD}}(\textit{Ł}_{(0,j,0)}\textit{Ł}_{U_Y^{-i}T_Y^{-i'}\alpha((0,0,0))})\Phi((0,0,0))
=L_{\Phi((0,j,0))}L_{\Phi_{\mathcal{GD}}(U_Y^{-i}T_Y^{-i'}\alpha)\Phi((0,0,0)))})\Phi((0,0,0))\\
=L_{\Phi((0,j,0))}L_{\Phi_{\mathcal{GD}}(U_Y^{-i}T_Y^{-i'}a)(e)}(e)=L_{\Phi((0,j,0))}L_e(e)=L_{\Phi((0,j,0))}(e)=L_{\Phi((0,j,0))}\Phi((0,0,0))\\
=\Phi(\textit{Ł}_{(0,j,0)}((0,0,0)))=\Phi((c_j,0,0)).
\end{multline*}
Hence $(c_{i-i'+j},0,0)\asymp (c_j,0,0)$ and therefore
$c_{i-i'+j}-c_j\in \Theta((0,0))=H$. The argument is the same to show that $d_{i-i'+j}-d_j\in H$.
This finishes the proof of the forward implication. 
\vskip 2mm

For the backward implication let $(a,i,i'), (b,j,j'), (g,k,k'), (h,l,l')\in G\times \Z_n^2$ and suppose $(a,i,i')\asymp(b,j,j')$ and $(g,k,k')\asymp(h,l,l')$ where $\asymp$ is defined in~(3). Then $(i-j,i'-j'), (k-l,k'-l')\in S$, $(g-h)+H=\Theta((k-l,k'-l'))$ and $c_{i-j-i'+j'+A}+H=c_A+H$ and $c_{k-l+l'-k'+B}+H=c_B+H$, for all $A,B\in \Z_n$. We prove that $\asymp$ is a congruence of $(Y,\lambda,\rho)$. Clearly we have
\begin{align*}
    \lambda_{(a,i,i')}((g,k,k'))&=(g+c_{i-i'+k'-k-1},k+1,k'),\\
    \lambda_{(b,j,j')}((h,l,l'))&=(h+c_{j-j'+l'-l-1},l+1,l').
\end{align*}
These two elements are equivalent modulo $\asymp$ since $(k+1-(l+1),k'-l')\in S$ and taking $A=B=k'-k+j-j'-1$ we obtain: 
$c_{i-i'+k'-k-1}+H=c_{j-j'+k'-k-1}+H=c_{j-j'+l'-l-1}
+H$. Hence $g+c_{i-i'+k'-k-1}-(h+c_{j-j'+l'-l-1})+H=g-h+H=\Theta((k-l,k'-l'))$.
Similarly we can repeat the consideration for the operation $\rho$, which completes the proof.
\end{proof}

Let $\asymp$ be a congruence of a solution $(Y,\lambda,\rho)=\mathcal{S}(G\times \Z_n^2,\mathbf{c}, \mathbf{d})$ determined by subgroups $S\leq\Z_n^2$ and $H\leq G$ and a homomorphism $\Theta\colon S\to G/H$. Assume that $(X,\sigma,\tau):=(Y,\lambda,\rho)/_\asymp$. Then $(X,\sigma,\tau)$ is a homomorphic image of $(Y,\lambda,\rho)$ along with the natural projection $\Phi\colon Y\to Y/\mathord{\asymp}$.
\begin{lemma}\label{lem:7.3}
 Let $e:=\Phi((0,0,0))=(0,0,0)/\mathord{\asymp}$. Then:
 \begin{itemize}
 \item   $\dis{X}/\dis{X}_e\cong \mathcal{LR}(X)/\mathcal{LR}(X)_e\cong G/H$,
 \item $\mathcal{D}(X)\cong \Z_n^2/\Ker\Theta$,
 \item $\mathcal{GD}(X)/(\mathcal{GD}(X)_e\mathcal{LR}(X))\cong \Z_n^2/S$.
 \end{itemize}
 In particular, $(i,j)\in S$ if and only if there is $\alpha\in \LR{X}$ such that $U^iT^j\alpha\in (\mathcal{GD}(X)_e$.
\end{lemma}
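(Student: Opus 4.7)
The plan is to push each of the three identifications through the surjective group homomorphism $\Phi_{\mathcal{GD}}\colon\GD{Y}\to\GD{X}$ and its restrictions to $\LR{Y}$ and $\D{Y}$, combined with the explicit description of~$\asymp$ furnished by Theorem~\ref{thm:import}. Two preliminary observations are needed. Since $(X,\sigma,\tau)$ is a homomorphic image of a $2$-permutational solution it is itself $2$-permutational, so $\LR{X}$ is abelian by Corollary~\ref{corol:LR-abelian}. Furthermore $U_X,T_X$ are commuting automorphisms, so $\D{X}$ is abelian; combined with $\GD{X}=\D{X}\LR{X}$ coming from Lemma~\ref{lem:UTLR}, this forces $\GD{X}/\LR{X}$ to be abelian, whence every subgroup between $\LR{X}$ and $\GD{X}$---in particular $\GD{X}_e\LR{X}$---is normal in $\GD{X}$. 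This is what makes the third quotient in the statement a genuine group.

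For the first line, the restriction $\Phi_{\mathcal{LR}}:=\Phi_{\mathcal{GD}}|_{\LR{Y}}$ is a surjection onto $\LR{X}$ whose preimage of $\LR{X}_e$ is exactly the subgroup $N_1$ appearing in the proof of Theorem~\ref{thm:import}; that proof identified $N_1/\LR{Y}_{(0,0,0)}$ with $H$ under the isomorphism $G\cong\LR{Y}/\LR{Y}_{(0,0,0)}$ of Lemma~\ref{lm:GisoLR}. A double application of the isomorphism theorems then yields $\LR{X}/\LR{X}_e\cong\LR{Y}/N_1\cong G/H$, and Lemma~\ref{lem:isom-LR-dis} supplies the identification with $\dis{X}/\dis{X}_e$. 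For the second line I would introduce $\mu\colon\Z_n^2\to\D{X}$ by $\mu((i,j))=U_X^iT_X^j$, which is a surjective homomorphism thanks to commutativity of $U_X,T_X$ and $U_X^n=T_X^n=\id_X$. The action on a class is $U_X^iT_X^j((a,k,k')/{\asymp})=(a,k-i,k'-j)/{\asymp}$, so $\mu((i,j))=\id_X$ unwinds, via the description of $\asymp$ in Theorem~\ref{thm:import}, to $(-i,-j)\in S$ together with $\Theta((-i,-j))=H$, i.e.\ $(i,j)\in\Ker\Theta$. The first isomorphism theorem then gives $\D{X}\cong\Z_n^2/\Ker\Theta$.

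For the third isomorphism and the ``in particular'' clause, define $\pi\colon\Z_n^2\to\GD{X}/(\GD{X}_e\LR{X})$ by $\pi((i,j))=U_X^iT_X^j\,\GD{X}_e\LR{X}$. It is a homomorphism by the already-established normality, and surjective because every element of $\GD{X}$ is of the form $U_X^rT_X^s\alpha'$ with $\alpha'\in\LR{X}$ by Lemma~\ref{lem:UTLR}. To compute the kernel, observe that the explicit formulas for $\textit{Ł}$ and $\slR$ show that every element of $\LR{Y}$ fixes the last two coordinates, so any $\alpha\in\LR{X}$ lifts to some $\alpha'\in\LR{Y}$ with $\alpha'((0,0,0))=(a,0,0)$ for a suitable $a\in G$; consequently $U_X^iT_X^j\alpha(e)=\Phi((a,-i,-j))$. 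The condition $U_X^iT_X^j\alpha\in\GD{X}_e$ then translates to $(a,-i,-j)\asymp(0,0,0)$, which by Theorem~\ref{thm:import} is solvable in $a$ exactly when $(-i,-j)\in S$, equivalently $(i,j)\in S$. Hence $\Ker\pi=S$, and the ``in particular'' clause is precisely the statement of this kernel computation.

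The main bookkeeping obstacle will be aligning the three preimage identifications ($\Phi_{\mathcal{LR}}^{-1}(\LR{X}_e)=N_1$, the quotient $N_1/\LR{Y}_{(0,0,0)}=H$, and the lifting of $\alpha$ through $\Phi_{\mathcal{LR}}$ in the third part) while keeping sign conventions coherent; the normality of $\GD{X}_e\LR{X}$ is the only nonobvious structural input, and once it is in place each of the three identifications is a one-line application of an isomorphism theorem.
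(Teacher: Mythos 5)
Your proof is correct, and while the overall strategy (push everything through $\Phi_{\mathcal{GD}}$ and apply isomorphism theorems) matches the paper's, the execution of the second and third bullets is genuinely different. For $\LR{X}/\LR{X}_e\cong G/H$ you do essentially what the paper does: pull $\LR{X}_e$ back to the Rosenbaum subgroup $N_1$, quotient twice, and let Lemma~\ref{lem:isom-LR-dis} handle the displacement-group version. For the other two bullets the paper stays upstairs in $\GD{Y}$: it applies the second isomorphism theorem to get $\GD{Y}/WN\cong K/K_0$, identifies $K_0$ with $S$ and $K_1$ with $\Ker\Theta$, and invokes Proposition~\ref{prop:aut-regular} to conclude that $\D{X}_e$ is trivial, which is what turns $\D{X}\cong K/K_1$ into $\Z_n^2/\Ker\Theta$. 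You instead work downstairs in $X$, defining explicit homomorphisms $\mu\colon\Z_n^2\to\D{X}$ and $\pi\colon\Z_n^2\to\GD{X}/(\GD{X}_e\LR{X})$ and computing their kernels directly from the congruence formula in Theorem~\ref{thm:import}(3). This buys two things: you never need the regularity of the automorphism group (your kernel computation for $\mu$ shows in passing that an element of $\D{X}$ fixing $e$ is already the identity, since both conditions unwind to $(i,j)\in\Ker\Theta$), and the ``in particular'' clause is literally the statement $\Ker\pi=S$ rather than a byproduct. The price is that you must supply the normality of $\GD{X}_e\LR{X}$ (which you correctly derive from $\LR{X}$ being normal in $\GD{X}$ and $\GD{X}/\LR{X}$ being abelian) and the lifting of an arbitrary $\alpha\in\LR{X}$ to some $\alpha'\in\LR{Y}$ fixing the last two coordinates; the paper sidesteps both by never leaving $\GD{Y}$. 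The one bookkeeping point you flag yourself is real but closes in one line: the identification $N_1/\LR{Y}_{(0,0,0)}=H$ is proved in the forward direction of Theorem~\ref{thm:import}, whereas the lemma starts from a congruence defined by a triple $(S,H,\Theta)$; for such a congruence, an $\alpha'\in\LR{Y}$ with $\alpha'((0,0,0))=(a,0,0)$ satisfies $\Phi_{\mathcal{GD}}(\alpha')(e)=e$ if and only if $(a,0,0)\asymp(0,0,0)$, i.e.\ $a\in H$, which is exactly the required identification.
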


\begin{proof}
The induced homomorphism $\Phi_{\mathcal{GD}}\colon \GD{Y}\to \GD{X}$ is onto and clearly $\mathcal{LR}(X)=\Phi_{\mathcal{GD}}(N)$, $\mathcal{LR}(X)_e=\Phi_{\mathcal{GD}}(N_1)$,
$\mathcal{D}(X)=\Phi_{\mathcal{GD}}(K)$, $\mathcal{D}(X)_e=\Phi_{\mathcal{GD}}(K_1)$ and $\mathcal{GD}(X)_e\mathcal{LR}(X)=\Phi_{\mathcal{GD}}(WN)$, for subgroups $N$, $N_1$, $K$ and $K_1$ defined in the proof of Theorem \ref{thm:import}.

According to the 2nd Isomorphism Theorem $\mathcal{GD}(Y)/WN =KN/WN\cong K/WN\cap K=K/K_0$.
Further, by 
Proposition~\ref{prop:aut-regular}, the automorphism group of the solution $(X,\sigma,\tau)$ is regular and hence $\D{X}_e$ is trivial. Moreover, $(i,j)\in \Ker\Theta$ if and only if $U_Y^{-i}T_Y^{-j}\in K_1$, which finishes the proof.
\end{proof}

\begin{proposition}\label{prop:order}
    $|X|=[\Z_n^2:S]\cdot[G:H]$.
\end{proposition}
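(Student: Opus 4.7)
The plan is to compute $|X|$ by counting equivalence classes of $\asymp$ directly on $Y=G\times\Z_n^2$. I will do this in two steps: first, show that every equivalence class of $\asymp$ has the same cardinality; second, compute the cardinality of the distinguished class of $(0,0,0)$ and divide $|Y|$ by it.

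For the first step, the solution $(Y,\lambda,\rho)=\mathcal{S}(G\times\Z_n^2,\mathbf{c},\mathbf{d})$ is indecomposable by Theorem~\ref{th:main2}, so $\mathcal{G}(Y)$ acts transitively on $Y$. On the other hand, $\asymp$ is preserved by every $\lambda_y$ and $\rho_y$ (and their inverses) by the very definition of a congruence, hence $\mathcal{G}(Y)$ permutes the $\asymp$-classes. Given two elements $y_1,y_2\in Y$, any $\varphi\in\mathcal{G}(Y)$ with $\varphi(y_1)=y_2$ restricts to a bijection between the class of $y_1$ and the class of $y_2$. Therefore all classes of $\asymp$ have equal cardinality.

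For the second step, I use the explicit description of $\asymp$ given by Theorem~\ref{thm:import}: the class of $(0,0,0)$ consists of those $(a,i,i')\in G\times\Z_n^2$ with $(i,i')\in S$ and $a+H=\Theta((i,i'))$. There are exactly $|S|$ admissible pairs $(i,i')$, and for each one the condition $a+H=\Theta((i,i'))$ picks out a single coset of $H$ in $G$, containing $|H|$ elements. Hence the class of $(0,0,0)$ has size $|S|\cdot|H|$.

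Combining these two observations gives
\[
|X|=\frac{|Y|}{|S|\cdot|H|}=\frac{n^{2}\cdot|G|}{|S|\cdot|H|}=[\Z_n^2:S]\cdot[G:H],
\]
with the same formula valid in the infinite case via cardinal arithmetic (and interpreting the indices accordingly). No step is genuinely difficult; the only mildly delicate point is the justification that all $\asymp$-classes have the same cardinality, which is exactly the role played by indecomposability of $(Y,\lambda,\rho)$. As a sanity check one can rederive the same formula group-theoretically: since the automorphism group of $(X,\sigma,\tau)$ is regular by Proposition~\ref{prop:aut-regular}, $|X|=[\mathcal{GD}(X):\mathcal{GD}(X)_e]$, and inserting the intermediate subgroup $\mathcal{GD}(X)_e\mathcal{LR}(X)$ together with Lemma~\ref{lem:7.3} decomposes this index as $[\Z_n^2:S]\cdot[G:H]$.
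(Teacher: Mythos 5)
Your argument is correct for finite $X$ and takes a genuinely different route from the paper. You count $\asymp$-classes upstairs on $Y$: transitivity of $\mathcal{G}(Y)$ (Theorem~\ref{th:main2}) forces all classes to be equinumerous, and the explicit description of $\asymp$ in Theorem~\ref{thm:import} shows the class of $(0,0,0)$ has exactly $|S|\cdot|H|$ elements, whence $|X|=|Y|/(|S|\cdot|H|)$. The paper instead works downstairs on $X$: orbit--stabilizer for the transitive action of $\GD{X}$ gives $|X|=[\GD{X}:\GD{X}_e]$, and this index is split through the intermediate subgroup $\GD{X}_e\LR{X}$ via the second isomorphism theorem and the identifications of Lemma~\ref{lem:7.3}. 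Your approach is more elementary and bypasses Lemma~\ref{lem:7.3} entirely, but the division step is only literally valid when $G$ is finite; for infinite $G$ you should replace ``cardinal arithmetic'' by the observation that $(a,i,i')\mapsto\bigl((i,i')+S,\ a+H-\Theta((i,i'))\bigr)$ (on the fiber over $S$, and its translates elsewhere) induces a bijection between $Y/\mathord{\asymp}$ and $(\Z_n^2/S)\times(G/H)$ --- this uses that $\Theta$ is a homomorphism and is immediate from Theorem~\ref{thm:import}. The paper's index computation avoids any division and works uniformly. One small slip in your closing ``sanity check'' (which is essentially the paper's proof): $|X|=[\GD{X}:\GD{X}_e]$ follows from transitivity of $\GD{X}$ on $X$, not from regularity of the automorphism group; regularity enters only in Lemma~\ref{lem:7.3} to show $\D{X}_e$ is trivial, which is not needed for this index decomposition.
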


\begin{proof}
By Lemma \ref{lem:7.3} and the 2nd Isomorphism Theorem we have:
    \begin{multline*}
        |X| = [\mathcal{GD}(X):\mathcal{GD}(X)_e]
        = [\mathcal{GD}(X):\mathcal{GD}(X)_e\mathcal{LR}(X)]\cdot
        [\mathcal{GD}(X)_e\mathcal{LR}(X):\mathcal{GD}(X)_e]=\\
             [\Z_n^2:S]\cdot
        [\mathcal{LR}(X):\mathcal{GD}(X)_e\cap \mathcal{LR}(X)]=
        [\Z_n^2:S]\cdot
        [\mathcal{LR}(X): \mathcal{LR}(X)_e]=
        [\Z_n^2:S]\cdot[G:H] \qedhere
    \end{multline*}
\end{proof}

\begin{corollary}\label{cor:order_D}
    $|\D{X}|$ divides $|X|$.
\end{corollary}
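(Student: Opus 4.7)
The plan is to combine Lemma~\ref{lem:7.3} with Proposition~\ref{prop:order} via a standard index-counting argument. From Lemma~\ref{lem:7.3} we know that $\mathcal{D}(X)\cong \Z_n^2/\Ker\Theta$, so $|\mathcal{D}(X)|=[\Z_n^2:\Ker\Theta]$. Since $\Ker\Theta\leq S\leq \Z_n^2$, the tower-of-subgroups formula gives
\[
[\Z_n^2:\Ker\Theta]=[\Z_n^2:S]\cdot[S:\Ker\Theta].
\]

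Next, I would apply the First Isomorphism Theorem to the homomorphism $\Theta\colon S\to G/H$, which yields $S/\Ker\Theta\cong \Theta(S)\leq G/H$. Thus $[S:\Ker\Theta]=|\Theta(S)|$ divides $|G/H|=[G:H]$. Combining these two observations,
\[
|\mathcal{D}(X)|=[\Z_n^2:S]\cdot[S:\Ker\Theta]\ \text{ divides }\ [\Z_n^2:S]\cdot[G:H]=|X|,
\]
where the last equality is Proposition~\ref{prop:order}.

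There is no real obstacle here: once Lemma~\ref{lem:7.3} is in hand, the statement is a direct consequence of Lagrange's theorem applied to the chain $\Ker\Theta\leq S\leq\Z_n^2$ together with the fact that $\Theta$ factors through $G/H$. The only thing to double-check is that we are in the setting where $(X,\sigma,\tau)$ is presented as a quotient of some $\mathcal{S}(G\times\Z_n^2,\mathbf{c},\mathbf{d})$ so that Lemma~\ref{lem:7.3} and Proposition~\ref{prop:order} apply; but by Corollary~\ref{prop:canhom} (together with Proposition~\ref{prop:hom-img}) every indecomposable solution of multipermutation level at most~$2$ admits such a presentation, so the corollary is established in full generality.
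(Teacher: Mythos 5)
Your proof is correct and follows essentially the same route as the paper: the paper's own argument is the one-line chain $|X|=[\Z_n^2:S]\cdot|\mathop{\mathrm{Im}}\Theta|\cdot[G/H:\mathop{\mathrm{Im}}\Theta]=|\D{X}|\cdot[G/H:\mathop{\mathrm{Im}}\Theta]$, which is exactly your combination of Lemma~\ref{lem:7.3}, the tower formula for $\Ker\Theta\leq S\leq\Z_n^2$, the First Isomorphism Theorem for $\Theta$, and Proposition~\ref{prop:order}. Your closing remark about the standing presentation of $(X,\sigma,\tau)$ as a quotient of $\mathcal{S}(G\times\Z_n^2,\mathbf{c},\mathbf{d})$ is consistent with the setup the paper fixes just before Lemma~\ref{lem:7.3}.
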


\begin{proof}
    $|X|=|\Z_n^2/S|\cdot|G/H|=
    |\Z_n^2/S|\cdot|\mathop{\mathrm{Im}}\Theta|\cdot|G/H:\mathop{\mathrm{Im}}\Theta|=
    |\Z_n^2/S|\cdot|S/\Ker\Theta|\cdot|G/H:\mathop{\mathrm{Im}}\Theta|$
    $\strut=|\D{X}|\cdot|G/H:\mathop{\mathrm{Im}}\Theta|$.
\end{proof}

\begin{proposition}
    There exist $k_\sigma,k_\tau\in\mathbb{N}\cup\{\infty\}$ such that each $\sigma_x$, for $x\in X$, is a permutation consisting of cycles of length~$k_\sigma$ and each $\tau_x$ is a permutation consisting of cycles of length~$k_\tau$.
\end{proposition}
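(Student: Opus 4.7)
The plan is to compute the cycle length of $\sigma_x$ at an arbitrary point $y\in X$ using the explicit description of $(X,\sigma,\tau)$ as the quotient $Y/{\asymp}$ with $Y=\mathcal{S}(G\times\Z_n^2,\mathbf{c},\mathbf{d})$ and $\asymp$ given by $(H,S,\Theta)$, and to verify that the answer depends on neither $x$ nor $y$. Fix preimages $x=\Phi((a,i,i'))$ and $y=\Phi((b,j,j'))$. Since $\sigma_x\circ\Phi=\Phi\circ\lambda_{(a,i,i')}$, the cycle length of $\sigma_x$ at $y$ is the smallest $k\geq 1$ such that $\lambda_{(a,i,i')}^k((b,j,j'))\asymp(b,j,j')$ (or $\infty$ if no such $k$ exists).

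A straightforward induction on $k$ yields
\[
\lambda_{(a,i,i')}^k((b,j,j'))=\Bigl(b+\sum_{l=0}^{k-1}c_{N-l},\,j+k,\,j'\Bigr),\qquad N:=i-i'+j'-j-1,
\]
so by the description of $\asymp$ in Theorem~\ref{thm:import}, the congruence above is equivalent to the two conditions $(k,0)\in S$ and $\sum_{l=0}^{k-1}c_{N-l}+H=\Theta((k,0))$. The first condition is manifestly independent of the chosen preimages. For the second, the key observation is that once $(k,0)\in S$, condition~(3) of Theorem~\ref{thm:import} applied with $(i,i')=(k,0)$ gives $c_{k+m}+H=c_m+H$ for every $m\in\Z_n$; that is, the coset $c_m+H$ is $k$-periodic in $m$. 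A short telescoping computation
\[
\sum_{l=0}^{k-1}c_{(N+1)-l}-\sum_{l=0}^{k-1}c_{N-l}=c_{N+1}-c_{N+1-k}\in H
\]
then shows that $\sum_{l=0}^{k-1}c_{N-l}+H$ is independent of $N$, hence of the representatives. Consequently both conditions depend solely on the triple $(H,S,\Theta)$, and we set $k_\sigma$ equal to the smallest $k$ satisfying them (or $\infty$ if none does).

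The argument for $\tau$ is entirely symmetric: iterating $\rho_{(a,i,i')}$ produces $\rho_{(a,i,i')}^k((b,j,j'))=(b+\sum_{l=0}^{k-1}d_{M+l},\,j,\,j'+k)$ with $M:=i-i'+j'-j+1$, the relevant condition from $\asymp$ becomes $(0,k)\in S$, and condition~(3) of Theorem~\ref{thm:import} applied to $(0,k)$ again forces $d_m+H$ to be $k$-periodic in $m$, so the analogous telescoping cancels the dependence on $M$ and produces a constant $k_\tau$. The only nontrivial step is the telescoping, which rests on reading condition~(3) precisely as the $k$-periodicity of the cosets $c_m+H$ (respectively $d_m+H$) needed to eliminate the apparent dependence on the shift parameter; this is also exactly where the congruence axioms carry genuine content.
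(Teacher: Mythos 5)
Your proof is correct and takes essentially the same route as the paper's: both compute $\lambda_{(a,i,i')}^k$ explicitly, translate the return condition through the congruence data $(S,H,\Theta)$ of Theorem~\ref{thm:import}, and use the one-step telescoping fact $c_{N+1}-c_{N+1-k}\in H$ (a consequence of $(k,0)\in S$ via condition (3)) to show that all sums of $k$ consecutive $c_i$ agree modulo $H$. The only cosmetic difference is that you characterize the return time at an arbitrary pair directly, whereas the paper first fixes the base point $\Phi((0,0,0))$ and then propagates to arbitrary elements.
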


\begin{proof}
    Let $x,e\in X$ be taken arbitrary and let $k$ be the least positive integer such that $\sigma_x^k(e)=e$ if such a number exists.
    Suppose $\Phi((0,0,0))=e$ and $\Phi((a,i,j))=x$ for some $(a,i,j)\in G\times \Z_n^2$. Then 
    \begin{align*}
\Phi((\sum_{\ell=1}^k c_{i-j-\ell},k,0))=\Phi(\lambda^k_{(a,i,j)}((0,0,0)))=\Phi_{\mathcal{GD}}
(\lambda^k_{(a,i,j)})\Phi((0,0,0))=\sigma^k_x\Phi((0,0,0))=\Phi((0,0,0))
    \end{align*} 
    and $(0,0,0)\asymp (\sum_{\ell=1}^k c_{i-j-\ell},k,0)$.
    This means $(k,0)\in S$ and $\Theta((k,0))=\sum_{\ell=1}^k c_{i-j-\ell}+H$.

    We now prove, by an induction on~$m$, that
    $\Theta((k,0))=\sum_{\ell=1}^k c_{m+i-j-\ell}+H$,
    for any $m\in\Z$. The claim is trivial for~$m=0$. Let us now suppose that it holds for
    some $m\geq 0$.
    Then
    \begin{multline*}
    \sum_{\ell=1}^k c_{m+1+i-j-\ell}+H
    =c_{m+i-j}+\sum_{\ell=2}^{k} c_{m+i-j-(\ell-1)}+H=  \\
     =c_{m+i-j}+\sum_{\ell=1}^{k-1} c_{m+i-j-\ell}+c_{m-k+i-j}-c_{m-k+i-j}+H=  \\
     =c_{m+i-j}+\sum_{\ell=1}^{k} c_{m+i-j-\ell}-c_{m-k+i-j}+H=\Theta((k,0))
         \end{multline*}
         since  $c_{-k+m+i-j}-c_{m+i-j}\in H$
         because $(k,0)\in S$.
    Therefore the sum of any $k$ consecutive $c_i$ is the image of~$\Theta(k,0)$, modulo~$H$.
    
    Let now $(b,r,s), (b',r',s')\in G\times \Z_n^2$ be arbitrary elements.
    Then $\lambda_{(b,r',s')}^k((b,r,s))=(b+\sum_{\ell=1}^k c_{r'-r-s'+s-\ell},r+k,s)$ and
    $(r+k,s)-(r,s)\in S$. Further, 
    \[
    b+\sum_{\ell=1}^k c_{r'-r-s'+s-\ell}-b+H=
    \sum_{\ell=1}^k c_{r'-r-s'+s-\ell}+H=\Theta((k,0))=\Theta((r+k,s)-(r,s)).
    \]
    This means that $(b,r,s)\asymp(b+\sum_{\ell=1}^k c_{r'-r-s'+s-\ell},r+k,s)$ and hence $(\Phi_{\mathcal{GD}}(\lambda_{(b',r',s')}))^k \Phi((b,r,s))=\Phi((b,r,s))$. The proof for $\tau$ is analogous.
\end{proof}

\begin{proposition}
    A solution $(X,\sigma,\tau)$ is involutive if and only if $(1,1)\in S$, $\Theta((1,1))=H$ and $c_i+d_i\in H$, for each~$i\in\Z_n$.
\end{proposition}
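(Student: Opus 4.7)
My plan is to translate the involutivity condition, as reformulated in Lemma~\ref{lem:inv}, across the quotient map $\Phi: Y \to Y/\mathord{\asymp} = X$ and read off the resulting constraints from the characterization of $\asymp$ given in Theorem~\ref{thm:import}. Recall that by Lemma~\ref{lem:inv}, $(X,\sigma,\tau)$ is involutive if and only if $U_X T_X = \mathrm{id}_X$ and $L_x \mathbf{R}_x = \mathrm{id}_X$ for every $x \in X$ (the latter being a restatement of $(\sigma_x U_X)^{-1} = \tau_x T_X$, since $L_x = \sigma_x U_X$ and $\mathbf{R}_x = \tau_x T_X$). I will handle the two conditions separately and show each corresponds to one of the stated requirements.

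For the first condition, since $\Phi_{\mathcal{GD}}(U_Y T_Y) = U_X T_X$ and $\Phi$ is surjective, $U_X T_X = \mathrm{id}_X$ is equivalent to requiring $\Phi(U_Y T_Y(a,i,i')) = \Phi((a,i,i'))$ for every $(a,i,i') \in G \times \Z_n^2$. Since $U_Y T_Y((a,i,i')) = (a, i-1, i'-1)$, this means $(a,i-1,i'-1) \asymp (a,i,i')$ for all choices of $(a,i,i')$. By Theorem~\ref{thm:import}(3), this is exactly the requirement that $(1,1) \in S$ and $\Theta((1,1)) = 0 + H = H$.

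For the second condition, I will use the explicit formulas from Section~\ref{sec:constr}: one has
\[
\textit{Ł}_{(a,i,i')}\slR_{(a,i,i')}((b,j,j')) = \textit{Ł}_{(a,i,i')}((b + d_{i-i'+j'-j}, j, j')) = (b + c_{i-i'+j'-j} + d_{i-i'+j'-j}, j, j').
\]
Since $\Phi_{\mathcal{GD}}(\textit{Ł}_{(a,i,i')}\slR_{(a,i,i')}) = L_{\Phi((a,i,i'))} \mathbf{R}_{\Phi((a,i,i'))}$, the condition $L_x \mathbf{R}_x = \mathrm{id}_X$ for all $x$ amounts to
\[
(b + c_{i-i'+j'-j} + d_{i-i'+j'-j}, j, j') \asymp (b, j, j')
\]
for all $(a,i,i'), (b,j,j') \in G \times \Z_n^2$. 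Applying Theorem~\ref{thm:import}(3) once more, the $\Z_n^2$-part gives $(0,0) \in S$ (automatic) and the $G$-part gives $c_k + d_k \in H$ where $k = i - i' + j' - j$. As $i, i', j, j'$ range over $\Z_n$, the index $k$ takes every value in $\Z_n$, so this is precisely the condition that $c_i + d_i \in H$ for each $i \in \Z_n$.

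Combining the two translations yields the claimed equivalence in both directions. I expect no real obstacle here: the proof is a direct verification, since the hard work of characterizing congruences was already done in Theorem~\ref{thm:import}, and Lemma~\ref{lem:inv} reduces involutivity to a condition that lifts cleanly through $\Phi_{\mathcal{GD}}$.
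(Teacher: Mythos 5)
Your proof is correct and follows essentially the same route as the paper: both reduce involutivity via Lemma~\ref{lem:inv} to the two conditions $U_XT_X=\mathrm{id}_X$ and $L_x\mathbf{R}_x=\mathrm{id}_X$, lift them to $Y$ (noting $U_YT_Y((a,i,i'))=(a,i-1,i'-1)$ and $\textit{Ł}_y\slR_y$ adding $c_k+d_k$ with $k$ ranging over all of $\Z_n$), and read off the constraints from the congruence description in Theorem~\ref{thm:import}. The only cosmetic difference is that the paper writes the second condition as $\lambda_{(b,j,j')}U_Y\rho_{(b,j,j')}T_Y$ acting on $(a,i,i')$ rather than your $\textit{Ł}_{(a,i,i')}\slR_{(a,i,i')}$ acting on $(b,j,j')$, which is the same computation with the roles of the points swapped.
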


\begin{proof}
 By Lemma \ref{lem:inv} a solution $(X,\sigma,\tau)$ is involutive if and only if $U_XT_X=\mathrm{id}$ and $\sigma_x U_X\tau_xT_X=\mathrm{id}$, for all~$x\in X$. At first note that for $(a,i,i')\in G\times \Z_n^2$, 
    $U_YT_Y((a,i,i'))=(a,i-1,i'-1)$ and $(a,i,i')\asymp(a,i-1,i'-1)$ if and only if $(1,1)\in S$ and $(a-a)+H=\Theta((1,1))$.
    Then for $(b,j,j')\in G\times \Z_n^2$, 
    \[\lambda_{(b,j,j')} U_Y\rho_{(b,j,j')}T_Y ((a,i,i')) = (a+c_{j-j'-i+i'}+d_{j-j'-i+i'},i,i') \]
    and $(a,i,i')\asymp (a+c_{j-j'-i+i'}+d_{j-j'-i+i'},i,i')$ if and only if $c_{j-j'-i+i'}+d_{j-j'-i+i'}\in H$.
\end{proof}

\begin{proposition}
    A solution $(X,\sigma,\tau)$ is of multipermutation level~$1$ if and only if $G=H$.
\end{proposition}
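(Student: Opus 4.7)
The plan hinges on Lemma~\ref{lem:7.3}, which provides the dictionary $\LR{X}/\LR{X}_e\cong\dis{X}/\dis{X}_e\cong G/H$ between the algebraic data $(G,H)$ describing the congruence $\asymp$ and the intrinsic permutation groups of $(X,\sigma,\tau)$. The strategy is to translate the condition ``multipermutation level~$1$'' into ``$\dis{X}=\{\id_X\}$'' (equivalently, $\LR{X}=\{\id_X\}$) and then read off the equivalence with $G=H$ through this dictionary.

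For the forward implication, I would start from the assumption that all $\sigma_x$ coincide and all $\tau_x$ coincide, which is just the definition of $|\mathrm{Ret}(X,\sigma,\tau)|=1$. This immediately makes every generator $\sigma_x\sigma_y^{-1}$ and $\tau_x\tau_y^{-1}$ of $\dis{X}$ trivial, so $\dis{X}=\{\id_X\}$. Applying Lemma~\ref{lem:7.3} then gives $G/H\cong\dis{X}/\dis{X}_e=\{\id_X\}$, hence $G=H$. This direction is routine.

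For the backward implication, I would begin from $G=H$ and invoke Lemma~\ref{lem:7.3} to get $\LR{X}=\LR{X}_e$, i.e., the whole group $\LR{X}$ stabilises $e$. The crucial step, which I expect to be the main obstacle (though it is short), is to upgrade this pointwise stabilisation at $e$ to global triviality of $\LR{X}$. For this I would combine two facts established earlier in the paper: that $\LR{X}$ is a normal subgroup of $\GD{X}$ (proved in Section~3), and that $\GD{X}$ acts transitively on $X$ (a consequence of indecomposability, since $\mathcal G(X)\leq\GD{X}$). Indeed, writing any $x\in X$ as $x=g(e)$ with $g\in\GD{X}$, and taking $\ell\in\LR{X}$, normality gives $g^{-1}\ell g\in\LR{X}=\LR{X}_e$, so $\ell(x)=g(g^{-1}\ell g)(e)=g(e)=x$. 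Hence $\LR{X}=\{\id_X\}$.

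Once $\LR{X}=\{\id_X\}$ is secured, the conclusion follows immediately from the definitions $L_x=\sigma_xU_X$ and $\mathbf{R}_x=\tau_xT_X$: these equal the identity for every $x\in X$, so $\sigma_x=U_X^{-1}$ and $\tau_x=T_X^{-1}$ are independent of $x$. Therefore $|\mathrm{Ret}(X,\sigma,\tau)|=1$, i.e., the multipermutation level is~$1$, completing the equivalence.
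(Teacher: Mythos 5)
Your proof is correct, but it travels a different road than the paper's. The paper argues entirely inside the explicit model: for the forward direction it applies $\textit{Ł}_{(0,i,0)}$ to $(0,0,0)$, reads off from the description of $\asymp$ in Theorem~\ref{thm:import} that $(0,0,0)\asymp(c_i,0,0)$ forces $c_i\in H$ (likewise $d_i\in H$), and concludes $G=H$ because these elements generate $G$; for the backward direction it notes that $H=G$ identifies all elements with the same $\Z_n^2$-coordinates, so $\textit{Ł}$ and $\slR$ act trivially on the quotient and every $\sigma_x$ equals $U_X^{-1}$. You instead route both directions through the dictionary of Lemma~\ref{lem:7.3}: level $1$ is equivalent to $\dis{X}=\{\id\}$, which corresponds to $G/H$ trivial, and conversely $G=H$ gives $\LR{X}=\LR{X}_e$, which you upgrade to $\LR{X}=\{\id_X\}$ using normality of $\LR{X}$ in $\GD{X}$ and transitivity of $\GD{X}$ (both available for indecomposable $2$-permutational solutions). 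Your stabiliser-to-triviality step is the one genuinely new ingredient relative to the paper, and it is sound. What each approach buys: yours is more conceptual and reusable (it never touches the coordinates of $G\times\Z_n^2$, only the permutation-group invariants), while the paper's is shorter and self-contained within Section~7, needing nothing beyond the explicit congruence description. Both are complete; no gap.
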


\begin{proof}
    Clearly, $\textit{Ł}_{(0,0,0)}((0,0,0))=(0,0,0)$ and $\textit{Ł}_{(0,i,0)}((0,0,0))=(c_i,0,0)$, for all $i\in\Z_n$. Moreover,  $(0,0,0)\asymp (c_i,0,0)$ only if $c_i\in H$. The same argument can be used  to show that also $d_i\in H$.
    Since the group $(G,+,0)$ is generated by~all $c_i$ and $d_i$,
    we obtain $H=G$.

    On the other hand, if $H=G$, then $(a,i,j)\asymp(b,i,j)$ for any $a,b\in G$. Hence, $\textit{Ł}_{(0,0,0)}((a,i,j))=(a+c_{j-i},i,j)\asymp(a,i,j)$ which gives $L_{(0,0,0)/_\asymp}=\mathrm{id}_X=\mathbf{R}_{(0,0,0)/_\asymp}$ and therefore $\sigma_x=U_X^{-1}$ and $\tau_x=T_X^{-1}$, for all $x\in X$.
\end{proof}

\begin{proposition}
    Let $\asymp_1$ and $\asymp_2$ be two congruences of a solution $\mathcal{S}(G\times \Z_n^2,\mathbf{c}, \mathbf{d})$. Then the quotient solution $(G\times\Z_n^2,\mathbf{c},\mathbf{d})/\asymp_1$ is isomorphic to $(G\times\Z_n^2,\mathbf{c},\mathbf{d})/\asymp_2$ if and only if $\mathop{\asymp_1}=\mathop{\asymp}_2$.
\end{proposition}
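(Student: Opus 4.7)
The backward direction is immediate: if $\mathord{\asymp_1}=\mathord{\asymp_2}$, then the two quotient solutions coincide as sets with the same operations, so they are trivially isomorphic. The entire content of the proposition therefore lies in the forward direction.

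For the forward direction, set $(Y,\lambda,\rho):=\mathcal{S}(G\times\Z_n^2,\mathbf{c},\mathbf{d})$ and $(X_i,\sigma_i,\tau_i):=(Y,\lambda,\rho)/\mathord{\asymp_i}$ for $i=1,2$, with natural projections $\Phi_i\colon Y\to X_i$. Both $X_1$ and $X_2$ are homomorphic images of an indecomposable solution of multipermutation level~$2$, hence they are themselves indecomposable of multipermutation level at most~$2$. Suppose we are given an isomorphism $\Psi\colon X_1\to X_2$ of solutions. The goal is to show that then $\ker\Phi_1=\ker\Phi_2$, which is exactly the statement $\mathord{\asymp_1}=\mathord{\asymp_2}$.

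The plan is to modify~$\Psi$ by post-composition with a suitable automorphism of~$X_2$ so that the modified map intertwines the two canonical base points. Set $e_i:=\Phi_i((0,0,0))\in X_i$. By Proposition~\ref{prop:aut-regular}, the automorphism group $\mathrm{Aut}(X_2)$ acts regularly on~$X_2$; in particular it is transitive, so there exists $\alpha\in\mathrm{Aut}(X_2)$ with $\alpha(\Psi(e_1))=e_2$. Then both $\alpha\circ\Psi\circ\Phi_1$ and $\Phi_2$ are surjective homomorphisms $Y\to X_2$ which agree at the element $(0,0,0)\in Y$.

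By Corollary~\ref{corol:semi-regular}, any two homomorphisms between indecomposable solutions of multipermutation level~$2$ that agree on a single point coincide, so $\alpha\circ\Psi\circ\Phi_1=\Phi_2$. Since $\alpha\circ\Psi$ is a bijection, taking kernels gives $\ker\Phi_1=\ker(\alpha\circ\Psi\circ\Phi_1)=\ker\Phi_2$, i.e.\ $\mathord{\asymp_1}=\mathord{\asymp_2}$. The only subtle point is verifying that Corollary~\ref{corol:semi-regular} and Proposition~\ref{prop:aut-regular} actually apply to $X_1,X_2$; this is straightforward since quotients of indecomposable solutions are indecomposable, and the multipermutation level of a quotient cannot exceed that of the original solution. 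No delicate computation is needed beyond this transport-of-basepoint argument.
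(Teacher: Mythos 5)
Your proof is correct, and it takes a genuinely different route from the paper's. Both arguments begin the same way: using Proposition~\ref{prop:aut-regular} to post-compose with an automorphism of the target so that the isomorphism matches the base points $\Phi_1((0,0,0))$ and $\Phi_2((0,0,0))$. From there the paper proceeds computationally: it invokes the classification of congruences from Theorem~\ref{thm:import} in terms of the triples $(S_i,H_i,\Theta_i)$ and verifies, through explicit chains of implications involving the translations $L$, $\mathbf{R}$ and the maps $U_{X_i}$, $T_{X_i}$, that $S_1=S_2$, $H_1=H_2$ and $\Theta_1=\Theta_2$, which forces $\mathord{\asymp_1}=\mathord{\asymp_2}$. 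You instead bypass the triples entirely by appealing to the rigidity statement of Corollary~\ref{corol:semi-regular}: since $\alpha\circ\Psi\circ\Phi_1$ and $\Phi_2$ are homomorphisms out of the one-generated solution $\mathcal{S}(G\times\Z_n^2,\mathbf{c},\mathbf{d})$ agreeing at $(0,0,0)$, they coincide, and taking kernels (legitimate because $\alpha\circ\Psi$ is injective and $\mathord{\asymp_i}=\ker\Phi_i$) finishes the argument. Your version is shorter and more conceptual; the paper's version has the side benefit of exhibiting explicitly that the isomorphism forces equality of the defining data $(S,H,\Theta)$, which is the form in which the classification is actually used later. The only point requiring the small justification you already supply is that Proposition~\ref{prop:aut-regular} and Corollary~\ref{corol:semi-regular} apply to the quotients, i.e.\ that quotients of indecomposable solutions of level~$2$ are again indecomposable of level at most~$2$; the paper makes exactly the same implicit assumption when it normalizes the base point, so this is not a gap peculiar to your argument.
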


\begin{proof}
For $i=1,2$ let congruences $\asymp_i$ be determined by subgroups $S_i\leq\Z_n^2$, $H_i\leq G$ and group homomorphisms $\Theta_i:S_i\to G/H_i$.
    Let $\Phi: X_1\to X_2$ be an isomorphism of the solutions $(G\times\Z_n^2,\mathbf{c},\mathbf{d})/\asymp_1$ and $(G\times\Z_n^2,\mathbf{c},\mathbf{d})/\asymp_2$. Without loss of generality we can assume
    $\Phi((0,0,0)/_{\asymp_1})=(0,0,0)/_{\asymp_2}$
    since the automorphism groups of the solutions are regular.
    By Lemma \ref{lem:7.3}, for each $(i,j)\in \Z_n^2$, we have
    \begin{multline*}
        (i,j)\in S_1 \quad  \Rightarrow \quad
        \exists \alpha\in\LR{X_1}\quad U_{X_1}^iT_{X_1}^j\alpha((0,0,0)/_{\asymp_1})=(0,0,0)/_{\asymp_1}\\
        \Rightarrow\quad
        \Phi(U_{X_1}^iT_{X_1}^j\alpha((0,0,0)/_{\asymp_1}))=\Phi((0,0,0)/_{\asymp_1})\\
        \Rightarrow\quad
        U_{X_2}^iT_{X_2}^j\Phi_{\mathcal{LR}}(\alpha)((0,0,0)/_{\asymp_2})=(0,0,0)/_{\asymp_2}
        \quad \Rightarrow (i,j)\in S_2.
    \end{multline*}
    Let now $g\in G$. Then there exist $r_0,\ldots,r_{n-1},s_0,\ldots,s_{n-1}\in\Z_n$ such that $g=\sum_{k=0}^{n-1} r_kc_k+\sum_{k=0}^{n-1} s_kd_k$. 
    Now 
    \begin{multline*}
        g\in H_1 \quad\Rightarrow\quad
        (0,0,0)\asymp_1(g,0,0)=\prod_{k=0}^{n-1} \textit{Ł}_{1,(0,k,0)}^{r_k} \prod_{k=0}^{n-1} \slR_{1,(0,k,0)}^{s_k} ((0,0,0))\\
        \Rightarrow\quad
        (0,0,0)/_{\asymp_1}= \prod_{k=0}^{n-1} \left({T_{X_1}^k}L_{1,(0,0,0)/_{\asymp_1}} T_{X_1}^{-k}\right)^{r_k} \prod_{k=0}^{n-1} \left({T_{X_1}^k}\mathbf{R}_{1,(0,0,0)/_{\asymp_1}} T_{X_1}^{-k}\right)^{s_k} ((0,0,0)/_{\asymp_1})\\
        \Rightarrow\quad
        \Phi((0,0,0)/_{\asymp_1})=\Phi\left(\prod_{k=0}^{n-1} \left({T_{X_1}^k}L_{1,(0,0,0)/_{\asymp_1}} T_{X_1}^{-k}\right)^{r_k} \prod_{k=0}^{n-1} \left({T_{X_1}^k}\mathbf{R}_{1,(0,0,0)/_{\asymp_1}} T_{X_1}^{-k}\right)^{s_k} ((0,0,0)/_{\asymp_1})\right)\\
        \Rightarrow\quad
        (0,0,0)/_{\asymp_2}=\prod_{k=0}^{n-1} \left({T_{X_2}^k}L_{2,(0,0,0)/_{\asymp_2}} T_{X_2}^{-k}\right)^{r_k} \prod_{k=0}^{n-1} \left({T_{X_2}^k}\mathbf{R}_{2,(0,0,0)/_{\asymp_2}} T_{X_2}^{-k}\right)^{s_k} ((0,0,0)/_{\asymp_2})\\
        \Rightarrow \quad (0,0,0)\asymp_2\prod_{k=0}^{n-1} \textit{Ł}_{2,(0,k,0)}^{r_k} \prod_{k=0}^{n-1} \slR_{2,(0,k,0)}^{s_k} ((0,0,0))=(g,0,0)\;\Rightarrow\; (0,0,0)\asymp_2 (g,0,0)
        \;\Rightarrow\; g\in H_2.
    \end{multline*}
    Hence $S_1=S_2$ and $H_1=H_2$.

    Finally, let $(i,j)\in S_1$ and $g\in G$ be presented as before. Then
    \begin{multline*}
        g+H_1= \Theta_1((i,j)) \quad\Rightarrow\quad
        (0,0,0) \asymp_1(g,i,j)\\
        \Rightarrow \quad
        (0,0,0)\asymp_1 U_{X_1}^{-i}T_{X_1}^{-j}
        \prod_{k=0}^{n-1} \textit{Ł}_{1,(0,k,0)}^{r_k} \prod_{k=0}^{n-1} \slR_{1,(0,k,0)}^{s_k} ((0,0,0))\\
        \Rightarrow \quad
        (0,0,0)/_{\asymp_1}= U_{X_1}^{-i}T_{X_1}^{-j}
        \prod_{k=0}^{n-1} \left({T_{X_1}^k}L_{1,(0,0,0)/_{\asymp_1}} T_{X_1}^{-k}\right)^{r_k}
        \prod_{k=0}^{n-1} \left({T_{X_1}^k}\mathbf{R}_{1,(0,0,0)/_{\asymp_1}} T_{X_1}^{-k}\right)^{s_k} ((0,0,0)/_{\asymp_1})\\
        \Rightarrow
        \Phi((0,0,0)/_{\asymp_1})= \Phi\left(U_{X_1}^{-i}T_{X_1}^{-j}
        \prod_{k=0}^{n-1} \left({T_{X_1}^k}L_{1,(0,0,0)/_{\asymp_1}}
        T_{X_1^{-k}}\right)^{r_k}
        \prod_{k=0}^{n-1} \left({T_{X_1}^k}\mathbf{R}_{1,(0,0,0)/_{\asymp_1}} T_{X_1}^{-k}\right)^{s_k} ((0,0,0)/_{\asymp_1})\right)\\
        \Rightarrow \quad
        (0,0,0)/_{\asymp_2}= U_{X_2}^{-i}T_{X_2}^{-j}
        \prod_{k=0}^{n-1} \left({T_{X_2}^k}L_{2,(0,0,0)/_{\asymp_2}} T_{X_2}^{-k}\right)^{r_k}
        \prod_{k=0}^{n-1} \left({T_{X_2}^k}\mathbf{R}_{2,(0,0,0)/_{\asymp_2}} T_{X_2}^{-k}\right)^{s_k} ((0,0,0)/_{\asymp_2})\\
        \Rightarrow \quad
        (0,0,0)\asymp_2 U_{X_2}^{-i}T_{X_2}^{-j}
        \prod_{k=0}^{n-1} \textit{Ł}_{2,(0,k,0)}^{r_k} \prod_{k=0}^{n-1} \slR_{2,(0,k,0)}^{s_k} ((0,0,0)) \\
        \Rightarrow\quad (0,0,0)\asymp_2 (g,i,j)
        \quad\Rightarrow \quad g+H_1=\Theta_2((i,j)).
    \end{multline*}
    Hence $\Theta_1=\Theta_2$. The other implication is trivial.
\end{proof}


We have now a tool how to find (and possibly enumerate) all indecomposable solutions of multipermutation level~$2$. Suppose that a solution $(X,\sigma,\tau)$ has a finite size~$|X|$. We know that it can be obtained as a homomorphic image of the universal solution. But there exists a much smaller solution from which we can construct our solution~$(X,\sigma,\tau)$, even without knowledge of the permutation groups acting on~$X$.

\begin{proposition}\label{prop:7.10}
Let $(X,\sigma,\tau)$ be a finite indecomposable solution of multipermutation level~$2$ of 
size~$n$.
Then $(X,\sigma,\tau)$ is a homomorphic image of~$\mathcal{S}(\Z_n^{2n-2}\times\Z_n^2,\mathbf{c},\mathbf{d})$ with the series 
${\bf c}=(c_i)_{i\in\Z_n}$ and ${\bf d}=(d_i)_{i\in\Z_n}$ defined analogously as in Definition~\ref{def:universal}. 
\end{proposition}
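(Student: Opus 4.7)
The plan is to apply Proposition~\ref{prop:hom-img} with $G=\Z_n^{2n-2}$, viewed as the free $\Z_n$-module on a basis $\{e_1,\ldots,e_{n-1},f_1,\ldots,f_{n-1}\}$, and with the series defined by $c_0=d_0=0$, $c_i=\sum_{k=1}^i e_k$ and $d_i=\sum_{k=1}^i f_k$ for $i\in\{1,\ldots,n-1\}$. Since $e_k=c_k-c_{k-1}$ and $f_k=d_k-d_{k-1}$, the group $G$ is generated by $\{c_i,d_i\mid i\in\Z_n\}$, so the hypotheses of Theorem~\ref{th:main2} are satisfied. Conditions (1) and (2) of Proposition~\ref{prop:hom-img} hold by assumption; for (3) and (4), Proposition~\ref{prop:aut-regular} ensures that the automorphism group of $(X,\sigma,\tau)$ is regular of order $n$, and since $U_X$ and $T_X$ are automorphisms of $(X,\sigma,\tau)$, their orders divide $n$.

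For condition (5), fix $e=\tilde 0\in X$ and define $\varphi\colon \Z_n^{2n-2}\to \dis{X}/\dis{X}_{\tilde 0}$ on the basis by
\[
\varphi(e_k)=\sigma_{\tilde k}\sigma_{\widetilde{k-1}}^{-1}\dis{X}_{\tilde 0},\qquad \varphi(f_k)=\tau_{\tilde k}\tau_{\widetilde{k-1}}^{-1}\dis{X}_{\tilde 0},\qquad k=1,\ldots,n-1.
\]
The extension to a homomorphism on $\Z_n^{2n-2}$ is well-defined provided each generator's image has order dividing $n$. This follows because $\dis{X}$ is normal in $\GD{X}$, which acts transitively on $X$: all $\dis{X}$-orbits therefore have the same cardinality, which must divide $|X|=n$; hence $[\dis{X}:\dis{X}_{\tilde 0}]\mid n$ and, by Lagrange, every element of $\dis{X}/\dis{X}_{\tilde 0}$ has order dividing $n$.

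Since $\LR{X}$ is abelian by Corollary~\ref{corol:LR-abelian}, so is $\dis{X}/\dis{X}_{\tilde 0}$, and telescoping gives $\varphi(c_i)=\sigma_{\tilde i}\sigma_{\tilde 0}^{-1}\dis{X}_{\tilde 0}$. Iterating \eqref{eq:sigma_UT} yields $\sigma_{T^i(\tilde 0)}=\sigma_{U_X^{-i}(\tilde 0)}$, so the compatibility condition $\varphi(c_i)=\sigma_{U_X^{-i}(e)}\sigma_e^{-1}\dis{X}_e$ holds, and the analogous identity for $d_i$ is proved in the same way. Surjectivity of $\varphi$ follows from Proposition~\ref{prop:gen}(ii): the cosets $\sigma_{\tilde i}\sigma_{\tilde 0}^{-1}\dis{X}_{\tilde 0}$ and $\tau_{\tilde i}\tau_{\tilde 0}^{-1}\dis{X}_{\tilde 0}$ for $i\in\{0,\ldots,n-1\}$ all lie in $\mathrm{Im}(\varphi)$, and since $T_X^n=U_X^n=\mathrm{id}$, the indices $i\in\Z$ outside this range give no new generators. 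The claim then follows from Proposition~\ref{prop:hom-img}.

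The only step requiring any thought is the divisibility $[\dis{X}:\dis{X}_{\tilde 0}]\mid n$, which ensures $\varphi$ descends to a homomorphism of $\Z_n$-modules; the remaining verifications are essentially formal once the correct construction is in place.
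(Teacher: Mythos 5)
Your proof is correct, but it takes a genuinely different route from the paper. The paper's proof is ``top--down'': it realizes $(X,\sigma,\tau)$ as a quotient of the universal solution, invokes Theorem~\ref{thm:import} to encode the corresponding congruence as a triple $(S,H,\Theta)$ on $\Z^2$ and $\bigoplus_\Z\Z$, uses Proposition~\ref{prop:order} to conclude $n\cdot\Z^2\leq S$, $n\cdot G\leq H$ and $n\cdot S\leq\Ker\Theta$, and then descends the congruence to $\Z_n^{2n-2}\times\Z_n^2$. You instead argue ``bottom--up'', verifying conditions (1)--(5) of Proposition~\ref{prop:hom-img} directly for $G=\Z_n^{2n-2}$; the two divisibility facts you need --- $o(U_X),o(T_X)\mid n$ via the regularity of the automorphism group (Proposition~\ref{prop:aut-regular}) together with Lagrange, and $[\dis{X}:\dis{X}_{\tilde 0}]\mid n$ via normality of $\dis{X}$ in the transitive group $\GD{X}$ (so its orbits form a block system of equal size) --- are both sound and replace the paper's use of Proposition~\ref{prop:order}. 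Your route is more self-contained, avoiding the congruence machinery of Section~7 entirely; the paper's route has the advantage of producing the explicit descended congruence data $(S',H',\Theta')$, which is what justifies the refinement stated right after the proposition (replacing $2n-2$ by $2\cdot[\Z_n^2:S]-2$). One small bookkeeping caveat: your phrase ``the analogous identity for $d_i$ is proved in the same way'' hides an asymmetry, since \eqref{eq:sigma_UT} turns $\sigma_{T^i(\tilde 0)}$ into $\sigma_{U^{-i}(\tilde 0)}$ as required by condition (5), whereas for $\tau$ condition (5) asks for $\tau_{T_X^{-i}(e)}$ while your $\varphi(d_i)$ produces $\tau_{T_X^{i}(\tilde 0)}\tau_{\tilde 0}^{-1}\dis{X}_{\tilde 0}$; this is resolved by re-indexing the $f_k$ (using $T^{-k}(\tilde 0)$ in place of $\tilde k$), and the same implicit convention appears in the paper's own Proposition~\ref{prop:img-universal} and in the definition of the canonical solution, so it is not a gap in your argument.
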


\begin{proof}
    Let $(Y,\lambda,\rho)$ be the universal solution
    and let $\asymp$ be a congruence such that
    $(Y,\lambda,\rho)/_{\asymp}\cong(X,\sigma,\tau)$.
    This congruence is determined by a certain triple: a subgroup  $S$ of $\Z^2$, a subgroup $H$ of the free abelian group $G=\bigoplus_\Z\Z$ and a homomorphism $\Theta\colon S\to G/H$.
    According to Proposition ~\ref{prop:order}, the orders of the groups $\Z^2/S$ and $G/H$ divide~$n$ and hence $n\cdot \Z^2\leq S$ and $n\cdot G\leq H$. We have also $n\cdot S\leq\Ker\Theta$
    and therefore $c_{i+n}-c_i\in H$ and $d_{i+n}-d_i\in H$, for any $i\in\Z$. Further, let $J=\langle c_1,\ldots,c_{n-1},d_1,\ldots,d_{n-1}\rangle$ and 
    $K=J\cap H$. Obviously, by the 2nd Isomorphism Theorem,  $J/K\cong JH/H=G/H$ and $n\cdot J\leq K$.

    Let now consider the solution $\mathcal{S}(\Z_n^{2n-2}\times\Z_n^2,\mathbf{c},\mathbf{d})$ and let us take $S'=S/(n\cdot \Z^2)$, $H'=K/(n\cdot J)$
    and $\Theta'((i,j))=\Theta((i\bmod n,j\bmod n))$.
    It is clear that the quotient solution $\mathcal{S}(\Z_n^{2n-2}\times\Z_n^2,\mathbf{c},\mathbf{d})/\propto$ by the congruence $\propto$ determined by subgroups~$S'$ of $\Z_n^2$ and $H'$ of $\Z_n^{2n-2}$
    and the homomorphism $\Theta'\colon S'\to \Z_n^{2n-2}/H'$, is isomorphic to~$(X,\sigma,\tau)$.
\end{proof}

Looking at the proof of Proposition \ref{prop:7.10}, 
we can actually obtain a stronger result. If we are looking for subgroups $S$ of $\Z_n^2$ and $H$ of $G$ to construct the solution $(Y,\lambda,\rho)$, we can take
$G=\Z_n^{2\cdot [\Z_n^2:S]-2}$ instead 
$G=\Z_n^{2\cdot |X|-2}$, which can simplify our considerations.


\begin{example}
    We shall construct all 19 indecomposable solutions of the multipermutation level~$2$ with having size 4. 
    They can be constructed as homomorphic images of $\mathcal{S}(\Z_4^{2\cdot [\Z_4^2:S]-2}\times\Z_4^2,\mathbf{c},\mathbf{d})$,
    by Proposition~\ref{prop:7.10}.
    Since, by Proposition \ref{prop:order},  $[\Z_4^2:S]\cdot[G:H]=4$, we have three possibilities:
    \begin{itemize}
        \item $[\Z_4^2:S]=4$ and $[G:H]=1$. Then
        $S$ is a subgroup of~$\Z_4^2$ of order 4 and there are 7 such subgroups: 6 cyclic groups $\langle(1,1)\rangle$, $\langle(1,2)\rangle$, $\langle(1,3)\rangle$, $\langle(1,0)\rangle$, $\langle(0,1)\rangle$,
        $\langle(2,1)\rangle$ and one elementary abelian group $2\cdot\Z_4^2$. There is
        only one choice of~$H$, namely $H=G$, and only one choice of~$\Theta$ for each choice of~$S$. Hence we have 7 solutions, all of them of multipermutation level~$1$.
        Only one of the subgroups $S$ contains $(1,1)$ hence only one of the solutions is involutive.
    \item $[\Z_4^2:S]=2$ and $[G:H]=2$. In this case, $G=\Z_4^2$. There are three subgroups of $\Z_4^2$ of size 8. Only one of them is eligible as a subgroup $S$ because $(1,0)\in S$ or $(0,1)\in S$ and both imply $H=G$. Hence let $S=\langle (1,1),(2,0)\rangle$. On the other side, for each subgroup $H$ of $G$,
    we have 4 choices of~$\Theta$, since each of the generators of~$S$ can be sent either onto~$H$ or onto $G\smallsetminus H$. Hence there are $1\cdot 3\cdot 4=12$ different solutions.
    How many of them are involutive?
    All of them satisfy $(1,1)\in S$ but only one of the choices of $H$ satisfies $c_1+d_1\in H$,
    namely $H=\langle(1,1)\rangle$. Moreover,
    only two of choices of $\Theta$ satisfy $\Theta((1,1))=H$ and therefore only two of the solutions are involutive.
    \item $[\Z_4^2:S]=1$ and $[G:H]=4$. This gives $|G|=1$ which contradicts the second condition.
    \end{itemize}
    \end{example}

\end{document}